\documentclass{amsart}
\usepackage{amssymb,amsmath,bbm}

\theoremstyle{plain}
\newtheorem{theorem}{Theorem}[section]
\newtheorem{proposition}[theorem]{Proposition}
\newtheorem{lemma}[theorem]{Lemma}
\newtheorem{corollary}[theorem]{Corollary}
\newtheorem{claim}[theorem]{Claim}

\theoremstyle{definition}
\newtheorem{definition}[theorem]{Definition}
\newtheorem{question}[theorem]{Question}

\theoremstyle{remark}
\newtheorem{remark}[theorem]{Remark}

\DeclareMathOperator{\var}{var}
\DeclareMathOperator{\Var}{Var}

\def\Xint#1{\mathchoice
{\XXint\displaystyle\textstyle{#1}} 
{\XXint\textstyle\scriptstyle{#1}} 
{\XXint\scriptstyle\scriptscriptstyle{#1}} 
{\XXint\scriptscriptstyle\scriptscriptstyle{#1}} 
\!\int}
\def\XXint#1#2#3{{\setbox0=\hbox{$#1{#2#3}{\int}$ }
\vcenter{\hbox{$#2#3$ }}\kern-.6\wd0}}

\def\dashint{\Xint-}

\begin{document}

\title[Variation of the maximal function]{On the variation of the Hardy-Littlewood maximal function}
\author{Ond\v{r}ej Kurka}
\thanks{The author is a junior researcher in the University Centre for Mathematical Modelling, Applied Analysis and Computational Mathematics (MathMAC)}
\address{Department of Mathematical Analysis, Faculty of Mathematics and Physics,
Charles University, Sokolovsk\'a 83, 186 75 Prague 8, Czech Republic}
\email{kurka.ondrej@seznam.cz}
\keywords{Hardy-Littlewood maximal function, function of bounded variation, weak differentiability}
\subjclass[2010]{42B25, 46E35}
\begin{abstract}
We show that a function $ f : \mathbb{R} \rightarrow \mathbb{R} $ of bounded variation satisfies
$$ \Var Mf \leq C \Var f $$
where $ Mf $ is the centered Hardy-Littlewood maximal function of $ f $. Consequently, the operator $ f \mapsto (Mf)' $ is bounded from $ W^{1,1}(\mathbb{R}) $ to $ L^{1}(\mathbb{R}) $. This answers a question of Haj\l asz and Onninen in the one-dimensional case.
\end{abstract}
\maketitle

\section{Introduction and main results}

The centered Hardy-Littlewood maximal function of $ f : \mathbbm{R}^{n} \rightarrow \mathbbm{R} $ is defined by
$$ Mf(x) = \sup _{r > 0} \; \dashint _{B(x,r)} |f(y)| \, dy. $$
J. Kinnunen proved in \cite{kinnunen} that the maximal operator $ f \mapsto Mf $ is bounded in the Sobolev space $ W^{1,p}(\mathbbm{R}^{n}) $ for $ 1 < p \leq \infty $ (see also \cite[Theorem 1]{hajonn}). Since then, regularity properties of maximal functions have been studied by many authors in various settings. J.~Kinnunen and P.~Lindqvist \cite{kinnlind} proved soon that the boundedness is fulfilled also by the local maximal operator. The regularity of the fractional maximal function was studied by J.~Kinnunen and E.~Saksman \cite{kinnsaks}. It was shown further by P.~Haj\l asz and J.~Onninen \cite{hajonn} that the local spherical maximal operator is bounded in $ W^{1,p}(\Omega ) $ when $ n/(n-1) < p < \infty $. And last but not least, H.~Luiro \cite{luiro1} generalized the original boundedness result and established the continuity of the centered maximal operator in $ W^{1,p}(\mathbbm{R}^{n}), 1 < p < \infty $. For other related results, see also e.g. \cite{aldperlaz2}, \cite{buckley}, \cite{hajliu1}, \cite{hajliu2}, \cite{korry}. For results considering other concepts than the weak differentiability, see \cite{hajmal}, \cite{luiro2}.

Kinnunen's method depends on the Hardy-Littlewood-Wiener theorem which is available only for $ p > 1 $. The case $ p = 1 $ turns out to be quite different and less approachable than the case $ p > 1 $. Because $ Mf \notin L^{1} $ whenever $ f $ is non-trivial, Kinnunen's result fails for $ p = 1 $. Still, one can ask whether the maximal function of $ f \in W^{1,1} $ belongs locally to $ W^{1,1} $. In~\cite{hajonn}, the authors posed the following question.

\begin{question}[Haj\l asz and Onninen] \label{quhajonn}
Is the operator $ f \mapsto |\nabla Mf| $ bounded from $ W^{1,1}(\mathbbm{R}^{n}) $ to $ L^{1}(\mathbbm{R}^{n}) $?
\end{question}

In the present work, we show that the answer is positive for $ n = 1 $. The question had been already answered positively in the non-centered one-dimensional case by H.~Tanaka \cite{tanaka}. This result was sharpened later by J.~M.~Aldaz and J.~P\'erez L\'azaro \cite{aldperlaz1} who proved that, for an arbitrary $ f : \mathbbm{R} \rightarrow \mathbbm{R} $ of bounded variation, its non-centered maximal function $ \widetilde{M} f $ is absolutely continuous and
$$ \Var \widetilde{M} f \leq \Var f. $$
We prove that such an inequality holds for the centered maximal function as well.

\begin{theorem} \label{thm}
Let $ f : \mathbbm{R} \rightarrow \mathbbm{R} $ be a function of bounded variation. Then
$$ \Var Mf \leq C \Var f $$
for a universal constant $ C $.
\end{theorem}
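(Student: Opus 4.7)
The plan is to reduce to the case $ f \geq 0 $ (using $ M(|f|) = Mf $ and $ \Var |f| \leq \Var f $) and then to decompose the real line according to the \emph{contact set} $ E = \{ x : Mf(x) = f(x) \} $ and the \emph{free set} $ U = \mathbbm{R} \setminus E $. Since a BV function is bounded, $ Mf $ is continuous, so $ E $ is closed and $ U $ is a disjoint union of open intervals $ (a_{i}, b_{i}) $. On $ E $ one has $ Mf = f $, while at each $ x \in U $ the supremum defining $ Mf(x) $ is attained at some optimal radius $ r(x) > 0 $, giving $ Mf(x) = \dashint_{B(x, r(x))} f $.

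For any partition appearing in the definition of $ \Var Mf $, insert auxiliary points from $ \overline{E} $ between any two consecutive $ x_{k}, x_{k+1} $ whose interval meets $ E $; after this refinement each consecutive pair is either contained in $ E $ or contained in a single component of $ U $. The pairs in $ E $ contribute at most $ \Var f $ since $ Mf = f $ there, so the task reduces to establishing
$$ \sum_{i} \Var_{(a_{i}, b_{i})} Mf \leq C \Var f. $$

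On a single component $ (a_{i}, b_{i}) $, pick a measurable selection $ x \mapsto r(x) $ of optimal radii. The first-order optimality condition $ f(x + r(x)) + f(x - r(x)) = 2 Mf(x) $ combined with the chain rule yields the formal derivative
$$ (Mf)'(x) = \frac{f(x + r(x)) - f(x - r(x))}{2 r(x)}, $$
so controlling $ \Var Mf $ inside the free set becomes a geometric covering problem for the balls $ B(x, r(x)) $. The main obstacle, and the essential difference from the non-centered case of Aldaz and P\'erez L\'azaro, is that $ Mf $ need not be monotone on a component of $ U $, and $ r(x) $ may be discontinuous, with the balls $ B(x, r(x)) $ extending well outside $ (a_{i}, b_{i}) $. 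My approach would be to split each component according to the size of $ r(x) $ relative to $ \operatorname{dist}(x, E) $, and on each piece either to dominate $ |(Mf)'(x)| $ by the variation of $ f $ across nearby contact points or, via a Vitali-type selection inside $ \{ B(x, r(x)) \} $, to bound it by the variation of $ f $ with bounded overlap. Ensuring that this overlap contributes only a universal constant $ C $, uniformly across all components of $ U $, is where I expect the core difficulty of the theorem to lie.
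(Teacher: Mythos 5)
Your proposal is a programme rather than a proof, and the step you defer --- showing that the contributions of the balls $B(x,r(x))$ can be summed with only a universal constant's worth of overcounting --- is precisely the content of the theorem. In the paper this is the key lemma (Lemma~\ref{keylemma}), and the point emphasized there is that a Vitali-type selection with bounded overlap cannot work as stated: the relevant intervals coming from different points of the free set live at wildly different scales, and a single point can lie in arbitrarily many of them unless their lengths are comparable. The paper's proof therefore has to organize the structures $s<u<v<t$ (produced from ``essential peaks'' of $Mf$ via Lemma~\ref{lemmsuvt}) into dyadic scale classes $L_n=2^{-n}L_0$ and residue classes of $k$ modulo $200$, and then run an inductive construction across scales (Propositions~\ref{propA} and \ref{propB}) showing that each structure still makes its own contribution to $\Var f$ despite unbounded overlap. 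Nothing in your outline addresses this multiscale interaction, so the inequality $\sum_i \Var_{(a_i,b_i)} Mf \leq C\Var f$ remains entirely open in your argument; you have correctly located the difficulty but not overcome it.

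Several of the preliminary reductions are also false for general BV $f$. $Mf$ need not be continuous (take $f=\chi_{[0,\infty)}$: then $Mf=\tfrac12$ on $(-\infty,0]$ and $Mf=1$ on $(0,\infty)$), so $E=\{Mf=f\}$ need not be closed and your partition-refinement step breaks down; the paper explicitly notes this lack of continuity and never uses a contact-set decomposition. For the same example the supremum defining $Mf(x)$ is not attained for $x<0$, so an ``optimal radius'' $r(x)$ need not exist, and the first-order condition $f(x+r(x))+f(x-r(x))=2Mf(x)$ together with the formula $(Mf)'(x)=\frac{f(x+r(x))-f(x-r(x))}{2r(x)}$ presupposes pointwise regularity of $f$ and of $x\mapsto r(x)$ that BV functions do not have; at best these are heuristics that would have to be replaced by averaged statements such as Lemma~\ref{lemm0} of the paper. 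So both the foundation of the decomposition and the core covering estimate need genuinely new arguments before this outline could become a proof.
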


Question~\ref{quhajonn} and the validity of Theorem \ref{thm} were already studied in the discrete setting in \cite{bchp}. In the present paper, we do not care how small the constant $ C $ may be. It is a plausible hypothesis that the inequality holds for $ C = 1 $, in the same way as in the non-centered case (see also \cite[Question B]{bchp}).

Once Theorem \ref{thm} is proven, it is not difficult to derive the weak differentiability of $ Mf $. Note that $ Mf $ needs not to be continuous for an $ f $ of bounded variation, and so $ M $ does not possess such strong regularity properties as $ \widetilde{M} $. Anyway, for a weakly differentiable $ f $, everything is all right.

We prove two consequences of Theorem~\ref{thm} concerning the regularity of the maximal function. Although the first corollary is essentially an auxiliary result, it says something more than we need and one may found it interesting itself.

\begin{corollary} \label{corlocac}
Let $ f : \mathbbm{R} \rightarrow \mathbbm{R} $ be a measurable function with $ Mf \not\equiv \infty $. If $ f $ is locally absolutely continuous on an open set $ U $, then $ Mf $ is also locally absolutely continuous on $ U $.
\end{corollary}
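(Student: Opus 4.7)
The plan is to fix an arbitrary compact interval $[a,b] \subset U$ and prove that $Mf$ is absolutely continuous on $[a,b]$. Pick $\delta > 0$ with $[a - 2\delta, b + 2\delta] \subset U$ and a smooth cutoff $\eta$ with $\eta \equiv 1$ on $[a - \delta, b + \delta]$ and $\operatorname{supp} \eta \subset [a - 2\delta, b + 2\delta]$; set $\tilde f = \eta f$, which is absolutely continuous on $\mathbb{R}$ with compact support, hence of bounded variation. Define also $L(x) := \sup_{r > \delta} \dashint_{B(x,r)} |f|$. Then for $x \in [a,b]$ one has
$$ Mf(x) = \max\bigl(M\tilde f(x),\, L(x)\bigr), $$
because for $r \leq \delta$ and $x \in [a, b]$ the ball $B(x,r)$ lies in $[a - \delta, b + \delta]$ where $\tilde f = f$, so $\dashint_{B(x,r)} |f| = \dashint_{B(x,r)} |\tilde f| \leq M\tilde f(x)$; conversely $M\tilde f \leq Mf$ (as $|\tilde f| \leq |f|$) and $L \leq Mf$ by definition. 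Since the pointwise maximum of two absolutely continuous functions on $[a,b]$ is absolutely continuous, the task reduces to showing that $L$ and $M\tilde f$ are each absolutely continuous on $[a, b]$.

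For $L$ the hypothesis $Mf \not\equiv \infty$ furnishes a point $x_0$ with $Mf(x_0) < \infty$, and the inclusion $B(x, r) \subset B(x_0, r + |x - x_0|)$ combined with $r > \delta$ yields $\dashint_{B(x, r)} |f| \leq (1 + |x - x_0|/\delta) \, Mf(x_0)$, bounding $L$ on $[a,b]$ by a constant $K$. The same ball-inclusion comparison applied to two points $x, y \in [a, b]$ gives $|L(x) - L(y)| \leq (K / \delta)|x - y|$, so $L$ is Lipschitz on $[a,b]$.

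It remains to show that $M\tilde f$ is absolutely continuous. Theorem~\ref{thm} supplies $\Var M\tilde f \leq C \Var \tilde f < \infty$, so $M\tilde f$ is of bounded variation; the issue, and what I expect to be the main obstacle, is upgrading bounded variation to absolute continuity for $M\tilde f$ when $\tilde f$ is itself absolutely continuous. My approach is mollification: set $\tilde f_\varepsilon = \tilde f \ast \varphi_\varepsilon$. Each $\tilde f_\varepsilon$ is Lipschitz with compact support, and the translation estimate $\bigl|\dashint_{B(x,r)} |\tilde f_\varepsilon| - \dashint_{B(y,r)} |\tilde f_\varepsilon|\bigr| \leq \operatorname{Lip}(\tilde f_\varepsilon)|x - y|$, uniform in $r$, makes $M \tilde f_\varepsilon$ Lipschitz and so absolutely continuous. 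Theorem~\ref{thm} provides the uniform bound $\|(M\tilde f_\varepsilon)'\|_{L^1} \leq C \Var \tilde f$, while $\|\tilde f_\varepsilon - \tilde f\|_\infty \to 0$ yields $M\tilde f_\varepsilon \to M\tilde f$ uniformly. A Dunford--Pettis / equi-integrability argument for $(M\tilde f_\varepsilon)'$ should then transfer absolute continuity to the limit $M\tilde f$; verifying the required uniform integrability, presumably through a pointwise bound of the form $|(M\tilde f_\varepsilon)'| \lesssim M(\tilde f_\varepsilon')$ to be extracted from the proof of Theorem~\ref{thm}, is the delicate point.
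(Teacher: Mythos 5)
Your reduction is correct and matches the paper's strategy up to the last step: the identity $Mf=\max\{M\tilde f,L\}$ on $[a,b]$ and the local Lipschitz estimate for the truncated operator $L=M_{\geq\delta}f$ are precisely the splitting and Lemma~\ref{lemmlipsch} used in the paper (which truncates $f$ sharply rather than with a cutoff; this changes nothing). The genuine gap is the final step, upgrading $M\tilde f$ from bounded variation to absolute continuity. Uniform convergence $M\tilde f_\varepsilon\to M\tilde f$ together with the uniform bound $\Vert (M\tilde f_\varepsilon)'\Vert_1\leq C\Var\tilde f$ only yields that the limit is of bounded variation (mollified step functions show this), so everything hinges on the equi-integrability of $\{(M\tilde f_\varepsilon)'\}$, which you leave unverified, and the route you propose for it cannot work in general. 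The pointwise bound $|(M\tilde f_\varepsilon)'|\leq M(\tilde f_\varepsilon')$ is indeed available (Kinnunen's argument, since $\tilde f_\varepsilon$ is Lipschitz with compact support), but the Hardy--Littlewood operator does not map $L^1$ into $L^1_{\mathrm{loc}}$: if $\tilde f'$ is integrable but not locally in $L\log L$ (e.g.\ $\tilde f'(x)\sim |x-c|^{-1}\log^{-2}|x-c|$ near a point $c\in(a,b)$, which is compatible with $\tilde f$ absolutely continuous), then $M(\tilde f')\notin L^1([a,b])$; moreover, since $\dashint_{x-r}^{x+r}|\tilde f_\varepsilon'|\to\dashint_{x-r}^{x+r}|\tilde f'|$ for each fixed $r$, Fatou's lemma gives $\int_a^b M(\tilde f_\varepsilon')\to\infty$, so the proposed dominating family is not uniformly integrable. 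Equi-integrability of $(M\tilde f_\varepsilon)'$ itself is essentially the uniform absolute continuity you are trying to prove, and Theorem~\ref{thm} supplies only total-variation information, which does not localize: the maximal operator is non-local, so the variation of $Mf$ on a short interval is not controlled by the variation of $f$ nearby.

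The paper closes exactly this gap by a different mechanism, avoiding any compactness argument: it shows that $Mf$ is continuous on $U$ (Lemma~\ref{lemmcont}) and satisfies Lusin's condition $(N)$ on $U$ (Lemma~\ref{lemmluzin}, writing the set where $Mf>|f|$ as the countable union of the sets $E_{1/k}$ on which $Mf$ coincides with the locally Lipschitz functions $M_{\geq 1/k}f$, and using that $f$ has $(N)$), and then combines these two facts with the bounded-variation bound coming from Theorem~\ref{thm} via the Banach--Zarecki theorem. Your argument is repaired by replacing the mollification step with this continuity-plus-$(N)$-plus-Banach--Zarecki argument; the first two steps of your proof can stay as they are.
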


The following consequence answers Question~\ref{quhajonn} in the one-dimensional case.

\begin{corollary} \label{corw11}
Let $ f \in W^{1,1}_{\mathrm{loc}}(\mathbbm{R}) $ be such that $ f' \in L^{1}(\mathbbm{R}) $. Then $ Mf $ is weakly differentiable and
$$ \Vert (Mf)' \Vert _{1} \leq C \Vert f' \Vert _{1} $$
for a universal constant $ C $.
\end{corollary}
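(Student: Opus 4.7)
The plan is to combine Theorem~\ref{thm} with Corollary~\ref{corlocac} via the standard correspondence between local absolute continuity, bounded variation, and weak differentiability with $L^{1}$ derivative. The overall structure is: use the hypothesis to control $\Var f$, transfer the control to $\Var Mf$ through Theorem~\ref{thm}, invoke Corollary~\ref{corlocac} to obtain absolute continuity of $Mf$, and finally convert this into the statement about the weak derivative.

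More concretely, I would first fix the locally absolutely continuous representative of $f$. Since $f' \in L^{1}(\mathbb{R})$, this representative is continuous on $\mathbb{R}$ with finite limits at $\pm\infty$, hence bounded, and satisfies $\Var f = \|f'\|_{1} < \infty$. In particular $f \in L^{\infty}(\mathbb{R})$, so $Mf \leq \|f\|_{\infty}$ pointwise, which in particular guarantees $Mf \not\equiv \infty$. Theorem~\ref{thm} then yields
\[
\Var Mf \leq C\,\Var f = C\,\|f'\|_{1} < \infty,
\]
and Corollary~\ref{corlocac}, applied with $U = \mathbb{R}$, gives that $Mf$ is locally absolutely continuous on $\mathbb{R}$. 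A locally absolutely continuous function of finite total variation on $\mathbb{R}$ is absolutely continuous on all of $\mathbb{R}$, and for such a function the a.e.\ classical derivative coincides with the distributional derivative and satisfies $\int_{\mathbb{R}} |(Mf)'|\,dx = \Var Mf$. Putting these together, $Mf$ is weakly differentiable with
\[
\|(Mf)'\|_{1} = \Var Mf \leq C\,\|f'\|_{1},
\]
as required.

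There is essentially no serious obstacle, since Theorem~\ref{thm} and Corollary~\ref{corlocac} already carry the full weight and this statement is a packaging of them. The two small items worth verifying are (a) that the hypothesis $Mf \not\equiv \infty$ of Corollary~\ref{corlocac} is indeed met, which follows from the boundedness of the representative of $f$, and (b) the promotion of local absolute continuity on $\mathbb{R}$ to global absolute continuity in the presence of finite total variation, which is a standard real-analysis fact (a locally absolutely continuous function whose derivative lies in $L^{1}(\mathbb{R})$ is absolutely continuous on $\mathbb{R}$ and its variation equals the $L^{1}$ norm of its derivative).
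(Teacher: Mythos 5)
Your proposal is correct and follows essentially the same route as the paper: pass to the locally absolutely continuous representative of $f$ with $\Var f = \Vert f' \Vert_{1}$, apply Corollary~\ref{corlocac} to get local absolute continuity (hence weak differentiability) of $Mf$, and use Theorem~\ref{thm} to conclude $\Vert (Mf)' \Vert_{1} = \Var Mf \leq C \Var f = C \Vert f' \Vert_{1}$. Your extra checks (that $Mf \not\equiv \infty$ via boundedness of the representative, and that finite variation plus local absolute continuity identifies the weak derivative's $L^{1}$ norm with the variation) are sound and merely make explicit what the paper leaves implicit.
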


We note that the above results hold for the local maximal function as well. In fact, the passage to the local maximal function makes no important difference, as discussed in Remark~\ref{remloc}.

We introduce here one more result which is a modified version of Lemma~\ref{keylemma}, the key ingredient of the proof of Theorem \ref{thm}. We expose the lemma here because we believe that it can be used for finding a solution of the more-dimensional Haj\l asz-Onninen problem.

\begin{lemma} \label{keylemmamod}
Let $ \varrho $ be a positive number. Let $ f : \mathbbm{R} \rightarrow \mathbbm{R} $ be a function of bounded variation and let $ \Lambda _{k}^{n}, n, k \in \mathbbm{Z}, $ be non-negative numbers. Assume that, for every $ (n,k) $ with $ \Lambda _{k}^{n} > 0 $, there are $ s < u < v < t $ such that
$$ (k - \varrho )2^{-n} \leq s, \quad t \leq (k + \varrho )2^{-n}, $$
$$ u - s \geq 2^{-n}, \quad v - u \geq 2^{-n}, \quad t - v \geq 2^{-n} $$
and
$$ \min \{ f(s), f(t) \} - \dashint _{u}^{v} f \geq \Lambda _{k}^{n}. $$
Then
$$ \sum _{n,k} \Lambda _{k}^{n} \leq C \Var f $$
where $ C $ depends only on $ \varrho $.
\end{lemma}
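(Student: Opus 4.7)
The plan is to first pass from the integral condition to a pointwise one, then record the easy single-scale bound, and finally combine across scales by a greedy Vitali-type selection analogous to the strategy behind Lemma~\ref{keylemma}.

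For each $(n,k)$ with $\Lambda_k^n>0$, I pick $w_k^n\in[u_k^n,v_k^n]$ with $f(w_k^n)\leq\dashint_{u_k^n}^{v_k^n}f$; the hypothesis then reads $\min\{f(s_k^n),f(t_k^n)\}-f(w_k^n)\geq\Lambda_k^n$, with $s_k^n<w_k^n<t_k^n$, $w_k^n-s_k^n\geq2^{-n}$, $t_k^n-w_k^n\geq2^{-n}$, and $t_k^n-s_k^n\leq2\varrho\cdot2^{-n}$. In particular $f$ descends by at least $\Lambda_k^n$ on $[s_k^n,w_k^n]$ and ascends by at least $\Lambda_k^n$ on $[w_k^n,t_k^n]$, so $\Var_{[s_k^n,t_k^n]}f\geq2\Lambda_k^n$.

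At each fixed scale $n$, the intervals $[s_k^n,t_k^n]\subset[(k-\varrho)2^{-n},(k+\varrho)2^{-n}]$ have multiplicity of overlap bounded by $\lceil2\varrho\rceil+1$, yielding $\sum_{k}\Lambda_k^n\leq C_\varrho\Var f$; naively summing this over $n\in\mathbb{Z}$ diverges, so a genuine multi-scale argument is needed. My plan is to associate to each pit a canonical sub-interval $J_k^n\subset[s_k^n,w_k^n]$ (or inside $[w_k^n,t_k^n]$) of length comparable to $2^{-n}$ carrying net variation $\gtrsim\Lambda_k^n$, enumerate the pits in order of decreasing amplitude $\Lambda_k^n$, and process them by a greedy Vitali-type selection: upon processing $(n,k)$ I look for a portion of its descent or ascent not yet claimed by previously processed, higher-amplitude pits, and charge the current pit against that fresh portion of variation. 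Since net changes of $f$ on pairwise disjoint intervals sum to at most $\Var f$, such a charging scheme would yield the target bound.

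The main obstacle is the fresh-portion claim itself: a much coarser pit with larger amplitude has a canonical sub-interval of length $\asymp 2^{-n'}\gg 2^{-n}$ which may swallow all of $[s_k^n,t_k^n]$, defeating a naive disjointness argument. The saving structural fact ought to be a hierarchical organisation of the pits --- at each individual coarser scale $n'\leq n$ at most $\lceil2\varrho\rceil+1$ coarser pits can cover a fixed point, so a careful parent--child bookkeeping, together with a geometric loss in the scale separation $n-n'$, should leave each pit with a fresh window of variation to charge. Carrying out this allocation precisely, in the spirit of the proof of Lemma~\ref{keylemma} adapted to the present geometric configuration, is the technical heart of the argument and the step I expect to require the most care.
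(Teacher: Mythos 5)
The reduction in your first two paragraphs is fine but is only the trivial part: choosing $w_k^n$ with $f(w_k^n)\leq\dashint_{u_k^n}^{v_k^n}f$, noting the down--up oscillation of size $\Lambda_k^n$ inside $[s_k^n,t_k^n]$, and the bounded-overlap bound at a single scale. The statement you actually need --- that after ordering the pits by amplitude each one retains a ``fresh window'' of variation of size $\gtrsim\Lambda_k^n$ not already charged by coarser, larger-amplitude pits --- is exactly the heart of the lemma, and your proposal does not prove it; you yourself defer it as ``the technical heart \dots\ the step I expect to require the most care.'' Moreover, the mechanism you invoke to close it is doubtful as stated: there is no relation between the amplitude $\Lambda_k^n$ and the scale $2^{-n}$, so there is nothing for a ``geometric loss in the scale separation $n-n'$'' to act on. A point can lie in the middle interval $[u,v]$ of infinitely many coarser pits (one batch per coarser scale), and the variation you would like to reserve for a fine pit sits inside intervals on which arbitrarily many coarser pits have already placed both their ``high'' endpoints and their ``low'' middle; a greedy disjointification by decreasing amplitude gives no control on how much of the fine pit's oscillation survives this, and summing per-scale losses without a decaying weight diverges exactly like the naive bound you already discarded.

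For comparison, the paper does not charge pit-by-pit at all. It first splits each pit into one of two configurations (point--interval--interval--point versus interval--interval--point--interval--interval, Claim~\ref{claimAB}), restricts to arithmetic progressions of scales ($n$ fixed mod $10$, so consecutive active scales differ by the factor $2^{10}=1024$) and of positions ($k$ fixed mod $200$, so same-scale pits are far apart), and then runs a coarse-to-fine induction (Propositions~\ref{propA} and \ref{propB}): it maintains a single global alternating system of ``high'' points/intervals and ``low'' intervals whose telescoped sum is at most $\Var f$, and at each new scale it modifies the gaps of the existing system (via Lemma~\ref{lemmUV} and the insertion/replacement arguments) so that the sum increases by at least $\tfrac15\Lambda_k^n$ for every newly incorporated pit. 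The losses are the fixed combinatorial factors $10\times200\times5\times2$, not per-pit reservations. So the bound is obtained by one global comparison with $\Var f$ rather than by the disjoint-interval charging you propose, and precisely the allocation step you left open is what Sections~\ref{sec:dealA} and \ref{sec:dealB} are built to do. As it stands, your argument has a genuine gap at that step.
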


Actually, this lemma is proven only for $ \varrho \leq \frac{50}{4} $ but the version for general $ \varrho $ can be obtained by modifying of the constants in the proof.

\medskip

The paper is organized as follows. In Section~\ref{sec:propmaxfct}, we study relations between a function and its maximal function. In Section~\ref{sec:keylem}, we introduce the key lemma (Lemma~\ref{keylemma}), proof of which takes also Sections \ref{sec:dealA} and \ref{sec:dealB}. Finally, Theorem~\ref{thm} is proven in Section~\ref{sec:proofthm}. Its proof is based on two previous results which are Lemma~\ref{lemmsuvt} and Lemma~\ref{keylemma}. The paper concludes with Section~\ref{sec:proofcor}, devoted to the proof of Corollaries \ref{corlocac} and \ref{corw11}.

\medskip

In closing of this introductory section, we present some informal notes concerning the proof of Theorem~\ref{thm} which may be helpful but the reader may skip them as well.

We will study the variations of $ f $ and $ Mf $ using two simple structures. We introduce the structure for $ Mf $ first (Definition~\ref{defpeaketc}), as we want to show that $ f $ oscillates comparably with $ Mf $. This structure, called a \emph{peak}, consists of three points $ p < r < q $ such that the value of $ Mf $ at the middle point $ r $ is greater than the values at $ p $ and $ q $. The variation of $ Mf $ is related to the quantity
$$ Mf(r) - Mf(p) + Mf(r) - Mf(q), \leqno \textrm{(Q1)} $$
since the sum of these quantities for a suitable system of peaks almost realizes the variation of $ Mf $.

For a peak $ p < r < q $, there are two possibilities. If some values of $ f $ in the interval $ (p, q) $ are close to or greater than $ Mf(r) $, then the peak can be easily handled, as the variation of $ f $ over $ (p - \varepsilon , q + \varepsilon ) $ is close to or greater than the quantity (Q1). In the other case, the peak can not be handled so easily, and we call such a peak \emph{essential}. Our tool for working with essential peaks is Lemma~\ref{lemmsuvt} which allows us to pass to the second type of structures.

The structure for $ f $ is the point-interval-point system given by numbers $ s < u < v < t $. Similarly as above, the structure is endowed with a quantity which presents the impact on the variation of $ f $. This quantity is given by
$$ \min \{ f(s), f(t) \} - \dashint _{u}^{v} f. \leqno \textrm{(Q2)} $$
In contrast to the previous formula, values in the middle are expected to be \emph{less than} values on the boundary (and this is clearly not the only difference).

We will deal with a system of these structures. This will take a significant part of the paper, due to certain difficulties. First of all, the corresponding intervals $ [s, t] $ do not have to be disjoint. Even, a point can be an element of arbitrarily large number of intervals (unless the lengths of the intervals are comparable).

The aim of this part of the proof is to show that each structure has its own contribution to the variation of $ f $ given by (Q2). This aim is met by an abstract statement, provided in two versions. Lemma~\ref{keylemma} is the exact version, while Lemma~\ref{keylemmamod} above is the more elaborated version.

\section{A property of the maximal function} \label{sec:propmaxfct}

Throughout the whole proof of Theorem \ref{thm}, a function $ f : \mathbbm{R} \rightarrow \mathbbm{R} $ of bounded variation will be fixed. Without loss of generality, we will suppose that $ f \geq 0 $.

\begin{lemma} \label{lemm0}
Let $ r \in \mathbbm{R} $ and $ \omega > 0 $ be such that $ \dashint _{r-\omega }^{r+\omega } f = Mf(r) $. Let moreover $ p \in \mathbbm{R} $ satisfy $ r - \omega < p < r $ and $ Mf(p) \leq Mf(r) $. Then there is $ t \in (2p - (r-\omega), r + \omega ) $ such that
$$ f(t) \geq Mf(r) \quad \textrm{and} \quad f(t) \geq Mf(p) + \frac{Mf(r)-Mf(p)}{r-p} \cdot \omega . $$
\end{lemma}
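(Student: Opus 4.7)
The plan is to compare the ball $ B(r, \omega ) $ (whose average equals $ Mf(r) $ by hypothesis) with a carefully chosen ball centered at $ p $. Specifically, I would consider the ball $ B(p, \omega - (r - p)) $; its radius is positive because $ p > r - \omega $, and its endpoints are $ p - (\omega - (r - p)) = r - \omega $ and $ p + (\omega - (r - p)) = 2p - (r - \omega ) $. Thus this ball lies inside $ B(r, \omega ) $, shares its left endpoint, and its complement within $ B(r, \omega ) $ is \emph{exactly} the target interval $ (2p - (r - \omega ), r + \omega ) $, which has length $ 2(r - p) $.

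From $ Mf(p) \geq \dashint_{B(p, \omega - (r - p))} f $ I would deduce
\[
\int _{r - \omega } ^{2p - (r - \omega )} f \; \leq \; 2 \bigl( \omega - (r - p) \bigr) \, Mf(p),
\]
while the hypothesis provides $ \int _{r - \omega }^{r + \omega } f = 2 \omega \, Mf(r) $. Subtracting and dividing by the length $ 2 (r - p) $ of the complementary interval yields
\[
\dashint _{2p - (r - \omega )} ^{r + \omega } f \; \geq \; \frac{\omega \, Mf(r) - \bigl( \omega - (r - p) \bigr) Mf(p)}{r - p} \; = \; Mf(p) + \frac{Mf(r) - Mf(p)}{r - p} \cdot \omega .
\]

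To conclude, I would observe that this average also majorises $ Mf(r) $ itself: since $ \omega > r - p $ (because $ p > r - \omega $) and $ Mf(r) \geq Mf(p) $,
\[
Mf(p) + \frac{Mf(r) - Mf(p)}{r - p} \cdot \omega \; \geq \; Mf(p) + \bigl( Mf(r) - Mf(p) \bigr) \; = \; Mf(r) .
\]
Hence the average of $ f $ over $ (2p - (r - \omega ), r + \omega ) $ dominates both quantities in the statement simultaneously, and there must exist $ t $ in this interval at which $ f(t) $ attains at least the average, realising both claimed inequalities.

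I do not anticipate any genuine obstacle; the proof is essentially a matter of spotting the correct auxiliary ball, after which everything reduces to the subtraction-and-division above. The only minor technicality is that the final `existence of $ t $' step is to be read in the measure-theoretic sense (or for a suitable pointwise representative of the $ BV $ function $ f $), which is standard.
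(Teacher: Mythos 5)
Your proposal is correct and follows essentially the same route as the paper: both compare the ball $B(r,\omega)$ with the sub-ball centered at $p$ with left endpoint $r-\omega$ (your $B(p,\omega-(r-p))$ is exactly the paper's interval $(r-\omega,\,2p-(r-\omega))$), bound the average of $f$ over the complementary interval $(2p-(r-\omega),\,r+\omega)$ from below, and pick $t$ there with $f(t)$ at least that average. The only cosmetic difference is that the paper gets $f(t)\geq Mf(r)$ directly from the two averages, while you deduce it from the explicit lower bound using $\omega\geq r-p$; both are fine.
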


\begin{proof}
We choose $ t \in (2p - (r-\omega), r + \omega ) $ so that $ f(t) \geq \dashint _{2p-(r-\omega )}^{r+\omega } f $. To show that the choice works, let us consider the interval $ (r - \omega , 2p - (r - \omega )) $ centered at $ p $. We have
$$ \dashint _{r-\omega }^{r+\omega } f = Mf(r) \quad \textrm{and} \quad \dashint _{r-\omega }^{2p-(r-\omega )} f \leq Mf(p) \leq Mf(r). $$
Immediately,
$$ f(t) \geq \dashint _{2p-(r-\omega )}^{r+\omega } f \geq Mf(r). $$
Further,
\begin{eqnarray*}
\int _{2p-(r-\omega )}^{r+\omega } f & = & \int _{r-\omega }^{r+\omega } f - \int _{r-\omega }^{2p-(r-\omega )} f \\
 & \geq & 2\omega \cdot Mf(r) - 2 \big( p-(r-\omega) \big) \cdot Mf(p) \phantom{\int _{r-\omega }^{r+\omega } f} \\
 & = & 2(r-p) \cdot Mf(p) + 2\omega \cdot \big( Mf(r)-Mf(p) \big) ,
\end{eqnarray*}
and so
$$ f(t) \geq \dashint _{2p-(r-\omega )}^{r+\omega } f \geq Mf(p) + \frac{Mf(r)-Mf(p)}{r-p} \cdot \omega . $$
\end{proof}

\begin{remark}
If $ p $ is regular in the sense that $ Mf(p) \geq f(p) $, then $ t $ fulfills
$$ \frac{f(t) - f(p)}{\omega } \geq \frac{Mf(r)-Mf(p)}{r-p}. $$
Note that $ \omega $ is close to $ t - p $ if $ p $ is close to $ r $. Thus, the average increase of $ f $ in $ (p,t) $ is comparable to the average increase of $ Mf $ in $ (p,r) $. We expected at first that this might lead to a simple proof of Theorem \ref{thm}, possibly with $ C = 1 $. Nevertheless, no simple proof was found at last.

In fact, by a modification of the proof of Lemma \ref{lemm0}, one can even find $ t \in (2p - (r-\omega), r + \omega ) $ such that
$$ \frac{f(t) - f(p)}{t-p} \geq \frac{Mf(r)-Mf(p)}{r-p}. $$
It is sufficient to choose $ t $ so that $ g(t) \geq \dashint _{2p-(r-\omega )}^{r+\omega } g $ for the function $ g(x) = f(x) - \frac{Mf(r)-Mf(p)}{r-p} \cdot x $.

Notice also that the last inequality from the proof gives
$$ \frac{Mf(p+\omega) - Mf(p)}{\omega } \geq \frac{Mf(r)-Mf(p)}{r-p}. $$
\end{remark}

\begin{definition} \label{defpeaketc}
\begin{itemize}
\item A \emph{peak} is the system consisting of three points $ p < r < q $ such that $ Mf(p) < Mf(r) $ and $ Mf(q) < Mf(r) $,
\item the \emph{variation} of a peak $ \mathbbm{p} = \{ p < r < q \} $ is given by
$$ \var \mathbbm{p} = Mf(r) - Mf(p) + Mf(r) - Mf(q), $$
\item the \emph{variation} of a system $ \mathbbm{P} $ of peaks is
$$ \var \mathbbm{P} = \sum _{\mathbbm{p} \in \mathbbm{P}} \var \mathbbm{p}, $$
\item a peak $ \mathbbm{p} = \{ p < r < q \} $ is \emph{essential} if $ \sup _{p < x < q} f(x) \leq Mf(r) - \frac{1}{4} \var \mathbbm{p} $,
\item for the top $ r $ of an essential peak $ p < r < q $, we define (see Lemma \ref{lemmomega})
$$ \omega (r) = \max \Big\{ \omega > 0 : \dashint _{r-\omega}^{r+\omega} f = Mf(r) \Big\}. $$
\end{itemize}
\end{definition}

\begin{lemma} \label{lemmomega}
Let $ \mathbbm{p} = \{ p < r < q \} $ be an essential peak. Then $ \omega (r) $ is well defined. Moreover,
$$ r - \omega (r) < p \quad \textrm{and} \quad q < r + \omega (r). $$
\end{lemma}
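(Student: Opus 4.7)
My plan is to establish the lemma in two stages: first, well-definedness of $\omega(r)$; second, the two displayed inequalities. Set $\phi(\omega) := \dashint_{r-\omega}^{r+\omega} f$, which is continuous in $\omega > 0$. For well-definedness I need the set $S := \{\omega > 0 : \phi(\omega) = Mf(r)\}$ to be non-empty, bounded above, and closed; then $\omega(r) = \max S$ exists.

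I would establish two preliminary facts. First, for $\omega \leq \min(r-p, q-r)$, the interval $(r-\omega, r+\omega)$ lies in $(p, q)$, so the essential-peak hypothesis gives $\phi(\omega) \leq Mf(r) - \tfrac{1}{4}\var\mathbbm{p} < Mf(r)$. Second, since $f$ is of bounded variation it decomposes as a difference of bounded monotone functions, and so has finite limits $a := \lim_{x \to -\infty} f(x)$ and $b := \lim_{x \to +\infty} f(x)$; a direct tail estimate then shows $\phi(\omega) \to (a+b)/2$ as $\omega \to \infty$, and the same limit holds when $r$ is replaced by $p$. If $Mf(r)$ were equal to $(a+b)/2$ (so the supremum were only approached in the limit), the $p$-asymptotic would force $Mf(p) \geq (a+b)/2 = Mf(r)$, contradicting the peak condition $Mf(p) < Mf(r)$. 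Hence $Mf(r) > (a+b)/2$, so $\phi(\omega) < Mf(r)$ for all sufficiently large $\omega$, and by continuity the supremum is attained on a closed bounded subset of $(0,\infty)$ whose maximum defines $\omega(r)$.

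The first preliminary fact also yields $\omega(r) > \min(r-p, q-r)$ strictly. To prove $r - \omega(r) < p$, I would argue by contradiction: if $\omega(r) \leq r - p$, then $\omega(r) > q - r$, so $r < q < r + \omega(r)$. Applying Lemma~\ref{lemm0} in its reflected form (obtained via $x \mapsto 2r - x$) to the point $q$, using $Mf(q) \leq Mf(r)$, produces a $t$ with $f(t) \geq Mf(r)$ and $t \in (r - \omega(r),\, 2q - r - \omega(r))$. Since $r - \omega(r) \geq p$ and $2q - r - \omega(r) < q$ (the latter because $\omega(r) > q - r$), this $t$ lies in $(p, q)$, contradicting $\sup_{(p,q)} f \leq Mf(r) - \tfrac{1}{4}\var\mathbbm{p} < Mf(r)$. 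The inequality $q < r + \omega(r)$ follows by a symmetric argument.

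I expect the main obstacle to be the well-definedness step, specifically ruling out the scenario in which $Mf(r)$ is approached only in the limit $\omega \to \infty$. This is precisely where both the BV structure of $f$ (supplying limits at $\pm\infty$) and the strict peak inequality $Mf(p) < Mf(r)$ are genuinely needed; without either, $\omega(r)$ could fail to exist.
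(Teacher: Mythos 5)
Your proof is correct and takes essentially the same route as the paper: essentiality controls the small radii, comparison with $Mf(p)$ via the common limit of the symmetric averages as $\omega \to \infty$ controls the large radii (so $\omega(r)$ is attained), and Lemma~\ref{lemm0} produces a point $t$ with $f(t) \geq Mf(r)$ that essentiality forces outside $(p,q)$. The only difference is organizational: you prove the two endpoint inequalities separately by contradiction (using a reflected form of Lemma~\ref{lemm0} and the bound $\omega(r) > \min(r-p,\,q-r)$), whereas the paper observes that at least one of $p,q$ lies in $(r-\omega(r), r+\omega(r))$ and then deduces the other inclusion directly — this is immaterial.
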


\begin{proof}
We have
$$ Mf(r) > \lim _{\omega \rightarrow \infty } \dashint _{r-\omega}^{r+\omega} f \quad \textrm{and} \quad Mf(r) > \lim _{\omega \searrow 0} \dashint _{r-\omega}^{r+\omega} f, $$
as $ Mf(r) > Mf(p) \geq \lim _{\omega \rightarrow \infty } \dashint _{p-\omega}^{p+\omega} f = \lim _{\omega \rightarrow \infty } \dashint _{r-\omega}^{r+\omega} f $ and
$$ (r-\omega , r+\omega ) \subset (p, q) \quad \Rightarrow \quad \dashint _{r-\omega}^{r+\omega} f \leq Mf(r) - \frac{1}{4} \var \mathbbm{p}. $$
It follows that $ \omega (r) $ is well defined. Moreover, at least one of the points $ p, q $ belongs to $ (r-\omega (r), r+\omega (r)) $. We may assume that $ p \in (r-\omega (r), r+\omega (r)) $. It remains to realize that also $ q \in (r-\omega (r), r+\omega (r)) $.

By Lemma \ref{lemm0}, we can find a $ t $ such that $ p < t < r + \omega (r) $ and $ f(t) \geq Mf(r) $. Since $ \mathbbm{p} $ is an essential peak, $ t $ is not an element of $ (p, q) $, and we obtain $ q \leq t < r + \omega (r) $.
\end{proof}

\begin{lemma} \label{lemmsuvt}
Let $ (x, y) $ be an interval of length $ L $. Let a non-empty system
$$ \mathbbm{P} = \Big\{ \mathbbm{p}_{i} = \{ p_{i} < r_{i} < q_{i} \} : 1 \leq i \leq m \Big\} $$
of essential peaks satisfy
$$ x \leq r_{1} < q_{1} \leq p_{2} < r_{2} < q_{2} \leq \dots \leq p_{m-1} < r_{m-1} < q_{m-1} \leq p_{m} < r_{m} \leq y $$
and
$$ 25L < \omega (r_{i}) \leq 50L, \quad 1 \leq i \leq m. $$
Then there are $ s < u < v < t $ such that
$$ x - 50L \leq s, \quad t \leq y + 50L, $$
$$ u - s \geq 4L, \quad v - u \geq L, \quad t - v \geq 4L $$
and
$$ \min \{ f(s), f(t) \} - \dashint _{u}^{v} f \geq \frac{1}{12} \var \mathbbm{P}. $$
\end{lemma}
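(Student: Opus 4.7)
My plan is to take $(u,v) = (x,y)$ and to construct $s$ to the left of $x$ and $t$ to the right of $y$, both at points where $f$ is large, by applying Lemma~\ref{lemm0} to the extremal peaks $\mathbbm{p}_1$ and $\mathbbm{p}_m$. Specifically, the symmetric version of Lemma~\ref{lemm0} applied to $\mathbbm{p}_1$ with $q = q_1$ (legitimate because $Mf(q_1) < Mf(r_1)$) produces a point $s$ in the window $(r_1 - \omega_1,\, 2q_1 - r_1 - \omega_1)$ with
\[
f(s) \;\geq\; Mf(q_1) + \frac{\omega_1}{q_1 - r_1}\bigl(Mf(r_1) - Mf(q_1)\bigr).
\]
Since $q_1 \leq p_2 \leq y$ forces $q_1 - r_1 \leq L$ while $\omega_1 > 25L$, the ratio $\omega_1/(q_1-r_1)$ is at least $25$, so $f(s) \geq 25\,Mf(r_1) - 24\,Mf(q_1)$. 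A short computation using $r_1 \geq x$, $q_1 \leq y$, and $\omega_1 \in (25L,50L]$ also places $s$ in $(x-50L,\, x-23L)$. Symmetrically, Lemma~\ref{lemm0} applied to $\mathbbm{p}_m$ with $p = p_m$ yields a point $t \in (y+23L,\, y+50L)$ with $f(t) \geq 25\,Mf(r_m) - 24\,Mf(p_m)$. Taking $u := x$ and $v := y$, the positional constraints $s \geq x-50L$, $t \leq y+50L$, $u-s \geq 4L$, $v-u = L$, and $t-v \geq 4L$ are then all immediate.

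To finish, I must bound $\dashint_x^y f$ from above by an amount leaving at least $\tfrac{1}{12}\var \mathbbm{P}$ below both $f(s)$ and $f(t)$. For each peak $\mathbbm{p}_i$, the averaging identity $\int_{r_i-\omega_i}^{r_i+\omega_i} f = 2\omega_i\,Mf(r_i)$ combines with the essentiality bound $f < Mf(r_i) - \tfrac{1}{4}\var\mathbbm{p}_i$ on $(p_i,q_i)$ to transfer a mass-deficit of size $\tfrac{1}{4}(q_i - p_i)\var\mathbbm{p}_i$ away from the peak interval. Because $\omega_i > 25L$ and $r_i \in [x,y]$, the big interval $(r_i-\omega_i,\,r_i+\omega_i)$ contains $[x,y]$ for every $i$, so the identity rearranges into an inequality controlling $\int_x^y f$ in terms of $Mf(r_i)$ and $\var\mathbbm{p}_i$. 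I would aggregate these estimates across all peaks via a suitable weighted combination, so that the individual deficits accumulate into $\var \mathbbm{P}$; pairing with the enhanced lower bounds on $f(s)$ and $f(t)$ computed above should then deliver the factor $\tfrac{1}{12}$.

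The chief obstacle is that $\var \mathbbm{P}$ is a sum over arbitrarily many peaks, yet the conclusion asks for a single oscillation of $f$ to witness all of it. What keeps this from being paradoxical is a trade-off built into the hypotheses: the supports $(p_i,q_i)$ are disjoint and, except for $\mathbbm{p}_1$ and $\mathbbm{p}_m$, lie inside $[x,y]$, so if $m$ is large they must on average be narrow; narrow peaks then make the enhancement factor $\omega_i/(r_i-p_i)$ in Lemma~\ref{lemm0} very large, pushing $f(s)$ and $f(t)$ well above every $Mf(r_i)$. Pinning down this trade-off quantitatively enough to produce the stated constant $\tfrac{1}{12}$ is the technical core of the argument; the case $m = 1$, where neither $q_1-r_1$ nor $r_1-p_1$ is automatically controlled by $L$, will probably need a separate treatment that exploits the single peak's own interior width to supply the required enhancement.
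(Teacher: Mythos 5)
Your geometric bookkeeping for $s$ and $t$ is fine (the window $(r_1-\omega(r_1),\,2q_1-(r_1+\omega(r_1)))$ does land in $[x-50L,\,x-23L]$ when $m\geq 2$, and the factor $25$ amplification from Lemma~\ref{lemm0} is correct), but the quantitative core of the plan fails, for two linked reasons. First, with $(u,v)=(x,y)$ you have no usable upper bound on $\dashint_x^y f$: essentiality only caps $f$ on the peak intervals $(p_i,q_i)$ at $Mf(r_i)-\tfrac14\var\mathbbm{p}_i$, which is essentially the top level, and $f$ is completely unconstrained on the gaps $[q_i,p_{i+1}]$; the ``mass-deficit'' you hope to transfer is only of size $\tfrac14(q_i-p_i)\var\mathbbm{p}_i$, which is negligible when the peak intervals are short compared with $L$. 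Second, your certified lower bounds on $f(s)$ and $f(t)$ involve only the first and last peaks: they are of the form $Mf(q_1)+25\bigl(Mf(r_1)-Mf(q_1)\bigr)$ and $Mf(p_m)+25\bigl(Mf(r_m)-Mf(p_m)\bigr)$, i.e.\ single half-variations amplified by a \emph{fixed} constant, whereas the target $\tfrac1{12}\var\mathbbm{P}$ sums over all $m$ peaks. Concretely, if $Mf(r_i)\equiv 1$, $Mf(p_i)=Mf(q_i)=1-2\varepsilon$ and $f\approx 1-\varepsilon$ throughout $(x,y)$, your guaranteed gap $\min\{f(s),f(t)\}-\dashint_x^y f$ is $O(\varepsilon)$ while $\tfrac1{12}\var\mathbbm{P}\approx \tfrac{m\varepsilon}{3}$, so the choice $(u,v)=(x,y)$ with $s,t$ built from $\mathbbm{p}_1,\mathbbm{p}_m$ cannot close the estimate once $m$ is large.

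The trade-off you describe in your last paragraph (many peaks force narrow peaks, hence large slopes) is exactly the right idea, but your construction never harvests the middle peaks. The paper's part (II.b) does this: writing $e_i$ for suitably chosen base points, it applies Lemma~\ref{lemm0} to the peak maximizing the slope $\bigl(Mf(r_i)-Mf(e_{i+1})\bigr)/(e_{i+1}-r_i)$; since the denominators are disjoint pieces of an interval of length $L$, this maximal slope already dominates $\tfrac1L\sum_i\bigl(Mf(r_i)-Mf(e_{i+1})\bigr)\geq\tfrac1{4L}\var\mathbbm{P}$ (after a dichotomy on the telescoping difference $Mf(e_{m+1})-Mf(e_1)$), and multiplying by $\omega>25L$ yields a single point whose value exceeds the relevant base value $Mf(e)$ by $\tfrac{25}{4}\var\mathbbm{P}$. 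Equally essential, the middle interval is then taken to be $(e-L/2,\,e+L/2)$, centered at that base point, so that $\dashint_u^v f\leq Mf(e)$ is automatic --- this is what replaces your uncontrollable average over $(x,y)$. Finally, the boundary peaks (those with $p_1<x$ or $q_m>y$, including your $m=1$ worry) are handled by splitting $\mathbbm{P}$ into three subsystems and keeping the one carrying at least a third of the variation. To repair your argument you would need both modifications: extract the amplification from the peak with the extremal slope (not just the extremal peaks), and compare against a short interval centered at a point where $Mf$ is small rather than against $\dashint_x^y f$.
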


\begin{proof}
We divide the proof into three parts. In parts I. and II., we consider two special cases and find appropriate numbers satisfying the improved inequality
$$ \min \{ f(s), f(t) \} - \dashint _{u}^{v} f \geq \frac{1}{4} \var \mathbbm{P}. \leqno (*) $$
The general case is considered in part III.

I. Let us assume that the system $ \mathbbm{P} $ consists of one peak $ \mathbbm{p} = \{ p < r < q \} $. First, we find $ s $ and $ t $ such that
$$ f(s) \geq Mf(r), \quad x - 50L \leq s \leq 2q - (r+\omega (r)), $$
$$ f(t) \geq Mf(r), \quad 2p - (r-\omega (r)) \leq t \leq y + 50L. $$
Due to the symmetry, it is sufficient to find a $ t $ only. Recall that $ r - \omega (r) < p $ by Lemma \ref{lemmomega}. Hence, a suitable $ t $ is given by Lemma \ref{lemm0}, since $ r + \omega (r) \leq y + 50L $.

We consider two possibilities.

(I.a) If $ q - p < 10L $, then we have
$$ s \leq 2q - (r+\omega (r)) < 2p + 20L - r - 25L < p - 5L < p - L/2 - 4L, $$
and it can be shown similarly that $ q + L/2 + 4L \leq t $. We take
$$ (u,v) = \left\{\begin{array}{ll} (p-L/2,p+L/2), & \quad Mf(p) \leq Mf(q), \\
(q-L/2,q+L/2), & \quad Mf(p) > Mf(q). \\
\end{array} \right. $$
We obtain
$$ \min \{ f(s), f(t) \} - \dashint _{u}^{v} f \geq Mf(r) - \min \{ Mf(p), Mf(q) \} \geq \frac{1}{2} \var \mathbbm{p}, $$
and $ (*) $ is proven.

(I.b) If $ q - p \geq 10L $, then we use
$$ s \leq 2q - (r+\omega (r)) < q, \quad p < 2p - (r-\omega (r)) \leq t $$ 
(here, Lemma \ref{lemmomega} is needed again). At the same time,
$$ \min \{ f(s), f(t) \} \geq Mf(r) > Mf(r) - \frac{1}{4} \var \mathbbm{p} \geq \sup _{p < x < q} f(x), $$
and so $ s $ and $ t $ can not belong to $ (p,q) $. It follows that
$$ s \leq p, \quad q \leq t. $$
Let us realize that the choice
$$ (u, v) = \big( (p + q - L)/2, (p + q + L)/2 \big) $$
works. Since $ u - p = (q - p - L)/2 = q - v $, we have
$$ u - s \geq u - p \geq 9L/2 , \quad t - v \geq q - v \geq 9L/2. $$
One can verify $ (*) $ by the computation 
$$ \min \{ f(s), f(t) \} - \dashint _{u}^{v} f \geq Mf(r) - \sup _{p < x < q} f(x) \geq \frac{1}{4} \var \mathbbm{p}. $$

II. Let us assume that the peaks are contained in the interval $ [x,y] $. (I.e., $ x \leq p_{1} $ and $ q_{m} \leq y $.) For $ 1 \leq i \leq m + 1 $, we define
$$ e_{i} = \left\{\begin{array}{ll} p_{i}, & \quad i = 1 \textrm{ or } Mf(p_{i}) \leq Mf(q_{i-1}), \\
q_{i-1}, & \quad i = m+1 \textrm{ or } Mf(p_{i}) > Mf(q_{i-1}). \\
\end{array} \right. $$
We work mainly with the modified system of peaks
$$ \widetilde{\mathbbm{P}} = \Big\{ \widetilde{\mathbbm{p}}_{i} = \{ e_{i} < r_{i} < e_{i+1} \} : 1 \leq i \leq m \Big\} . $$
For $ 1 \leq i \leq m $, let us find points $ s_{i} $ and $ t_{i} $ such that
$$ f(s_{i}) \geq Mf(r_{i}) \quad \textrm{and} \quad f(s_{i}) \geq Mf(e_{i+1}) + \frac{Mf(r_{i})-Mf(e_{i+1})}{e_{i+1}-r_{i}} \cdot \omega (r_{i}), $$
$$ x - 50L \leq s_{i} \leq x - 23L, $$
$$ f(t_{i}) \geq Mf(r_{i}) \quad \textrm{and} \quad f(t_{i}) \geq Mf(e_{i}) + \frac{Mf(r_{i})-Mf(e_{i})}{r_{i}-e_{i}} \cdot \omega (r_{i}), $$
$$ y + 23L \leq t_{i} \leq y + 50L. $$
Due to the symmetry, it is sufficient to find a $ t_{i} $ only. A suitable $ t_{i} $ is given by Lemma~\ref{lemm0}, since $ 2e_{i} - (r_{i}-\omega (r_{i})) \geq 2x - y + 25L = y + 23L $ and $ r_{i} + \omega (r_{i}) \leq y + 50L $.

Similarly as in part I., we consider two possibilities.

(II.a) Assume that
$$ \big| Mf(e_{m+1})-Mf(e_{1}) \big| > \frac{1}{2} \var \widetilde{\mathbbm{P}}. $$
We may assume moreover that $ Mf(e_{m+1}) > Mf(e_{1}) $. As $ \min \{ f(s_{m}), f(t_{m}) \} \geq Mf(r_{m}) > Mf(e_{m+1}) $, we obtain
$$ \min \{ f(s_{m}), f(t_{m}) \} - Mf(e_{1}) > Mf(e_{m+1}) - Mf(e_{1}) > \frac{1}{2} \var \widetilde{\mathbbm{P}} \geq \frac{1}{2} \var \mathbbm{P}. $$
The required properties including $ (*) $ are satisfied for
$$ s = s_{m}, \quad (u, v) = (e_{1}-L/2, e_{1}+L/2), \quad t = t_{m}. $$

(II.b) Assume that
$$ \big| Mf(e_{m+1})-Mf(e_{1}) \big| \leq \frac{1}{2} \var \widetilde{\mathbbm{P}}. $$
We have
$$ Mf(e_{m+1})-Mf(e_{1}) = \sum _{i=1}^{m} \Big[ \big( Mf(r_{i})-Mf(e_{i}) \big) - \big( Mf(r_{i})-Mf(e_{i+1}) \big) \Big] , $$
$$ \var \widetilde{\mathbbm{P}} = \sum _{i=1}^{m} \Big[ \big( Mf(r_{i})-Mf(e_{i}) \big) + \big( Mf(r_{i})-Mf(e_{i+1}) \big) \Big] , $$
and so the assumption can be written in the form
$$ \sum _{i=1}^{m} \big( Mf(r_{i})-Mf(e_{i}) \big) \geq \frac{1}{4} \var \widetilde{\mathbbm{P}} $$
$$ \quad \textrm{and} \quad \sum _{i=1}^{m} \big( Mf(r_{i})-Mf(e_{i+1}) \big) \geq \frac{1}{4} \var \widetilde{\mathbbm{P}}. $$
Let $ j $ and $ k $ be such that
$$ \frac{Mf(r_{j})-Mf(e_{j+1})}{e_{j+1}-r_{j}} = \max _{1 \leq i \leq m} \frac{Mf(r_{i})-Mf(e_{i+1})}{e_{i+1}-r_{i}}, $$
$$ \frac{Mf(r_{k})-Mf(e_{k})}{r_{k}-e_{k}} = \max _{1 \leq i \leq m} \frac{Mf(r_{i})-Mf(e_{i})}{r_{i}-e_{i}}. $$
We have
\begin{eqnarray*}
f(s_{j}) - Mf(e_{j+1}) & \geq & \frac{Mf(r_{j})-Mf(e_{j+1})}{e_{j+1}-r_{j}} \cdot \omega (r_{j}) \\
 & \geq & \frac{Mf(r_{j})-Mf(e_{j+1})}{e_{j+1}-r_{j}} \cdot 25L \\
 & \geq & \frac{Mf(r_{j})-Mf(e_{j+1})}{e_{j+1}-r_{j}} \cdot 25\sum _{i=1}^{m} (e_{i+1}-r_{i}) \\
 & = & 25\sum _{i=1}^{m} \frac{Mf(r_{j})-Mf(e_{j+1})}{e_{j+1}-r_{j}} \cdot (e_{i+1}-r_{i}) \\
 & \geq & 25\sum _{i=1}^{m} \big( Mf(r_{i})-Mf(e_{i+1}) \big) \\
 & \geq & \frac{25}{4} \var \widetilde{\mathbbm{P}}, \\
\end{eqnarray*}
and the same bound can be shown for $ f(t_{k}) - Mf(e_{k}) $. Hence,
$$ \min \{ f(s_{j}), f(t_{k}) \} - Mf(e) \geq \frac{25}{4} \var \widetilde{\mathbbm{P}} \geq \frac{25}{4} \var \mathbbm{P} $$
for some $ e \in \{ e_{j+1}, e_{k} \} $. The required properties including $ (*) $ are satisfied for
$$ s = s_{j}, \quad (u, v) = (e-L/2, e+L/2), \quad t = t_{k}. $$

III. In the general case, the system $ \mathbbm{P} $ can be divided into three subsystems
$$ \mathbbm{P}_{1} = \{ \mathbbm{p}_{i} : p_{i} < x \} , \quad \mathbbm{P}_{2} = \{ \mathbbm{p}_{i} : x \leq p_{i}, q_{i} \leq y \} , \quad \mathbbm{P}_{3} = \{ \mathbbm{p}_{i} : x \leq p_{i}, y < q_{i} \} . $$
Each of these systems consists of at most one peak or of peaks contained in $ [x,y] $. Thus, by parts I. and II. of the proof, if the system is non-empty, then there are appropriate numbers satisfying the improved inequality $ (*) $. The numbers $ s < u < v < t $ assigned to a $ \mathbbm{P}_{k} $ with $ \var \mathbbm{P}_{k} \geq \frac{1}{3} \var \mathbbm{P} $ work.
\end{proof}

\section{Key lemma} \label{sec:keylem}

In this section, we formulate our main tool for investigating the variation of the function $ f $. We introduce some notation concerning its proof but the main part of the proof will be accomplished in Sections~\ref{sec:dealA} and \ref{sec:dealB}.

\begin{lemma} \label{keylemma}
Let $ \Lambda _{k}^{n}, n \geq 0, k \in \mathbbm{Z}, $ be non-negative numbers such that only finitely of them are positive. Let $ L_{0} > 0 $ and $ L_{n} = 2^{-n}L_{0} $ for $ n \in \mathbbm{N} $. Assume that, for every $ (n,k) $ with $ \Lambda _{k}^{n} > 0 $, there are $ s < u < v < t $ such that
$$ (k-50)L_{n} \leq s, \quad t \leq (k+51)L_{n}, $$
$$ u - s \geq 4L_{n}, \quad v - u \geq L_{n}, \quad t - v \geq 4L_{n} $$
and
$$ \min \{ f(s), f(t) \} - \dashint _{u}^{v} f \geq \Lambda _{k}^{n}. $$
Then
$$ \sum _{n,k} \Lambda _{k}^{n} \leq 20000 \Var f. $$
\end{lemma}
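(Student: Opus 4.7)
The plan hinges on the observation that each active index $(n,k)$ forces a \emph{dip} of depth $\geq \Lambda_{k}^{n}$ in the graph of $f$. Since $\dashint_{u}^{v} f \leq \min\{f(s), f(t)\} - \Lambda_{k}^{n}$, there exists $w_{k}^{n} \in [u_{k}^{n}, v_{k}^{n}]$ with $f(w_{k}^{n}) \leq \min\{f(s_{k}^{n}), f(t_{k}^{n})\} - \Lambda_{k}^{n}$, so the variation of $f$ on $[s_{k}^{n}, t_{k}^{n}]$ is at least $2\Lambda_{k}^{n}$. The entire task is to show that, despite the witness intervals $[s_{k}^{n}, t_{k}^{n}] \subset [(k-50)L_{n}, (k+51)L_{n}]$ potentially overlapping heavily both within each scale and across scales, the total depth $\sum_{n,k}\Lambda_{k}^{n}$ is comparable to $\Var f$.

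The within-scale overlap is not really an issue. At a fixed scale $n$, every point belongs to at most $101$ of the windows $[(k-50)L_{n},(k+51)L_{n}]$, so splitting $\mathbb{Z}$ into $101$ arithmetic progressions modulo $101$ partitions the active indices at scale $n$ into $101$ subfamilies with pairwise disjoint windows; on each such subfamily the witness intervals are disjoint, so $\sum_{k} 2\Lambda_{k}^{n}$ is directly bounded by $\Var f$, giving a per-scale bound like $\sum_{k}\Lambda_{k}^{n} \leq 51\,\Var f$. Naively summing over $n$ diverges, so the real content of the lemma lies in taming the cross-scale (nested) overlaps.

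For the cross-scale part, my plan is to exploit the fact that the buffers $u-s \geq 4L_{n}$, $v-u \geq L_{n}$, $t-v \geq 4L_{n}$ pin the ``width'' of the forced dip to be commensurate with $L_{n}$: the dip is genuinely visible at scale $n$ and, being squeezed inside a window of width $\sim 100L_{n}$, it cannot simultaneously be counted as a genuine scale-$m$ dip for $m$ much smaller than $n$. I would therefore set up a greedy/stopping-time selection, scanning indices in order of increasing $n$ (coarsest first), and maintaining a collection of ``already charged'' sub-intervals of $f$. For each new $(n,k)$, one dichotomises: either (A) the dip $[s_{k}^{n}, t_{k}^{n}]$ commits a fresh portion of $\Var f$ outside the currently charged region, so $2\Lambda_{k}^{n}$ is paid for directly; or (B) $[s_{k}^{n}, t_{k}^{n}]$ lies inside a previously charged coarser dip, in which case one must argue quantitatively that only boundedly many such finer witnesses can hide in one coarser dip without producing further variation. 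I expect this dichotomy to correspond to the split into the cases dealt with in Sections \ref{sec:dealA} and \ref{sec:dealB}.

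The main obstacle is case (B): it is tempting but plainly false to say that a nested finer witness is ``free.'' The right estimate should use that the dyadic ratio $L_{n+1} = L_{n}/2$ together with the rigid geometric constraints on $s,u,v,t$ prevent an indefinite tower of nested witnesses of comparable depth from coexisting, while the $101$-fold multiplicity at each individual scale keeps the total multiplicity of the charging bounded by an absolute constant. The careful bookkeeping that assigns each $(n,k)$ either to a disjoint chunk of $\Var f$ or to a ``parent'' coarser witness with controlled branching ratio is, I expect, both the technical heart of Sections \ref{sec:dealA}--\ref{sec:dealB} and the source of the large but universal constant $20000$.
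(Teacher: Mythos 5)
Your reduction of each active $(n,k)$ to a dip of depth $\Lambda_k^n$ (variation at least $2\Lambda_k^n$ on $[s,t]$) and your within-scale disjointification via residues mod $\sim 100$ are fine, but they only reproduce the easy part of the statement. The genuine gap sits exactly where you place it, in your case (B), and the heuristic you offer there — that ``only boundedly many finer witnesses can hide in one coarser dip'' and that the per-scale multiplicity keeps the total charging multiplicity bounded — is not correct and cannot be repaired into the needed mechanism. A single coarse window of width $101L_n$ can contain active witnesses at \emph{every} finer scale $m>n$, roughly $2^{m-n}$ of them per scale even after your mod-$101$ thinning, and their depths $\Lambda_k^m$ are in no way dominated by the coarse dip's depth; so a parent/child charging scheme with bounded branching, in which a nested witness is paid for out of its parent's already-counted variation, simply does not close. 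The paper's resolution is not a charging argument at all but an inductive multiscale construction (Propositions \ref{propA} and \ref{propB}): one maintains an alternating system of points and intervals with mutual gaps $\geq L_n$ whose oscillation sum $\sum_i \big[ f(x_i)+f(x_{i+1}) - 2\dashint_{u_i}^{v_i} f \big]$ (resp.\ its all-averages analogue) is at least $\tfrac15$ of all $\Lambda$'s treated so far, and shows that each new scale's dips can be \emph{inserted} into the gaps of the coarser system — using the $4L_n$/$L_n$ buffers, the pigeonhole Lemma \ref{lemmUV} applied to the long intervals of the old system, and the splitting of indices into residue classes ($n$ mod $10$, $k$ mod $200$, whence $10\cdot 200\cdot(5+5)=20000$) — so that the oscillation sum increases by $\tfrac15\Lambda_k^n$ for each inserted index. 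It is this additivity under insertion, i.e.\ that nested fine dips force \emph{additional} up–down oscillation inside the coarse one, and not any bounded multiplicity of nesting, that allows all scales to be summed against a single $\Var f$; your proposal contains no substitute for this step.

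A secondary inaccuracy: your guess that your fresh-versus-nested dichotomy corresponds to the split between Sections \ref{sec:dealA} and \ref{sec:dealB} is off. That split (Claim \ref{claimAB}) is of a different nature: it asks whether the endpoint values $f(s),f(t)$, or instead the inner average, can be replaced by averages over nearby intervals of length $L_n$, which is what makes the insertion step robust (point values versus interval averages when an element of the old system must be reused or displaced). Both groups $\mathcal{A}$ and $\mathcal{B}$ still face the full cross-scale nesting problem, which is handled in each case by the inductive construction described above.
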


We show at the end of this section how the lemma follows from the results of the next two sections. To finish the proof, it is just sufficient to apply Claim~\ref{claim10Vf} on every $ N $ and every $ K $.

It turns out that the systems obtained directly from the assumption of the lemma are not convenient for our purposes and an additional property is needed. In the following claim, we show that there are systems with one of two additional properties. Unfortunately, we will be able to handle only with one property at the same time, and this will mean twice as much work for us.

\begin{claim} \label{claimAB}
Let $ n \geq 0 $ and $ k \in \mathbbm{Z} $. If $ \Lambda _{k}^{n} > 0 $, then at least one of the following two conditions takes place:

{\rm (A)} There are $ s < \alpha < \beta < \gamma < \delta < t $ such that
$$ (k - 50)L_{n} \leq s, \quad t \leq (k + 51)L_{n}, $$
$$ \alpha - s \geq L_{n}, \quad \beta - \alpha \geq L_{n}, \quad \gamma - \beta = 2L_{n}, \quad \delta - \gamma \geq L_{n}, \quad t - \delta \geq L_{n} $$
and
$$ \min \{ f(s), f(t) \} - \max \Big\{ \dashint _{\alpha }^{\beta } f, \dashint _{\gamma }^{\delta } f \Big\} \geq \frac{1}{2} \Lambda _{k}^{n}. $$

{\rm (B)} There are $ \alpha < \beta < u < v < \gamma < \delta $ such that
$$ (k - 50)L_{n} \leq \alpha , \quad \delta \leq (k + 51)L_{n}, $$
$$ \beta - \alpha \geq L_{n}, \quad u - \beta \geq L_{n}, \quad v - u \geq L_{n}, \quad \gamma - v \geq L_{n}, \quad \delta - \gamma \geq L_{n} $$
and
$$ \min \Big\{ \dashint _{\alpha }^{\beta } f, \dashint _{\gamma }^{\delta } f \Big\} - \dashint _{u}^{v} f \geq \frac{1}{2} \Lambda _{k}^{n}. $$
\end{claim}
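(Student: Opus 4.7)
The target constant $\tfrac12$ strongly suggests a single binary dichotomy. Fix $(n, k)$ with $\Lambda_k^n > 0$ and let $s < u < v < t$ be the tuple from the hypothesis of Lemma~\ref{keylemma}. Set $m := \min\{f(s), f(t)\}$ and $A := \dashint_u^v f$, so that $m - A \geq \Lambda_k^n$. The buffers $u - s \geq 4L_n$ and $t - v \geq 4L_n$, together with $s, t \in [(k-50)L_n, (k+51)L_n]$, leave several multiples of $L_n$ of slack on either side of $[u, v]$, comfortably inside the admissible window.

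My plan is to probe the two immediate flanks of $[u, v]$ at distance $L_n$. Take the candidate intervals $[\alpha_0, \beta_0] := [u - 2L_n, u - L_n]$ and $[\gamma_0, \delta_0] := [v + L_n, v + 2L_n]$, both of length $L_n$; the slack above shows they meet the admissible window. If both $\dashint_{\alpha_0}^{\beta_0} f$ and $\dashint_{\gamma_0}^{\delta_0} f$ are at least $A + \tfrac12 \Lambda_k^n$, then condition (B) is satisfied by the sextuple $(\alpha_0, \beta_0, u, v, \gamma_0, \delta_0)$. Otherwise, after a left-right relabelling one may assume $\dashint_{\gamma_0}^{\delta_0} f \leq m - \tfrac12 \Lambda_k^n$, so that $[\gamma_0, \delta_0]$ is a genuine small-average interval.

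In this second branch I would produce a companion small-average interval $[\alpha, \beta]$ with the rigid spacing $\gamma_0 - \beta = 2L_n$, i.e. $\beta = v - L_n$, and thereby obtain (A). The natural candidate is $[\alpha, \beta] := [v - 2L_n, v - L_n]$, whose geometric admissibility follows from the buffer $u - s \geq 4L_n$. When $v - u \geq 2L_n$ this interval lies inside $[u, v]$, and the bound $\dashint_u^v f \leq m - \Lambda_k^n$ together with a sub-averaging argument (or a decomposition that exploits the already-small $\dashint_{\gamma_0}^{\delta_0} f$) delivers $\dashint_\alpha^\beta f \leq m - \tfrac12 \Lambda_k^n$, completing (A).

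The main obstacle is the narrow regime $L_n \leq v - u < 2L_n$, where $[v - 2L_n, v - L_n]$ reaches back past $u$ and no sub-averaging inside $[u, v]$ is available. In that regime I would either re-run the dichotomy symmetrically with the $u$-anchored candidates $[u - 2L_n, u - L_n]$ and $[u + L_n, u + 2L_n]$ (so that the companion now sits inside or just to the right of $u$ with $\beta = u + L_n$, $\gamma = u + 3L_n$), or combine $[\alpha, \beta]$, $[u, v]$ and $[\gamma_0, \delta_0]$ into a single composite average and solve algebraically for $\dashint_\alpha^\beta f$. The rigidity of the condition $\gamma - \beta = 2L_n$ in (A) is the source of all this case analysis, and the factor $\tfrac12$ is the natural loss of a single-step dichotomy of the kind just sketched.
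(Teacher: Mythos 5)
Your dichotomy has the right general shape, and the (B) branch (flanks $[u-2L_n,u-L_n]$, $[v+L_n,v+2L_n]$ both with average $\geq \dashint_u^v f + \tfrac12\Lambda_k^n$) is fine, but the (A) branch contains a genuine gap at precisely the step you pass over with ``a sub-averaging argument''. From $\dashint_u^v f \leq \min\{f(s),f(t)\} - \Lambda_k^n$ you cannot conclude anything about the average of $f$ over a subinterval in a \emph{prescribed} position such as $[v-2L_n,v-L_n]$: since $f\geq 0$ is merely of bounded variation, take $v-u=3L_n$ and let $f$ be a tall bump supported exactly on the middle third $[u+L_n,v-L_n]$ and zero on the rest of $(u,v)$ and on both of your flanks. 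Then both flanks are ``bad'' (average $0$), so you are in the (A) branch in either orientation, yet the forced companion interval ($\beta=v-L_n$, resp.\ $\gamma=u+L_n$ after your relabelling) is exactly the bump and has enormous average. Sub-averaging only produces \emph{some} subinterval of $(u,v)$ with average at most $\dashint_u^v f$, and its location cannot be prescribed, while the condition $\gamma-\beta=2L_n$ in (A) is rigid. The same objection defeats both of your fallbacks (the $u$-anchored rerun and the ``composite average'' algebra): in each of them the companion's position is forced and its average uncontrolled, and the difficulty is not confined to the regime $L_n\leq v-u<2L_n$ as you suggest.

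The paper's proof removes the difficulty by never manufacturing a new small-average interval at all: the small-average companion for (A) is the middle interval $(u,v)$ \emph{itself}. Concretely, it sets $\alpha=u-3L_n$, $\beta=u-2L_n$, $\gamma=v+2L_n$, $\delta=v+3L_n$ (admissible because $u-s\geq 4L_n$ and $t-v\geq 4L_n$) and tests both flank averages against the threshold $\tfrac12\big(\min\{f(s),f(t)\}+\dashint_u^v f\big)$. If both exceed it, (B) holds with middle interval $(u,v)$ and margin $\tfrac12\Lambda_k^n$; if one falls below it, that flank together with $(u,v)$ (whose average is also below the threshold) gives (A), the exact gap $2L_n$ being automatic because the flanks were placed at distance exactly $2L_n$ from $(u,v)$, and $\min\{f(s),f(t)\}$ minus the threshold is $\tfrac12\big(\min\{f(s),f(t)\}-\dashint_u^v f\big)\geq\tfrac12\Lambda_k^n$. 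Once you make $(u,v)$ the (A)-companion, the flank placement at distance $2L_n$ is forced and your entire case analysis disappears.
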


\begin{proof}
Let $ s < u < v < t $ be the points which the assumption of Lemma \ref{keylemma} gives for $ (n,k) $. We define
$$ \alpha = u - 3L_{n}, \quad \beta = u - 2L_{n}, \quad \gamma = v + 2L_{n}, \quad \delta = v + 3L_{n} $$
and look whether the inequality
$$ \min \Big\{ \dashint _{\alpha }^{\beta } f, \dashint _{\gamma }^{\delta } f \Big\} \geq \frac{1}{2} \Big( \min \{ f(s), f(t) \} + \dashint _{u}^{v} f \Big) $$
holds. If it holds, then (B) is satisfied. If it does not hold, then
$$ \dashint _{I} f \leq \frac{1}{2} \Big( \min \{ f(s), f(t) \} + \dashint _{u}^{v} f \Big) $$
where $ I $ is one of the intervals $ (\alpha , \beta ), (\gamma , \delta ) $. This inequality is fulfilled also for $ I = (u,v) $. Hence, (A) is satisfied for one of the choices
$$ \alpha ' = \alpha , \; \beta ' = \beta , \; \gamma ' = u, \; \delta ' = v, $$
$$ \alpha ' = u, \; \beta ' = v, \; \gamma ' = \gamma , \; \delta ' = \delta . $$
\end{proof}

\begin{definition}
We define
$$ \mathcal{A} = \big\{ (n,k) : \Lambda _{k}^{n} > 0 \textrm{ and (A) from Claim \ref{claimAB} is satisfied for } (n,k) \big\} , $$
$$ \mathcal{A}_{K}^{n} = \big\{ k \in \mathbbm{Z} : k = K \, \mathrm{mod} \, 200, (n,k) \in \mathcal{A} \big\} , \quad n \geq 0, \; 0 \leq K \leq 199, $$
$$ \mathcal{B} = \big\{ (n,k) : \Lambda _{k}^{n} > 0 \textrm{ and (B) from Claim \ref{claimAB} is satisfied for } (n,k) \big\} , $$
$$ \mathcal{B}_{K}^{n} = \big\{ k \in \mathbbm{Z} : k = K \, \mathrm{mod} \, 200, (n,k) \in \mathcal{B} \big\} , \quad n \geq 0, \; 0 \leq K \leq 199. $$
\end{definition}

\begin{claim} \label{claim10Vf}
For $ 0 \leq N \leq 9 $ and $ 0 \leq K \leq 199 $, we have
$$ \sum \Big\{ \Lambda _{k}^{n} : n = N \, \mathrm{mod} \, 10, k = K \, \mathrm{mod} \, 200 \Big\} \leq 10 \Var f. $$
\end{claim}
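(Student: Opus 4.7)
The plan is to split the sum according to which of the two alternatives from Claim~\ref{claimAB} holds, namely
$$ \sum _{\substack{n \equiv N \, (\mathrm{mod} \, 10) \\ k \equiv K \, (\mathrm{mod} \, 200)}} \Lambda _{k}^{n} \; = \; \sum _{\substack{n \equiv N \\ k \in \mathcal{A}_{K}^{n}}} \Lambda _{k}^{n} \; + \; \sum _{\substack{n \equiv N \\ k \in \mathcal{B}_{K}^{n} \setminus \mathcal{A}_{K}^{n}}} \Lambda _{k}^{n}. $$
Each of these two partial sums will be bounded by $ 5 \Var f $ using the dedicated estimates of Sections~\ref{sec:dealA} and~\ref{sec:dealB}. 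Note that after this reduction the $ \Lambda $-coefficients all come from configurations of type (A) (respectively type (B)), which is what these later sections are designed to handle.

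The purpose of the two congruence restrictions is geometric. At a fixed scale $ n $, the ``containing'' intervals $ [(k-50)L_{n}, (k+51)L_{n}] $ indexed by the class $ \mathcal{A}_{K}^{n} $ (or $ \mathcal{B}_{K}^{n} $) are pairwise disjoint, because they have length $ 101 L_{n} $ while consecutive centers are at distance at least $ 200 L_{n} $. Across scales, the restriction $ n \equiv N \, (\mathrm{mod} \, 10) $ forces $ L_{n_{1}}/L_{n_{2}} \geq 2^{10} $ for any two distinct $ n_{1} < n_{2} $ in the class, which provides the exponential scale separation needed to run a dyadic bookkeeping argument without overcounting.

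For the (A) part, each $ (n,k) \in \mathcal{A} $ supplies a configuration $ s < \alpha < \beta < \gamma < \delta < t $ in which endpoint values of $ f $ dominate two interior integral averages by at least $ \tfrac{1}{2} \Lambda _{k}^{n} $ each. The plan is to compare $ \Lambda _{k}^{n} $ with the variation of $ f $ on a controlled piece of $ [(k-50)L_{n}, (k+51)L_{n}] $, and then use the scale separation to show that each ``contribution'' to $ \Var f $ is charged only boundedly many times. The (B) part is analogous but uses a configuration $ \alpha < \beta < u < v < \gamma < \delta $ in which both outer averages dominate a middle average; here all quantities are averages over intervals of length comparable to $ L_{n} $, which is structurally more stable. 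Both cases will require an absorption/packing argument (which is exactly the content of Sections~\ref{sec:dealA} and~\ref{sec:dealB}), but they are genuinely different: endpoint values of $ f $ behave differently from averages, and this is presumably the reason for treating (A) and (B) separately.

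The principal obstacle will be the (A) case. Since $ f(s) $ and $ f(t) $ are pointwise values, a single large spike of $ f $ could simultaneously be compatible with many different $ (n,k) $; a careful argument is needed to ensure that, after the mod~$ 10 $ and mod~$ 200 $ filtering, each such spike is responsible for at most a bounded amount of $ \sum \Lambda _{k}^{n} $, so that the total is controlled by $ \Var f $. Once this is achieved for (A) with constant $ 5 $, and the easier (B) case produces the same bound, adding them yields the desired estimate $ 10 \Var f $.
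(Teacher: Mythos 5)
Your proposal is correct and follows essentially the same route as the paper: by Claim~\ref{claimAB} every index with $\Lambda_k^n>0$ lies in $\mathcal{A}_K^n\cup\mathcal{B}_K^n$, and the two partial sums are then bounded by $5\Var f$ each via Corollaries~\ref{corA} and~\ref{corB}, giving $10\Var f$. The lengthy commentary on how Sections~\ref{sec:dealA} and~\ref{sec:dealB} operate is not needed for this claim, which is just this one-line combination of the quoted results.
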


\begin{proof}
Using Claim \ref{claimAB} and Corollaries \ref{corA} and \ref{corB}, we can write
$$
\begin{aligned}
\sum \Big\{ \Lambda _{k}^{n} & : n = N \, \mathrm{mod} \, 10, k = K \, \mathrm{mod} \, 200 \Big\} \\
 & = \sum \Big\{ \Lambda _{k}^{n} : n = N \, \mathrm{mod} \, 10, k = K \, \mathrm{mod} \, 200 \textrm{ and } \Lambda _{k}^{n} > 0 \Big\} \\
 & = \sum \Big\{ \Lambda _{k}^{n} : n = N \, \mathrm{mod} \, 10, k \in \mathcal{A}_{K}^{n} \cup \mathcal{B}_{K}^{n} \Big\} \\
 & \leq 5 \Var f + 5 \Var f.
\end{aligned}
$$
\end{proof}

\section{Dealing with group $ \mathcal{A} $} \label{sec:dealA}

\begin{proposition} \label{propA}
Let $ 0 \leq N \leq 9 $ and $ 0 \leq K \leq 199 $. Let $ \eta \in \mathbbm{N} \cup \{ 0 \} $ and let $ n = 10\eta + N $. Then there is a system
$$ x_{1} < u_{1} < v_{1} < x_{2} < u_{2} < v_{2} < \dots < x_{m} < u_{m} < v_{m} < x_{m+1} $$
such that
$$ u_{1} - x_{1} \geq L_{n}, \quad v_{1} - u_{1} \geq L_{n}, \quad x_{2} - v_{1} \geq L_{n}, \quad \dots $$
and
$$ \sum _{i=1}^{m} \Big[ f(x_{i}) + f(x_{i+1}) - 2\dashint _{u_{i}}^{v_{i}} f \Big] \geq \frac{1}{5} \sum \Big\{ \Lambda _{k}^{o} : o = N \, \mathrm{mod} \, 10, o \leq n, k \in \mathcal{A}_{K}^{o} \Big\} . $$
\end{proposition}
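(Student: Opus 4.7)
The plan is to prove the proposition by induction on $\eta$, building the sequence one scale at a time. Write $n_\iota = 10\iota + N$, and let $S_\iota$ denote the sequence after incorporating the scales $N, N+10, \dots, n_\iota$. The inductive invariant is that $S_\iota$ satisfies the gap condition with minimum gap $L_{n_\iota}$ and has total contribution at least $\frac{1}{5}\sum_{\iota' \le \iota}\sum_{k \in \mathcal{A}_K^{n_{\iota'}}}\Lambda_k^{n_{\iota'}}$.

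For the base case $\iota = 0$, only scale $N$ is present, and the intervals $I_{N,k} = [(k-50)L_N, (k+51)L_N]$ for $k \in \mathcal{A}_K^N$ are pairwise separated by at least $99 L_N$, since consecutive admissible $k$ differ by $200$. I concatenate the triples $(s_k, (\alpha_k, \beta_k), t_k)$ supplied by Condition~(A), each contributing
$$f(s_k) + f(t_k) - 2\dashint_{\alpha_k}^{\beta_k}f \ge 2\min\{f(s_k), f(t_k)\} - 2\dashint_{\alpha_k}^{\beta_k}f \ge \Lambda_k^N,$$
and insert between two consecutive triples a bridge whose averaging interval lies near a Lebesgue point of $f$ in the gap at which $f$ takes a value no larger than $\min\{f(t_{k_j}), f(s_{k_{j+1}})\}$; this keeps the bridge contribution non-negative, and the total is at least $\sum_k \Lambda_k^N$.

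For the inductive step, each new interval $I_{n_\iota, k}$ has width $101 L_{n_\iota} < L_{n_{\iota-1}}$, so it contains at most one marker of $S_{\iota-1}$ and essentially lies inside a single ``slot''---either between consecutive markers or inside an existing averaging interval $(u_i, v_i)$. I insert the new scale-$n_\iota$ triple into its slot, splitting an old averaging interval if necessary, and invoke a dichotomy from Condition~(A): if $\dashint_{\beta_{n_\iota,k}}^{\gamma_{n_\iota,k}}f \le \min\{f(s), f(t)\} - \frac{1}{2}\Lambda_k^{n_\iota}$, the enlarged averaging interval $(\alpha, \delta)$ is still low and I use the single triple $(s, (\alpha, \delta), t)$; otherwise there is a high-value point $x^* \in (\beta, \gamma)$ with $f(x^*) \ge \min\{f(s), f(t)\} - \frac{1}{2}\Lambda_k^{n_\iota}$ (the buffer width $\gamma - \beta = 2 L_{n_\iota}$ leaves room to choose it), and I split into two sub-triples $(s, (\alpha, \beta), x^*)$ and $(x^*, (\gamma, \delta), t)$. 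Either way the new structure brings in at least $\Lambda_k^{n_\iota}$ to the running total.

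The main obstacle I expect is bounding the net effect of these insertions on pre-existing terms in the sum: splitting an old averaging interval $(u_i, v_i)$ around a new triple replaces one term by several, and when several new triples at scale $n_\iota$ land in the same old slot, or when the insertions cascade through many inductive layers, these modifications accumulate. The factor $\frac{1}{5}$ provides the slack needed to absorb the worst-case cumulative loss; the two essential inputs should be (i) the separation $\ge 99 L_o$ between distinct intervals at each scale, which bounds how many new triples compete for the same slot, and (ii) the exact buffer width $\gamma - \beta = 2 L_{n_\iota}$, which ensures the separating point $x^*$ respects the minimum-gap requirement at every level of the induction.
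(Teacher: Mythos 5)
Your inductive skeleton (induction on $\eta$, gap invariant $L_{n}$, cumulative lower bound) matches the paper's, but the two mechanisms you rely on do not work as stated. First, the base-case ``bridges'': between the windows of two consecutive $k$'s, the hypothesis (A) puts no constraint whatsoever on $f$, so there need not exist any point, let alone any admissible averaging interval, in the gap where $f$ is at most $\min \{ f(t_{k_{j}}), f(s_{k_{j+1}}) \}$ (take $f$ equal to a huge plateau on the whole gap). A bridge term $f(t_{k_{j}})+f(s_{k_{j+1}})-2\dashint _{u}^{v} f$ can then be arbitrarily negative, and these losses are not bounded by any multiple of $\sum \Lambda $, so the factor $\frac{1}{5}$ cannot absorb them. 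The paper never inserts such unguarded intervals: it adds the $k$'s one at a time, replacing a single point $y_{\iota }$ of the current system by $y < \alpha _{k} < \beta _{k} < y'$ with $y \in \{ y_{\iota }, s_{k} \}$, $y' \in \{ y_{\iota }, t_{k} \}$, so every averaging interval it ever creates is flanked on both sides by points at which $f$ provably exceeds that interval's average by $\frac{1}{2}\Lambda _{k}^{n}$. Second, the heart of the induction step --- what to do when a new window meets an \emph{old} averaging interval $(U_{I},V_{I})$ --- is exactly what you leave open. Shrinking or splitting $(U_{I},V_{I})$ can raise the subtracted average by an amount that has nothing to do with $\Lambda _{k}^{n}$ (a spike inside the old interval), so ``the $\frac{1}{5}$ absorbs the worst-case cumulative loss'' has no quantitative content; moreover an old averaging interval can be arbitrarily long, so arbitrarily many scale-$n$ windows can meet it, and the $99L_{n}$ separation does not bound how many compete for one slot. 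The paper's Lemma \ref{lemmUV} and Claim \ref{clAUV} are precisely the missing device: inside $(U_{I},V_{I})$ one selects a grid-positioned subinterval of length at least $5L_{n}$ whose average does not exceed $\dashint _{U_{I}}^{V_{I}} f$ and which is automatically at distance $\geq L_{n}$ from every window except at most one $k$; that one $k$ is then treated by the $\frac{1}{5}$--$\frac{4}{5}$ split, which either still yields a gain $\frac{1}{10}\Lambda _{k}^{n}$ or produces configuration (iii) using $s_{k}$ (or $t_{k}$) together with $(\alpha _{k},\beta _{k})$. With this there is no loss at all; the $\frac{1}{5}$ is the size of the guaranteed per-$k$ gain, not slack for losses.

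A further concrete flaw: your ``split'' option places a high point $x^{*} \in (\beta ,\gamma )$ between the averaging intervals $(\alpha ,\beta )$ and $(\gamma ,\delta )$, which requires $x^{*}-\beta \geq L_{n}$ and $\gamma -x^{*} \geq L_{n}$; since $\gamma -\beta = 2L_{n}$ exactly, this forces $x^{*}=\beta +L_{n}$, a single point, while $x^{*}$ was to be chosen by the size of $f(x^{*})$ --- so the move is generically inadmissible. The $2L_{n}$ buffer in (A) serves a different purpose in the paper: for any single old marker point, at least one of the two candidate intervals $(\alpha ,\beta )$, $(\gamma ,\delta )$ lies at distance $\geq L_{n}$ from it, which is what allows the choice of $(\alpha _{k},\beta _{k})$ compatible with the existing system. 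So the proposal needs the paper's selection machinery (or an equivalent substitute) both to avoid bridge-type losses and to control the interaction with old averaging intervals; as written, these are genuine gaps.
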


To prove the proposition, we provide a method how to construct such a system for $ \eta $ when a system for $ \eta - 1 $ is already constructed. We suppose that there is a system
$$ X_{1} < U_{1} < V_{1} < X_{2} < U_{2} < V_{2} < \dots < X_{M} < U_{M} < V_{M} < X_{M+1} $$
such that
$$ U_{1} - X_{1} \geq 1024L_{n}, \quad V_{1} - U_{1} \geq 1024L_{n}, \quad X_{2} - V_{1} \geq 1024L_{n}, \quad \dots $$
and
$$ \sum _{I=1}^{M} \Big[ f(X_{I}) + f(X_{I+1}) - 2\dashint _{U_{I}}^{V_{I}} f \Big] \geq \frac{1}{5} \sum \Big\{ \Lambda _{k}^{o} : o = N \, \mathrm{mod} \, 10, o \leq n - 10, k \in \mathcal{A}_{K}^{o} \Big\} $$
(for $ \eta - 1 = -1 $, we may consider $ M = 0 $ and $ X_{1} = $ anything). We want to construct a system
$$ x_{1} < u_{1} < v_{1} < x_{2} < u_{2} < v_{2} < \dots < x_{m} < u_{m} < v_{m} < x_{m+1} $$
such that
$$ u_{1} - x_{1} \geq L_{n}, \quad v_{1} - u_{1} \geq L_{n}, \quad x_{2} - v_{1} \geq L_{n}, \quad \dots $$
and
$$ \sum _{i=1}^{m} \Big[ f(x_{i}) + f(x_{i+1}) - 2\dashint _{u_{i}}^{v_{i}} f \Big] \geq \sum _{I=1}^{M} \Big[ f(X_{I}) + f(X_{I+1}) - 2\dashint _{U_{I}}^{V_{I}} f \Big] + \frac{1}{5} \sum _{k \in \mathcal{A}_{K}^{n}} \Lambda _{k}^{n}. $$

For every $ k \in \mathcal{A}_{K}^{n} $, let us consider such a system as in (A) from Claim \ref{claimAB}. If we put $ s_{k} = s, t_{k} = t $ and choose a $ (\alpha _{k}, \beta _{k}) \in \{ (\alpha , \beta ), (\gamma , \delta ) \} $, we obtain a system $ s_{k} < \alpha _{k} < \beta _{k} < t_{k} $ such that
$$ (k - 50)L_{n} \leq s_{k}, \quad t_{k} \leq (k + 51)L_{n}, $$
$$ \alpha _{k} - s_{k} \geq L_{n}, \quad \beta _{k} - \alpha _{k} \geq L_{n}, \quad t_{k} - \beta _{k} \geq L_{n} $$
and
$$ \min \{ f(s_{k}), f(t_{k}) \} - \dashint _{\alpha _{k}}^{\beta _{k}} f \geq \frac{1}{2} \Lambda _{k}^{n}. $$
We require from the choice of $ (\alpha _{k}, \beta _{k}) \in \{ (\alpha , \beta ), (\gamma , \delta ) \} $ that
$$ \mathrm{dist} \, (X_{I}, (\alpha _{k}, \beta _{k})) \geq L_{n}, \quad 1 \leq I \leq M+1. $$

For an interval $ (c,d) $ and a $ k \in \mathbbm{Z} $, we will denote
$$ (c,d) \perp k \quad \Leftrightarrow \quad \mathrm{dist} \, \big( (c,d), ((k - 50)L_{n}, (k + 51)L_{n}) \big) \geq L_{n}. $$

\begin{lemma} \label{lemmUV}
Let $ (U, V) $ be an interval of length greater than $ 210L_{n} $. Then there are a subinterval $ (U', V') $ and a $ k $ with $ k = K \, \mathrm{mod} \, 200 $ such that
\begin{itemize}
\item $ \dashint _{U'}^{V'} f \leq \dashint _{U}^{V} f $,
\item $ V' - U' \geq 5L_{n} $,
\item $ U' = (k-100)L_{n} $ or $ V' = (k+100)L_{n} $,
\item $ (k-105)L_{n} \leq U' $ and $ V' \leq (k+105)L_{n} $,
\item $ (U', V') \perp l $ for every $ l \neq k $ with $ l = K \, \mathrm{mod} \, 200 $.
\end{itemize}
Moreover, we can wish that $ \dashint _{U'}^{V'} f \geq \dashint _{U}^{V} f $ instead of the first property.
\end{lemma}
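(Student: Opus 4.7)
The plan is to partition $ (U,V) $ into finitely many admissible candidate subintervals, each of which (for an appropriate $ k $ with $ k = K \, \mathrm{mod} \, 200 $) satisfies every requirement of the lemma \emph{except} possibly the inequality on the averages. The convex combination identity
$$ \dashint _{U}^{V} f = \sum _{i} \frac{V_{i}' - U_{i}'}{V - U} \dashint _{U_{i}'}^{V_{i}'} f $$
will then produce simultaneously a piece with average at most $ \dashint _{U}^{V} f $ and a piece with average at least $ \dashint _{U}^{V} f $, yielding both the displayed form of the lemma and its moreover variant.

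The grid to use is the set of \emph{anchor points} $ a_{j} := (K - 100 + 200 j) L_{n} $, $ j \in \mathbbm{Z} $, which coincides with both $ \{ (k - 100) L_{n} : k = K \, \mathrm{mod} \, 200 \} $ and $ \{ (k + 100) L_{n} : k = K \, \mathrm{mod} \, 200 \} $; consecutive anchors are spaced $ 200 L_{n} $ apart. Since $ V - U > 210 L_{n} > 200 L_{n} $, the interval $ (U,V) $ contains at least one anchor; let these be $ a_{j_{1}}, a_{j_{1}+1}, \dots , a_{j_{1}+p-1} $ with $ p \geq 1 $. The $ p - 1 $ interior pieces $ (a_{j_{1}+i}, a_{j_{1}+i+1}) $ have length exactly $ 200 L_{n} $, with both endpoints of the required form for the same $ k = K + 200 (j_{1} + i) $, hence are immediately admissible. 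By maximality of the anchor set, $ a_{j_{1}-1} \leq U $ and $ V \leq a_{j_{1}+p} $, so each of the two end pieces has length at most $ 200 L_{n} $.

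For the left end piece $ (U, a_{j_{1}}) $ one takes $ k := K + 200 (j_{1} - 1) $, so that $ V' = a_{j_{1}} = (k + 100) L_{n} $; the containment $ (U, a_{j_{1}}) \subseteq [(k - 105) L_{n}, (k + 105) L_{n}] $ then reduces to $ U \geq a_{j_{1}} - 200 L_{n} = (k - 100) L_{n} $, which is the maximality bound itself. The right end piece is symmetric with $ k := K + 200 (j_{1} + p - 1) $. Once a candidate lies in $ [(k - 105) L_{n}, (k + 105) L_{n}] $, the relation $ (U', V') \perp l $ for every other $ l = K \, \mathrm{mod} \, 200 $ is automatic: the nearest such $ l $ is $ k \pm 200 $, and the corresponding set $ ((l - 50) L_{n}, (l + 51) L_{n}) $ sits at distance at least $ 44 L_{n} > L_{n} $ from $ [(k - 105) L_{n}, (k + 105) L_{n}] $.

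The only delicate point is an end piece of length strictly less than $ 5 L_{n} $. When $ p = 1 $, this cannot occur, because a short end piece would force its companion to have length greater than $ 205 L_{n} $ and hence to contain a further anchor, contradicting $ p = 1 $. When $ p \geq 2 $, one merges the short end piece with the adjacent interior piece, for instance replacing $ (U, a_{j_{1}}) $ and $ (a_{j_{1}}, a_{j_{1}+1}) $ by the single piece $ (U, a_{j_{1}+1}) $ of length in $ [200 L_{n}, 205 L_{n}) $, anchored at $ V' = a_{j_{1}+1} $; the required containment now follows from $ U \geq a_{j_{1}} - 5 L_{n} $, still inside the window. This is routine combinatorial bookkeeping; the substantive step is the averaging identity in the first paragraph, and the main (very mild) obstacle is organising the partition so that every piece is admissible before invoking it.
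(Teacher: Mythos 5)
Your proof is correct and is essentially the paper's own argument: the paper also partitions $(U,V)$ by the $200L_n$-spaced grid points congruent to $K\pm100$ modulo $200$ and selects a piece with average below (resp.\ above) the overall average, the only cosmetic difference being that the paper absorbs a too-short end piece automatically through its choice of the extreme indices $g,h$ (requiring $(g-105)L_n\le U<(g+95)L_n$ and symmetrically for $V$) rather than by an explicit merging step. Your "routine bookkeeping" does close up (for $p=2$ both end pieces cannot be short since their lengths sum to more than $10L_n$), so nothing substantive is missing.
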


\begin{proof}
Let $ g $ and $ h $ be the uniquely determined integers with $ g = h = K \, \mathrm{mod} \, 200 $ such that
$$ (g-105)L_{n} \leq U < (g+95)L_{n} \quad \textrm{and} \quad (h-95)L_{n} < V \leq (h+105)L_{n}. $$
We have $ g < h $ due to the assumption $ V - U > 210L_{n} $. The system
$$ U < (g+100)L_{n} < (g+300)L_{n} < \dots < (h-100)L_{n} < V $$
is a partition of $ (U, V) $ into intervals of length greater than $ 5L_{n} $. We choose a part the average value of $ f $ over which is less or equal to the average value of $ f $ over $ (U, V) $. (Respectively, greater or equal to the average value of $ f $ over $ (U, V) $ if we want to prove the moreover statement.) Such a subinterval $ (U', V') $ and the appropriate $ k $ with $ g \leq k \leq h $ and $ k = K \, \mathrm{mod} \, 200 $ have the required properties.
\end{proof}

\begin{claim} \label{clAUV}
Let $ (U, V) $ be an interval of length greater than $ 210L_{n} $. Then at least one of the following conditions is fulfilled:

{\rm (i)} There is an interval $ (c, d) \subset (U, V) $ with $ d - c \geq L_{n} $ such that $ (c,d) \perp l $ for every $ l \in \mathcal{A}_{K}^{n} $ and
$$ - \dashint _{c}^{d} f \geq - \dashint _{U}^{V} f. $$

{\rm (ii)} There are an interval $ (c, d) \subset (U, V) $ with $ d - c \geq L_{n} $ and a $ k \in \mathcal{A}_{K}^{n} $ such that $ (c,d) \perp l $ for every $ l \in \mathcal{A}_{K}^{n} \setminus \{ k \} $ and
$$ - \dashint _{c}^{d} f \geq - \dashint _{U}^{V} f + \frac{1}{10} \Lambda _{k}^{n}. $$

{\rm (iii)} There are a system
$$ c < d < y < c' < d' $$
with $ (c, d') \subset (U - 1023L_{n}, V + 1023L_{n}) $ and
$$ d - c \geq L_{n}, \quad y - d \geq L_{n}, \quad c' - y \geq L_{n}, \quad d' - c' \geq L_{n} $$
and a $ k \in \mathcal{A}_{K}^{n} $ such that $ (c,d') \perp l $ for every $ l \in \mathcal{A}_{K}^{n} \setminus \{ k \} $ and
$$ f(y) - \dashint _{c}^{d} f - \dashint _{c'}^{d'} f \geq - \dashint _{U}^{V} f + \frac{1}{10} \Lambda _{k}^{n}. $$
\end{claim}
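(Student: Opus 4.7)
The plan begins by applying Lemma~\ref{lemmUV} to $(U,V)$ (admissible because $V-U>210L_n$), producing a subinterval $(U',V')$ of length at least $5L_n$ and an integer $k\equiv K\pmod{200}$ with $\dashint_{U'}^{V'}f\leq\dashint_U^V f$, $(U',V')\subset[(k-105)L_n,(k+105)L_n]$, and $(U',V')\perp l$ for every other residue-$K$ integer $l$. If $k\notin\mathcal{A}_K^n$ then in particular $(U',V')\perp l$ for every $l\in\mathcal{A}_K^n$, and any length-$L_n$ subinterval of $(U',V')$ witnesses (i); so the remaining case $k\in\mathcal{A}_K^n$ carries all the substance.

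In that case I would invoke condition (A) of Claim~\ref{claimAB} for $(n,k)$, obtaining $s<\alpha<\beta<\gamma<\delta<t$ in $[(k-50)L_n,(k+51)L_n]$ with $\min\{f(s),f(t)\}\geq\max\{\dashint_\alpha^\beta f,\dashint_\gamma^\delta f\}+\tfrac12\Lambda_k^n$, and split $(U',V')$ into its \emph{core} $C:=(U',V')\cap((k-50)L_n,(k+51)L_n)$ and two \emph{shoulders} $S_\ell$, $S_r$. Two observations streamline the bookkeeping: (a) any length-$L_n$ subinterval of $C$ is automatically $\perp l$ for every $l\in\mathcal{A}_K^n\setminus\{k\}$, since distinct active zones at residue $K\bmod 200$ are at mutual distance at least $99L_n$; and (b) a length-$L_n$ subinterval of a shoulder, after trimming an $L_n$-buffer away from the active zone of $k$, is $\perp l$ for every $l\in\mathcal{A}_K^n$.

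Setting $a:=\dashint_U^V f$, the conclusion is reached by case analysis on the averages. A trimmed shoulder of average $\leq a$ yields (i). If both trimmed shoulder averages exceed $a$, then $\dashint_{U'}^{V'}f\leq a$ and the length-weighted identity drive the core average below $a$, and whenever this average drops below $a-\tfrac{1}{10}\Lambda_k^n$, a low length-$L_n$ piece of $C$ witnesses (ii). The inequality from (A) handles the rest: if $\min\{f(s),f(t)\}\leq a+\tfrac{2}{5}\Lambda_k^n$, then $\min\{\dashint_\alpha^\beta f,\dashint_\gamma^\delta f\}\leq a-\tfrac{1}{10}\Lambda_k^n$ and one of $(\alpha,\beta),(\gamma,\delta)$ gives (ii); otherwise $\min\{f(s),f(t)\}>a+\tfrac{2}{5}\Lambda_k^n$ supplies the height needed for (iii), where $y\in\{s,t\}$ is flanked by one of $(\alpha,\beta),(\gamma,\delta)$ on its active-zone side and by a second low-average interval on the other side — sought first in the opposite shoulder, and if that shoulder is still too large, in the buffer $(U-1023L_n,U)\cup(V,V+1023L_n)$ via a secondary application of Lemma~\ref{lemmUV}. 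I expect the main obstacle to be precisely this last branch: exhibiting a sufficiently low-average far flanking interval inside the extended buffer while respecting $(c,d')\perp l$ for the few other $l\in\mathcal{A}_K^n$ that may sit within $1023L_n$ of $k$, and tracking constants carefully enough that the $\tfrac{1}{10}\Lambda_k^n$ bonus survives each subtraction.
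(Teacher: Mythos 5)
Your opening (apply Lemma~\ref{lemmUV}, dispose of $k\notin\mathcal{A}_K^n$ via (i)) is the paper's, but the core/shoulder case analysis after that has genuine gaps. A smaller one first: in the branch $\min\{f(s),f(t)\}\leq a+\tfrac25\Lambda_k^n$ you want (ii) witnessed by $(\alpha,\beta)$ or $(\gamma,\delta)$; these intervals sit in $[(k-50)L_n,(k+51)L_n]$, and when $k$ is one of the two extreme residues produced by Lemma~\ref{lemmUV} this zone can protrude outside $(U,V)$ (e.g.\ $k=g$ with $U$ as far right as just below $(g+95)L_n$), so the requirement $(c,d)\subset(U,V)$ in (ii) can fail; your argument offers nothing for that configuration. (Also, ``both trimmed shoulders exceed $a$ forces the core below $a$'' ignores the trimmed-off $L_n$-buffers, though that is repairable by absorbing the buffers into the core.) The fatal gap is the one you flag yourself: in the last branch the flank of $y$ on the side away from the active zone must have average at most $a+\tfrac25\Lambda_k^n$, but there the shoulders of $(U',V')$ are only bounded \emph{below} by $a$, and the fallback of applying Lemma~\ref{lemmUV} to $(U-1023L_n,U)\cup(V,V+1023L_n)$ cannot work in principle: the average of $f$ over those buffers has no relation whatsoever to $a=\dashint_U^V f$, since $f$ is arbitrary (and possibly huge) there. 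So the plan as written does not close.

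The idea you are missing is that the flank never has to be sought outside $(U',V')$, and no comparison of $\min\{f(s),f(t)\}$ with $a$ is needed. Lemma~\ref{lemmUV} guarantees the normalization $U'=(k-100)L_n$ or $V'=(k+100)L_n$; in the first case the paper splits at $W=U'+\tfrac15(V'-U')$. Since $V'\leq(k+105)L_n$, one gets $W\leq(k-59)L_n\leq s_k-9L_n$, so the short piece $(U',W)$ lies entirely on the far side of the active zone from $s_k<\alpha_k<\beta_k$. The weighted-average dichotomy then gives: either $\dashint_{W}^{V'}f\leq\dashint_{U'}^{V'}f-\tfrac1{10}\Lambda_k^n$, which yields (ii) with $(c,d)=(W,V')\subset(U,V)$, or $\dashint_{U'}^{W}f\leq\dashint_{U'}^{V'}f+\tfrac25\Lambda_k^n$, and then $(c,d)=(U',W)$, $y=s_k$, $(c',d')=(\alpha_k,\beta_k)$ satisfies (iii) because $f(s_k)-\dashint_{\alpha_k}^{\beta_k}f\geq\tfrac12\Lambda_k^n$ and $\tfrac12-\tfrac25=\tfrac1{10}$ (the case $V'=(k+100)L_n$ is symmetric, with $y=t_k$ and the short piece $(W',V')$ on the right). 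This removes both your problematic branches at once: (ii) is always witnessed inside $(U,V)$, and the interval $(\alpha_k,\beta_k)$ is only ever used inside (iii), where the wide window $(U-1023L_n,V+1023L_n)$ accommodates it.
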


\begin{proof}
Let $ (U', V') $ and $ k $ be as in Lemma \ref{lemmUV}. If $ k \notin \mathcal{A}_{K}^{n} $, then (i) is fulfilled for $ (c,d) = (U', V') $. So, let us assume that $ k \in \mathcal{A}_{K}^{n} $ (and thus that we have $ s_{k} < \alpha _{k} < \beta _{k} < t_{k} $ for this $ k $).

Let us assume moreover that $ U' = (k-100)L_{n} $ (the procedure is similar when $ V' = (k+100)L_{n} $, see below). We put
$$ W = U' + \frac{1}{5}(V'-U'). $$
We have $ W = \frac{4}{5}U' + \frac{1}{5}V' \leq \frac{4}{5}(k-100)L_{n} + \frac{1}{5}(k+105)L_{n} = (k-59)L_{n} \leq s_{k} - 9L_{n} $ and $ \beta _{k} \leq t_{k} \leq (k + 51)L_{n} = U' + 151L_{n} $. In particular,
$$ s_{k} - W \geq L_{n} \quad \textrm{and} \quad \beta _{k} \leq V' + 1023L_{n}. $$
Further, we have
$$ \dashint _{U'}^{W} f \leq \dashint _{U'}^{V'} f + \frac{4}{5} \cdot \frac{1}{2} \Lambda _{k}^{n} \quad \textrm{or} \quad \dashint _{W}^{V'} f \leq \dashint _{U'}^{V'} f - \frac{1}{5} \cdot \frac{1}{2} \Lambda _{k}^{n}. $$
If the second inequality takes place, then (ii) is fulfilled for $ (c,d) = (W, V') $. If the first inequality takes place, then (iii) is fulfilled for
$$ (c,d) = (U', W), \quad y = s_{k}, \quad (c',d') = (\alpha _{k}, \beta _{k}). $$

So, the claim is proven under the assumption $ U' = (k-100)L_{n} $. The proof under the assumption $ V' = (k+100)L_{n} $ can be done in a similar way. If we denote
$$ W' = V' - \frac{1}{5}(V'-U'), $$
then one can show that (ii) is fulfilled for $ (c,d) = (U', W') $ or (iii) is fulfilled for
$$ (c,d) = (\alpha _{k}, \beta _{k}), \quad y = t_{k}, \quad (c',d') = (W',V'). $$
\end{proof}

\begin{claim} \label{clAS}
There is a subset $ \mathcal{S} \subset \mathcal{A}_{K}^{n} $ for which there exists a system
$$ y_{1} < c_{1} < d_{1} < y_{2} < c_{2} < d_{2} < \dots < y_{j} < c_{j} < d_{j} < y_{j+1} $$
such that
$$ c_{1} - y_{1} \geq L_{n}, \quad d_{1} - c_{1} \geq L_{n}, \quad y_{2} - d_{1} \geq L_{n}, \quad \dots , $$
\begin{eqnarray*}
l \in \mathcal{A}_{K}^{n} \setminus \mathcal{S} & \Rightarrow & (c_{i}, d_{i}) \perp l, \; 1 \leq i \leq j, \\
l \in \mathcal{A}_{K}^{n} \setminus \mathcal{S} & \Rightarrow & \mathrm{dist} \, (y_{i}, (\alpha _{l}, \beta _{l})) \geq L_{n}, \; 1 \leq i \leq j+1,
\end{eqnarray*}
and
$$ \sum _{i=1}^{j} \Big[ f(y_{i}) + f(y_{i+1}) - 2\dashint _{c_{i}}^{d_{i}} f \Big] \geq \sum _{I=1}^{M} \Big[ f(X_{I}) + f(X_{I+1}) - 2\dashint _{U_{I}}^{V_{I}} f \Big] + \frac{1}{5} \sum _{k \in \mathcal{S}} \Lambda _{k}^{n}. $$
\end{claim}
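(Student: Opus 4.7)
\emph{Overall approach.} The plan is to construct $\mathcal{S}$ and the new system iteratively from the old one, using Claim~\ref{clAUV} as the engine: each successful invocation of Claim~\ref{clAUV} on an ``active'' sub-interval can extract at most one element of $\mathcal{A}_K^n$ into $\mathcal{S}$, contributing a gain of $\tfrac{1}{5}\Lambda_k^n$ to the target sum. As a starting point, for each old interval $(U_I, V_I)$ of length $\geq 1024 L_n$ I will carve out a sequence of sub-pieces of length $> 210 L_n$ separated by ``cut-points'' chosen so that the cut-points avoid every region $((l-50)L_n, (l+51)L_n)$ for $l \in \mathcal{A}_K^n$; since those regions have width $101 L_n$ and are at least $99 L_n$ apart (consecutive admissible $l$'s differ by $200$), such a partition exists with room to spare. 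To each long sub-interval I then apply Claim~\ref{clAUV} once.

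\emph{The three cases.} When Claim~\ref{clAUV} returns case~(i) on a sub-interval $(U, V)$, I replace $(U, V)$ by the new piece $(c, d)$, which is $\perp l$ for every $l \in \mathcal{A}_K^n$; because $-\dashint_c^d f \geq -\dashint_U^V f$, the target sum does not decrease and no $k$ is added to $\mathcal{S}$. When it returns case~(ii), I perform the same substitution with the new $(c, d)$ that is $\perp l$ for all $l \ne k$, adjoin $k$ to $\mathcal{S}$, and gain $\tfrac{1}{5}\Lambda_k^n$ via $-2\dashint_c^d f \geq -2\dashint_U^V f + \tfrac{1}{5}\Lambda_k^n$. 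When it returns case~(iii), I replace $(U, V)$ by the two new pieces $(c, d), (c', d')$ separated by the new intermediate $y$-point (inserted into the $y$-sequence), adjoin $k$ to $\mathcal{S}$, and extract the same gain from $2[f(y) - \dashint_c^d f - \dashint_{c'}^{d'} f] + 2\dashint_U^V f \geq \tfrac{1}{5}\Lambda_k^n$. The final $y$-sequence consists of the original $X_I$'s, the cut-points between the sub-intervals, and the new intermediate points $y$ produced by case~(iii).

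\emph{Invariants and main obstacle.} The condition $(c_i, d_i) \perp l$ for $l \in \mathcal{A}_K^n \setminus \mathcal{S}$ is guaranteed piece-by-piece by the corresponding clause of Claim~\ref{clAUV}. The condition $\mathrm{dist}(y_i, (\alpha_l, \beta_l)) \geq L_n$ holds for the three kinds of $y$-points in the final system by construction: for the old $X_I$'s by the requirement we already imposed on the $(\alpha_l, \beta_l)$ when selecting them, for the cut-points by our choice to avoid all regions, and for the case-(iii) $y$'s because the candidates given by Claim~\ref{clAUV}(iii) are the points $s_k, t_k$, which are chosen to satisfy exactly this property. The step I expect to require the most care is the adjacency bookkeeping. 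Case~(iii) can produce pieces extending as much as $1023 L_n$ beyond the parent sub-interval, so I must check that such overshoots from neighbouring sub-intervals---or from neighbouring $(U_I, V_I)$'s---do not collide with each other or with the fixed $X_I$'s, and that the $\geq L_n$ spacing of the new system is preserved everywhere. The $1024 L_n$ spacing of the old system is precisely the cushion needed, but a careful ordering of the iteration and a careful choice of the partition cut-points will be essential to conclude.
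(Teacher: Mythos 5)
There is a genuine gap, and it comes from the one place where you deviate from what is needed: the subdivision of each $(U_I,V_I)$ into several sub-pieces separated by ``cut-points'' that you then insert into the $y$-sequence. Every $y$-point enters the target sum with a \emph{positive} sign, so each cut-point $W$ contributes a term $2f(W)$ that must help dominate the old contribution $f(X_I)+f(X_{I+1})-2\dashint_{U_I}^{V_I}f$, while the new sum now carries one average term per sub-piece instead of one per old interval. Concretely, if $(U_I,V_I)$ is split into two pieces with cut-point $W$ and both applications of Claim~\ref{clAUV} return case (i), the new contribution is $f(X_I)+f(X_{I+1})+2f(W)-2\dashint_{c_1}^{d_1}f-2\dashint_{c_2}^{d_2}f$, and Claim~\ref{clAUV} only gives $\dashint_{c_1}^{d_1}f\leq\dashint_{U_I}^{W}f$, $\dashint_{c_2}^{d_2}f\leq\dashint_{W}^{V_I}f$; to beat the old contribution you would need $f(W)\geq\dashint_{c_1}^{d_1}f+\dashint_{c_2}^{d_2}f-\dashint_{U_I}^{V_I}f$, i.e.\ essentially $f(W)$ at least as large as the sub-averages. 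Nothing in your choice of cut-points (which only avoid the sets $((l-50)L_n,(l+51)L_n)$) controls $f(W)$ from below; take $f$ close to a positive constant on $(U_I,V_I)$ with $f(W)=0$ and the comparison fails. Your ``the target sum does not decrease'' step silently assumes the comparison is piece-by-piece with the same neighbouring $y$-values, which is exactly what the cut-points destroy. The only intermediate $y$-points one can afford are the case-(iii) points $y=s_k$ or $t_k$, because there Claim~\ref{clAUV} builds the value $f(y)$ into its inequality.

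The fix is simply to drop the subdivision: the paper applies Claim~\ref{clAUV} \emph{once} to each $(U_I,V_I)$ (whose length $\geq 1024L_n>210L_n$ already qualifies), keeps the $X_I$'s as the $y$-points except for the case-(iii) insertions, rewrites (i)--(iii) in the form $f(X_I)+f(X_{I+1})-2\dashint\cdots$, and lets $\mathcal{S}$ be the set of $k$'s appearing in (ii) or (iii). Note that Claim~\ref{clAS} does not require $\mathcal{S}$ to exhaust $\mathcal{A}_K^n$; the enlargement of $\mathcal{S}$ one element at a time is a separate argument carried out after the claim, so the extra applications you were trying to make room for buy nothing here. Dropping the subdivision also dissolves your acknowledged ``adjacency'' obstacle: with one application per $(U_I,V_I)$, the produced system lies in $(U_I-1023L_n,V_I+1023L_n)$, which the $1024L_n$ spacing keeps strictly between $X_I$ and $X_{I+1}$ with margin $L_n$; whereas with interior sub-pieces of length barely over $210L_n$ there is no analogous cushion, a case-(iii) output can overrun the neighbouring sub-piece and its produced interval or cut-point, and the required ordering and $L_n$-separation of the final system can genuinely fail -- a problem your proposal names but does not resolve.
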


\begin{proof}
We apply Claim \ref{clAUV} on the intervals $ (U_{I}, V_{I}), 1 \leq I \leq M $. We write the inequalities from Claim \ref{clAUV} in a form more familiar for our purposes:
\begin{eqnarray*}
 & \textrm{(i)} & f(X_{I}) + f(X_{I+1}) - 2\dashint _{c}^{d} f \geq f(X_{I}) + f(X_{I+1}) - 2\dashint _{U_{I}}^{V_{I}} f, \\
 & \textrm{(ii)} & f(X_{I}) + f(X_{I+1}) - 2\dashint _{c}^{d} f \geq f(X_{I}) + f(X_{I+1}) - 2\dashint _{U_{I}}^{V_{I}} f + \frac{1}{5} \Lambda _{k}^{n}, \\
 & \textrm{(iii)} & \Big[ f(X_{I}) + f(y) - 2\dashint _{c}^{d} f \Big] + \Big[ f(y) + f(X_{I+1}) - 2\dashint _{c'}^{d'} f \Big] \\
 & & \quad \quad \geq f(X_{I}) + f(X_{I+1}) - 2\dashint _{U_{I}}^{V_{I}} f + \frac{1}{5} \Lambda _{k}^{n}.
\end{eqnarray*}
We define $ \mathcal{S} $ as the set of those $ k $'s which appeared in (ii) or (iii) for some $ I $. One can construct the desired system by inserting the systems which we obtained from Claim \ref{clAUV} between $ X_{I} $'s.
\end{proof}

To finish the proof of Proposition \ref{propA}, it remains to show that, if a proper subset $ \mathcal{S} \subset \mathcal{A}_{K}^{n} $ has such a system as in Claim \ref{clAS}, then $ \mathcal{S} \cup \{ k \} $ where $ k \in \mathcal{A}_{K}^{n} \setminus \mathcal{S} $ has also such a system.

So, let $ \mathcal{S} $ and
$$ y_{1} < c_{1} < d_{1} < y_{2} < c_{2} < d_{2} < \dots < y_{j} < c_{j} < d_{j} < y_{j+1} $$
be as in Claim \ref{clAS} and let $ k \in \mathcal{A}_{K}^{n} \setminus \mathcal{S} $. Let $ \iota $ be the index such that $ y_{\iota } $ belongs to the connected component of $ \mathbbm{R} \setminus \bigcup _{i=1}^{j} [c_{i},d_{i}] $ which covers $ ((k - 50)L_{n}, (k + 51)L_{n}) $. We intend to obtain the desired system for $ \mathcal{S} \cup \{ k \} $ by replacing $ y_{\iota } $ with
$$ y < \alpha _{k} < \beta _{k} < y' $$
where
$$ y = \left\{\begin{array}{ll} y_{\iota }, & \quad y_{\iota } \leq \alpha _{k} - L_{n} \textrm{ and } f(y_{\iota }) \geq f(s_{k}), \\
s_{k}, & \quad \textrm{otherwise}, \\
\end{array} \right. $$
$$ y' = \left\{\begin{array}{ll} y_{\iota }, & \quad y_{\iota } \geq \beta _{k} + L_{n} \textrm{ and } f(y_{\iota }) \geq f(t_{k}), \\
t_{k}, & \quad \textrm{otherwise}. \\
\end{array} \right. $$
For every $ l \neq k $ with $ l = K \, \mathrm{mod} \, 200 $, we have
$$ \mathrm{dist} \, \big( ((k - 50)L_{n}, (k + 51)L_{n}), ((l - 50)L_{n}, (l + 51)L_{n}) \big) \geq 99L_{n} \geq L_{n}, $$
and thus
\begin{eqnarray*}
l \in \mathcal{A}_{K}^{n} \setminus (\mathcal{S} \cup \{ k \} ) & \Rightarrow & (\alpha _{k}, \beta _{k}) \perp l, \\
l \in \mathcal{A}_{K}^{n} \setminus (\mathcal{S} \cup \{ k \} ) & \Rightarrow & \mathrm{dist} \, (y, (\alpha _{l}, \beta _{l})) \geq L_{n} \textrm{ and } \mathrm{dist} \, (y', (\alpha _{l}, \beta _{l})) \geq L_{n}.
\end{eqnarray*}

Let us prove the inequality for the modified system. We note that, if $ j \geq 1 $, then the left side of the inequality for the original system can be written in the form
$$ f(y_{1}) - 2\dashint _{c_{1}}^{d_{1}} f + 2f(y_{2}) - 2\dashint _{c_{2}}^{d_{2}} f + \dots + 2f(y_{j}) - 2\dashint _{c_{j}}^{d_{j}} f + f(y_{j+1}). $$
We need to show that the modification of the system increased this quantity at least by $ \frac{1}{5} \Lambda _{k}^{n} $. What we need to show is
\begin{flalign*}
 & \textrm{when $ 1 < \iota < j + 1 $:} & 2f(y) - 2\dashint _{\alpha _{k}}^{\beta _{k}} f + 2f(y') & \geq 2f(y_{\iota }) + \frac{1}{5} \Lambda _{k}^{n}, & \\
 & \textrm{when $ 1 = \iota < j + 1 $:} & f(y) - 2\dashint _{\alpha _{k}}^{\beta _{k}} f + 2f(y') & \geq f(y_{\iota }) + \frac{1}{5} \Lambda _{k}^{n}, & \\
 & \textrm{when $ 1 < \iota = j + 1 $:} & 2f(y) - 2\dashint _{\alpha _{k}}^{\beta _{k}} f + f(y') & \geq f(y_{\iota }) + \frac{1}{5} \Lambda _{k}^{n}, & \\
 & \textrm{when $ 1 = \iota = j + 1 $:} & f(y) - 2\dashint _{\alpha _{k}}^{\beta _{k}} f + f(y') & \geq \frac{1}{5} \Lambda _{k}^{n}. &
\end{flalign*}
These inequalities, even with $ 1 $ instead of $ \frac{1}{5} $, follow from
$$ \quad f(y) - \dashint _{\alpha _{k}}^{\beta _{k}} f \geq \frac{1}{2} \Lambda _{k}^{n}, \quad f(y') - \dashint _{\alpha _{k}}^{\beta _{k}} f \geq \frac{1}{2} \Lambda _{k}^{n}, $$
$$ f(y) \geq f(y_{\iota }) \quad \textrm{or} \quad f(y') \geq f(y_{\iota }) $$
($ f(y) \geq f(y_{\iota }) $ is implied by $ y_{\iota } \leq \alpha _{k} - L_{n} $ and $ f(y') \geq f(y_{\iota }) $ is implied by $ y_{\iota } \geq \beta _{k} + L_{n} $).

The proof of Proposition \ref{propA} is completed.

\begin{corollary} \label{corA}
For $ 0 \leq N \leq 9 $ and $ 0 \leq K \leq 199 $, we have
$$ \sum \Big\{ \Lambda _{k}^{n} : n = N \, \mathrm{mod} \, 10, k \in \mathcal{A}_{K}^{n} \Big\} \leq 5 \Var f. $$
\end{corollary}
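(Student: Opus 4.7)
The plan is to show that the quantity
$$ \sum_{i=1}^{m} \Bigl[ f(x_i) + f(x_{i+1}) - 2\dashint_{u_i}^{v_i} f \Bigr] $$
appearing on the left side of the inequality in Proposition~\ref{propA} is bounded by $\Var f$, and then to let $\eta \to \infty$. Since by assumption only finitely many $\Lambda_k^n$ are positive, for sufficiently large $\eta$ the set $\{\Lambda_k^o : o = N \bmod 10,\ o \leq 10\eta+N,\ k \in \mathcal{A}_K^o\}$ already contains every positive $\Lambda_k^o$ with $o = N \bmod 10$ and $k \in \mathcal{A}_K^o$, so the stated corollary will follow immediately from the $\tfrac{1}{5}$-factor in Proposition~\ref{propA}.

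The key computation is to rewrite, for each $i$,
$$ f(x_i) + f(x_{i+1}) - 2\dashint_{u_i}^{v_i} f \;=\; \dashint_{u_i}^{v_i}\bigl[f(x_i) - f(y)\bigr]\,dy \;+\; \dashint_{u_i}^{v_i}\bigl[f(x_{i+1}) - f(y)\bigr]\,dy. $$
Every $y \in (u_i, v_i)$ satisfies $x_i < y < x_{i+1}$, and from the definition of total variation (using the trivial two-point partition) one has $f(x_i) - f(y) \leq \Var(f;[x_i,y])$ and $f(x_{i+1}) - f(y) \leq \Var(f;[y,x_{i+1}])$. Adding these and using additivity of variation over abutting intervals gives the $y$-independent bound $\Var(f;[x_i,x_{i+1}])$, so
$$ f(x_i) + f(x_{i+1}) - 2\dashint_{u_i}^{v_i} f \;\leq\; \Var\bigl(f;[x_i,x_{i+1}]\bigr). $$

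Summing over $i$, the intervals $[x_i,x_{i+1}]$ meet only at endpoints, so by additivity of variation the total is at most $\Var(f;[x_1,x_{m+1}]) \leq \Var f$. Combining with the conclusion of Proposition~\ref{propA} yields
$$ \tfrac{1}{5}\sum\bigl\{\Lambda_k^o : o = N\bmod 10,\ o \leq 10\eta+N,\ k \in \mathcal{A}_K^o\bigr\} \;\leq\; \Var f, $$
and the choice of $\eta$ above turns this into the desired estimate with constant $5$. There is no real obstacle here: the only point requiring a moment's thought is the observation that averaging in $y$ over $(u_i,v_i)$ is what turns the constant $\dashint_{u_i}^{v_i}f$ into a telescope-friendly variation bound; everything else is Proposition~\ref{propA} and the elementary properties of $\Var$.
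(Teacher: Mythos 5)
Your proof is correct and follows essentially the same route as the paper: combine Proposition~\ref{propA} with a bound of the left-hand sum by $\Var f$ and take $\eta$ large enough to capture all positive $\Lambda_k^o$. The only cosmetic difference is that the paper picks a point $w_i\in(u_i,v_i)$ with $f(w_i)\leq \dashint_{u_i}^{v_i} f$ and uses the partition $x_1<w_1<x_2<\dots$, whereas you average the pointwise variation bounds in $y$; both yield the same estimate.
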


\begin{proof}
Let $ \eta $ be large enough such that
$$ \mathcal{A}_{K}^{o} \neq \varnothing \quad \Rightarrow \quad o \leq n $$
where $ n = 10\eta + N $. Let
$$ x_{1} < u_{1} < v_{1} < x_{2} < u_{2} < v_{2} < \dots < x_{m} < u_{m} < v_{m} < x_{m+1} $$
be the system which Proposition \ref{propA} gives for $ N, K $ and $ \eta $. For $ 1 \leq i \leq m $, let $ w_{i} \in (u_{i},v_{i}) $ be chosen so that
$$ f(w_{i}) \leq \dashint _{u_{i}}^{v_{i}} f. $$
We compute
\begin{eqnarray*}
\Var f & \geq & \sum _{i=1}^{m} \Big[ |f(w_{i}) - f(x_{i})| + |f(x_{i+1}) - f(w_{i})| \Big] \\ 
 & \geq & \sum _{i=1}^{m} \Big[ f(x_{i}) - f(w_{i}) + f(x_{i+1}) - f(w_{i}) \Big] \\
 & \geq & \sum _{i=1}^{m} \Big[ f(x_{i}) + f(x_{i+1}) - 2\dashint _{u_{i}}^{v_{i}} f \Big] \\
 & \geq & \frac{1}{5} \sum \Big\{ \Lambda _{k}^{o} : o = N \, \mathrm{mod} \, 10, o \leq n, k \in \mathcal{A}_{K}^{o} \Big\} \\
 & = & \frac{1}{5} \sum \Big\{ \Lambda _{k}^{o} : o = N \, \mathrm{mod} \, 10, k \in \mathcal{A}_{K}^{o} \Big\} .
\end{eqnarray*}
\end{proof}

\section{Dealing with group $ \mathcal{B} $} \label{sec:dealB}

\begin{proposition} \label{propB}
Let $ 0 \leq N \leq 9 $ and $ 0 \leq K \leq 199 $. Let $ \eta \in \mathbbm{N} \cup \{ 0 \} $ and let $ n = 10\eta + N $. Then there is a system
$$ \varphi _{1} < \psi _{1} < s_{1} < t_{1} < \varphi _{2} < \psi _{2} < s_{2} < t_{2} < \dots < s_{m} < t_{m} < \varphi _{m+1} < \psi _{m+1} $$
such that
$$ \psi _{1} - \varphi _{1} \geq L_{n}, \quad s_{1} - \psi _{1} \geq L_{n}, \quad t_{1} - s_{1} \geq L_{n}, \quad \varphi _{2} - t_{1} \geq L_{n}, \quad \dots $$
and
$$ \sum _{i=1}^{m} \Big[ \dashint _{\varphi _{i}}^{\psi _{i}} f + \dashint _{\varphi _{i+1}}^{\psi _{i+1}} f - 2\dashint _{s_{i}}^{t_{i}} f \Big] \geq \frac{1}{5} \sum \Big\{ \Lambda _{k}^{o} : o = N \, \mathrm{mod} \, 10, o \leq n, k \in \mathcal{B}_{K}^{o} \Big\} . $$
\end{proposition}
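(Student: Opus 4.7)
The plan is to mirror the inductive architecture of Proposition~\ref{propA}: for fixed $N, K$ we induct on $\eta$, assuming we have a valid system at level $n-10$ (hence with all spacings $\geq 1024 L_n = L_{n-10}$) and refining it to absorb $\frac{1}{5}\sum_{k\in\mathcal{B}_K^n}\Lambda_k^n$ of extra gain at level $n$. The substantive difference with Section~\ref{sec:dealA} is that in (B) the ``boundary'' quantities are the averages $\dashint_\alpha^\beta f, \dashint_\gamma^\delta f$ rather than point values, so the refined system has intervals $(\varphi_i,\psi_i)$ in place of the points $x_i$; everything else runs in parallel.

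For each $k\in\mathcal{B}_K^n$, (B) supplies $\alpha_k<\beta_k<u_k<v_k<\gamma_k<\delta_k$. Lemma~\ref{lemmUV} is reused verbatim to find, inside any long low interval of the prior system, a grid-aligned sub-interval perpendicular to all but at most one bad zone. The analog of Claim~\ref{clAUV} would then read: given $(U,V)$ long enough, there is either (i) a sub-interval $(c,d)\subset(U,V)$ perpendicular to every $l\in\mathcal{B}_K^n$ with $-\dashint_c^d f\geq-\dashint_U^V f$; or (ii) a sub-interval with the gain $-\dashint_c^d f\geq-\dashint_U^V f+\frac{1}{10}\Lambda_k^n$ for some $k$; or (iii) a splitting $c<d<\varphi<\psi<c'<d'$, where $(\varphi,\psi)$ is one of $(\alpha_k,\beta_k),(\gamma_k,\delta_k)$ installed as a new high interval and $\dashint_\varphi^\psi f-\dashint_c^d f-\dashint_{c'}^{d'} f\geq-\dashint_U^V f+\frac{1}{10}\Lambda_k^n$. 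The proof is the same dichotomy as in Claim~\ref{clAUV}: anchor $(U',V')$ to $U'=(k-100)L_n$ or $V'=(k+100)L_n$, set $W=U'+\tfrac{1}{5}(V'-U')$, and pigeonhole whether the desired improvement already hides in $(W,V')$ (case ii) or the high interval must be installed (case iii). The inequality driving case (iii) is now the half $\dashint_\alpha^\beta f-\dashint_u^v f\geq\tfrac{1}{2}\Lambda_k^n$ of (B), in place of $f(s_k)-\dashint_{\alpha_k}^{\beta_k}f\geq\tfrac{1}{2}\Lambda_k^n$ used in Section~\ref{sec:dealA}.

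Assembling these outputs over $I=1,\dots,M$ yields an analog of Claim~\ref{clAS}: a subset $\mathcal{S}\subseteq\mathcal{B}_K^n$ together with a candidate output system meeting the target inequality with the extra term $\frac{1}{5}\sum_{k\in\mathcal{S}}\Lambda_k^n$. The closing step, as at the end of Section~\ref{sec:dealA}, is to enlarge $\mathcal{S}$ by one index at a time: for $k\in\mathcal{B}_K^n\setminus\mathcal{S}$ locate the unique high interval $(\varphi_\iota,\psi_\iota)$ of the current system whose surrounding gap contains $((k-50)L_n,(k+51)L_n)$ and replace it by a triple $(\varphi,\psi)<(u_k,v_k)<(\varphi',\psi')$, where $(\varphi,\psi)$ and $(\varphi',\psi')$ are selected from $\{(\varphi_\iota,\psi_\iota),(\alpha_k,\beta_k),(\gamma_k,\delta_k)\}$ by taking the interval with the larger average on each side (the analog of choosing between $y_\iota$ and $s_k$ in Prop~A). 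The four-case split on whether $\iota$ is interior or extreme is formally identical to the one at the end of Section~\ref{sec:dealA}, and each inequality follows immediately from the two halves $\dashint_\alpha^\beta f-\dashint_u^v f\geq\tfrac{1}{2}\Lambda_k^n$ and $\dashint_\gamma^\delta f-\dashint_u^v f\geq\tfrac{1}{2}\Lambda_k^n$ of (B). The terminal corollary analog of Corollary~\ref{corA} is again obtained by choosing $w_i\in(s_i,t_i)$ with $f(w_i)\leq\dashint_{s_i}^{t_i}f$ and picking $z_i\in(\varphi_i,\psi_i)$ with $f(z_i)\geq\dashint_{\varphi_i}^{\psi_i}f$, then telescoping against $\Var f$.

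\textbf{Expected main obstacle.} The principal extra bookkeeping compared to Section~\ref{sec:dealA} is that a (B)-block contains three non-trivial intervals (two high, one low) rather than the (A)-block's two endpoints bracketing a single low interval, and the high endpoints of the refined system are themselves intervals rather than points. Consequently, every insertion---in case (iii) of the $\mathcal{A}$-analog claim, and in the $\mathcal{S}\to\mathcal{S}\cup\{k\}$ step---requires a pigeonhole choice of which of $(\alpha_k,\beta_k),(\gamma_k,\delta_k)$ to install where, coordinated with an average comparison against the pre-existing $(\varphi_\iota,\psi_\iota)$, and a verification of spacing and perpendicularity for the enlarged collection of intervals. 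Once these dichotomies are organized so that the flanking intervals always have larger average than the inserted low interval, the arithmetic that delivers the advertised $\tfrac{1}{5}\Lambda_k^n$ gain is essentially formal.
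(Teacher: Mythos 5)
There is a genuine gap: your construction never deals with the \emph{high} intervals $(\Phi_I,\Psi_I)$ of the inductive hypothesis. You apply the Lemma~\ref{lemmUV}--type refinement only ``inside any long low interval of the prior system,'' i.e.\ to the $(S_I,T_I)$'s (this reproduces Claim~\ref{clBST}), and you plan to carry the old high intervals over unchanged, replacing one of them only later in the $\mathcal{S}\to\mathcal{S}\cup\{k\}$ step. But the old high intervals have length $\geq 1024L_n$, so a zone $((k-50)L_n,(k+51)L_n)$ with $k\in\mathcal{B}_K^n\setminus\mathcal{S}$ (length $101L_n$) can lie entirely inside some $(\Phi_I,\Psi_I)$, or straddle it. Then your closing move --- ``locate the unique high interval $(\varphi_\iota,\psi_\iota)$ whose surrounding gap contains $((k-50)L_n,(k+51)L_n)$'' --- has no candidate: the block $\alpha_k<\beta_k<u_k<v_k<\gamma_k<\delta_k$ sits inside that zone, hence inside or across an existing interval of the system, and no insertion preserving the ordering, the $L_n$-spacings and the required inequality is possible. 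This is exactly what the paper isolates as the main difference between Propositions~\ref{propA} and~\ref{propB}: in Proposition~\ref{propA} the carried-over objects are the points $X_I$, which can always be pushed $L_n$-away from a single short interval $(\alpha_k,\beta_k)$ (using $\gamma-\beta=2L_n$), whereas in Proposition~\ref{propB} the carried-over objects are long intervals and must themselves be shrunk.

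The missing ingredient is the analogue of Claim~\ref{clBPhiPsi}: each $(\Phi_I,\Psi_I)$ must be replaced by a subinterval $(\mu,\nu)$ of length $\geq L_n$ that is $\perp l$ for every untreated $l\in\mathcal{B}_K^n$, with $\dashint_{\mu}^{\nu}f\geq\dashint_{\Phi_I}^{\Psi_I}f$ --- this uses the ``moreover'' ($\geq$) variant of Lemma~\ref{lemmUV}, which you never invoke --- together with gain cases (ii*)/(iii*) when the exceptional grid index $k$ lies in $\mathcal{B}_K^n$. A further point your sketch does not see: the two extreme intervals $I=1$ and $I=M+1$ enter the telescoped sum with coefficient $1$ instead of $2$, so for them the estimates must be proved in a strengthened form (gain $\tfrac15\Lambda_k^n$ rather than $\tfrac1{10}\Lambda_k^n$, and in case (iii*) an asymmetric pair of inequalities), which is why the paper states Claim~\ref{clBPhiPsi} in two versions. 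Everything else in your outline (the low-interval claim, the assembly into Claim~\ref{clBT}, the one-index-at-a-time enlargement, and the Corollary~\ref{corB} telescoping) does match the paper, but without the high-interval refinement step the induction does not close.
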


To prove the proposition, we provide a method how to construct such a system for $ \eta $ when a system for $ \eta - 1 $ is already constructed. We suppose that there is a system
$$ \Phi _{1} < \Psi _{1} < S_{1} < T_{1} < \Phi _{2} < \Psi _{2} < S_{2} < T_{2} < \dots < S_{M} < T_{M} < \Phi _{M+1} < \Psi _{M+1} $$
such that
$$ \Psi _{1} - \Phi _{1} \geq 1024L_{n}, \quad \hspace{-4pt} S_{1} - \Psi _{1} \geq 1024L_{n}, \quad \hspace{-4pt} T_{1} - S_{1} \geq 1024L_{n}, \quad \hspace{-4pt} \Phi _{2} - T_{1} \geq 1024L_{n}, \quad \hspace{-4pt} \dots $$
and
$$ \sum _{I=1}^{M} \Big[ \dashint _{\Phi _{I}}^{\Psi _{I}} f + \dashint _{\Phi _{I+1}}^{\Psi _{I+1}} f - 2\dashint _{S_{I}}^{T_{I}} f \Big] \geq \frac{1}{5} \sum \Big\{ \Lambda _{k}^{o} : o = N \, \mathrm{mod} \, 10, o \leq n - 10, k \in \mathcal{B}_{K}^{o} \Big\} $$
(for $ \eta - 1 = -1 $, we may consider $ M = 0, \Phi _{1} = $ anything and $ \Psi _{1} = \Phi _{1} + 1024L_{n} $). We want to construct a system
$$ \varphi _{1} < \psi _{1} < s_{1} < t_{1} < \varphi _{2} < \psi _{2} < s_{2} < t_{2} < \dots < s_{m} < t_{m} < \varphi _{m+1} < \psi _{m+1} $$
such that
$$ \psi _{1} - \varphi _{1} \geq L_{n}, \quad s_{1} - \psi _{1} \geq L_{n}, \quad t_{1} - s_{1} \geq L_{n}, \quad \varphi _{2} - t_{1} \geq L_{n}, \quad \dots $$
and
$$ \sum _{i=1}^{m} \Big[ \dashint _{\varphi _{i}}^{\psi _{i}} f + \dashint _{\varphi _{i+1}}^{\psi _{i+1}} f - 2\dashint _{s_{i}}^{t_{i}} f \Big] \geq \sum _{I=1}^{M} \Big[ \dashint _{\Phi _{I}}^{\Psi _{I}} f + \dashint _{\Phi _{I+1}}^{\Psi _{I+1}} f - 2\dashint _{S_{I}}^{T_{I}} f \Big] + \frac{1}{5} \sum _{k \in \mathcal{B}_{K}^{n}} \Lambda _{k}^{n}. $$

For every $ k \in \mathcal{B}_{K}^{n} $, let us consider such a system as in (B) from Claim \ref{claimAB}. We obtain a system $ \alpha _{k} < \beta _{k} < u_{k} < v_{k} < \gamma _{k} < \delta _{k} $ such that
$$ (k - 50)L_{n} \leq \alpha _{k} , \quad \delta _{k} \leq (k + 51)L_{n}, $$
$$ \beta _{k} - \alpha _{k} \geq L_{n}, \quad u_{k} - \beta _{k} \geq L_{n}, \quad v_{k} - u_{k} \geq L_{n}, \quad \gamma _{k} - v_{k} \geq L_{n}, \quad \delta _{k} - \gamma _{k} \geq L_{n} $$
and
$$ \min \Big\{ \dashint _{\alpha _{k}}^{\beta _{k}} f, \dashint _{\gamma _{k}}^{\delta _{k}} f \Big\} - \dashint _{u_{k}}^{v_{k}} f \geq \frac{1}{2} \Lambda _{k}^{n}. $$

Again, for an interval $ (c,d) $ and a $ k \in \mathbbm{Z} $, we denote
$$ (c,d) \perp k \quad \Leftrightarrow \quad \mathrm{dist} \, \big( (c,d), ((k - 50)L_{n}, (k + 51)L_{n}) \big) \geq L_{n}. $$

\begin{claim} \label{clBST}
Let $ (S, T) $ be an interval of length greater than $ 210L_{n} $. Then at least one of the following conditions is fulfilled:

{\rm (i)} There is an interval $ (c, d) \subset (S, T) $ with $ d - c \geq L_{n} $ such that $ (c,d) \perp l $ for every $ l \in \mathcal{B}_{K}^{n} $ and
$$ - \dashint _{c}^{d} f \geq - \dashint _{S}^{T} f. $$

{\rm (ii)} There are an interval $ (c, d) \subset (S, T) $ with $ d - c \geq L_{n} $ and a $ k \in \mathcal{B}_{K}^{n} $ such that $ (c,d) \perp l $ for every $ l \in \mathcal{B}_{K}^{n} \setminus \{ k \} $ and
$$ - \dashint _{c}^{d} f \geq - \dashint _{S}^{T} f + \frac{1}{10} \Lambda _{k}^{n}. $$

{\rm (iii)} There are a system
$$ c < d < \mu < \nu < c' < d' $$
with $ (c, d') \subset (S - 500L_{n}, T + 500L_{n}) $ and
$$ d - c \geq L_{n}, \quad \mu - d \geq L_{n}, \quad \nu - \mu \geq L_{n}, \quad c' - \nu \geq L_{n}, \quad d' - c' \geq L_{n} $$
and a $ k \in \mathcal{B}_{K}^{n} $ such that $ (c,d') \perp l $ for every $ l \in \mathcal{B}_{K}^{n} \setminus \{ k \} $ and
$$ \dashint _{\mu }^{\nu } f - \dashint _{c}^{d} f - \dashint _{c'}^{d'} f \geq - \dashint _{S}^{T} f + \frac{1}{10} \Lambda _{k}^{n}. $$
\end{claim}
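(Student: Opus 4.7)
The plan is to mirror the proof of Claim \ref{clAUV} essentially step by step, replacing structure (A) with structure (B). First I apply Lemma \ref{lemmUV} to $(S,T)$ to extract a subinterval $(S',T')$ with $\dashint _{S'}^{T'} f \leq \dashint _{S}^{T} f$ together with an integer $k \equiv K \, \mathrm{mod}\, 200$ for which $(S',T') \perp l$ holds for every $l \neq k$ congruent to $K$ modulo $200$. If $k \notin \mathcal{B}_{K}^{n}$, then $(S',T')$ is automatically $\perp l$ for every $l \in \mathcal{B}_{K}^{n}$, and condition (i) is realized immediately by $(c,d)=(S',T')$.

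The substantive case $k \in \mathcal{B}_{K}^{n}$ will make use of the six-point system $\alpha _{k} < \beta _{k} < u_{k} < v_{k} < \gamma _{k} < \delta _{k}$ supplied by (B), all of whose points lie in the critical interval $((k-50)L_{n},(k+51)L_{n})$. By symmetry I describe only the situation $S' = (k-100)L_{n}$. Setting $W = S' + \tfrac{1}{5}(T'-S')$, a direct estimate shows $W \leq (k-59)L_{n}$, so $W$ sits strictly to the left of $\alpha _{k} \geq (k-50)L_{n}$ with a gap of at least $9L_{n}$. The convex-combination identity $\dashint _{S'}^{T'} f = \tfrac{1}{5}\dashint _{S'}^{W} f + \tfrac{4}{5}\dashint _{W}^{T'} f$ forces one of two alternatives: either $\dashint _{W}^{T'} f \leq \dashint _{S'}^{T'} f - \tfrac{1}{10}\Lambda _{k}^{n}$, in which case $(c,d)=(W,T')$ delivers condition (ii); or $\dashint _{S'}^{W} f \leq \dashint _{S'}^{T'} f + \tfrac{2}{5}\Lambda _{k}^{n}$, in which case I set
$$ (c,d,\mu ,\nu ,c',d') \; = \; (S',\, W,\, \alpha _{k},\, \beta _{k},\, u_{k},\, v_{k}) $$
and condition (iii) follows by combining this bound with $\dashint _{\alpha _{k}}^{\beta _{k}} f - \dashint _{u_{k}}^{v_{k}} f \geq \tfrac{1}{2}\Lambda _{k}^{n}$ from (B). The mirror case $T'=(k+100)L_{n}$ is handled with $W' = T'-\tfrac{1}{5}(T'-S')$ (which then lies to the right of $\delta _{k}$) and the mirror configuration $(u_{k},v_{k},\gamma _{k},\delta _{k},W',T')$.

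The inequality in (iii) is then a one-line computation. The real work, and the place where I expect the only genuine obstacle, is the geometric bookkeeping: checking that each of the five consecutive gaps in the configuration is at least $L_{n}$, that the enclosing interval $(c,d')$ lies inside $(S-500L_{n},T+500L_{n})$, and, most importantly, that $(c,d') \perp l$ for every $l \in \mathcal{B}_{K}^{n} \setminus \{k\}$. The last check reduces to the observation that any such $l$ satisfies $|l-k| \geq 200$, so its critical region $((l-50)L_{n},(l+51)L_{n})$ is separated from $((k-50)L_{n},(k+51)L_{n})$ by at least $99L_{n}$, which comfortably exceeds the $L_{n}$ required by $\perp$; the first two checks are straightforward from the inequalities $\alpha _{k} \geq (k-50)L_{n}$, $\delta _{k} \leq (k+51)L_{n}$ and the explicit formula for $W$ (respectively $W'$).
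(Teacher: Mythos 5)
Your proposal is correct and follows exactly the route the paper intends: the paper's proof of this claim is literally ``the same as Claim \ref{clAUV}'', and your substitution of the (B)-structure for the (A)-structure, with $(c,d,\mu,\nu,c',d')=(S',W,\alpha_{k},\beta_{k},u_{k},v_{k})$ (and its mirror), together with the same $\frac15$-splitting dichotomy, is precisely that adaptation. Only a cosmetic remark: for the $\perp$-check it is cleaner to note $(c,d')\subset((k-105)L_{n},(k+105)L_{n})$, which is still at distance at least $44L_{n}$ from every other critical region with the same residue mod $200$, rather than invoking the $99L_{n}$ separation of the critical regions themselves, since $(c,d')$ is not contained in $((k-50)L_{n},(k+51)L_{n})$.
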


\begin{proof}
This can be proven in the same way as Claim \ref{clAUV}.
\end{proof}

The main difference between proofs of Propositions \ref{propA} and \ref{propB} is that we need one more analogy of Claim \ref{clAUV} because there are intervals $ (\Phi _{I}, \Psi _{I}) $ instead of points $ X_{I} $. Even, two versions of this analogy are provided. Both versions are written at once in the manner that the inequalities belonging to the second version are written in square brackets (this concerns also the proof of the claim).

\begin{claim} \label{clBPhiPsi}
Let $ (\Phi , \Psi ) $ be an interval of length greater than $ 210L_{n} $. Then at least one of the following conditions is fulfilled:

{\rm (i*)} There is an interval $ (\mu , \nu ) \subset (\Phi , \Psi ) $ with $ \nu - \mu \geq L_{n} $ such that $ (\mu , \nu ) \perp l $ for every $ l \in \mathcal{B}_{K}^{n} $ and
$$ \dashint _{\mu }^{\nu } f \geq \dashint _{\Phi }^{\Psi } f. $$

{\rm (ii*)} There are an interval $ (\mu , \nu ) \subset (\Phi , \Psi ) $ with $ \nu - \mu \geq L_{n} $ and a $ k \in \mathcal{B}_{K}^{n} $ such that $ (\mu , \nu ) \perp l $ for every $ l \in \mathcal{B}_{K}^{n} \setminus \{ k \} $ and
$$ \dashint _{\mu }^{\nu } f \geq \dashint _{\Phi }^{\Psi } f + \frac{1}{10} \Lambda _{k}^{n} \quad \Big[ \; \textrm{resp. } \dashint _{\mu }^{\nu } f \geq \dashint _{\Phi }^{\Psi } f + \frac{1}{5} \Lambda _{k}^{n} \; \Big] . $$

{\rm (iii*)} There are a system
$$ \mu < \nu < c < d < \mu ' < \nu ' $$
with $ (\mu , \nu ') \subset (\Phi - 500L_{n}, \Psi + 500L_{n}) $ and
$$ \nu - \mu \geq L_{n}, \quad c - \nu \geq L_{n}, \quad d - c \geq L_{n}, \quad \mu ' - d \geq L_{n}, \quad \nu ' - \mu ' \geq L_{n} $$
and a $ k \in \mathcal{B}_{K}^{n} $ such that $ (\mu , \nu ') \perp l $ for every $ l \in \mathcal{B}_{K}^{n} \setminus \{ k \} $ and
$$ \dashint _{\mu }^{\nu } f - \dashint _{c}^{d} f + \dashint _{\mu '}^{\nu '} f \geq \dashint _{\Phi }^{\Psi } f + \frac{1}{10} \Lambda _{k}^{n} $$
$$ \Big[ \; \textrm{resp. } \dashint _{\mu }^{\nu } f - 2\dashint _{c}^{d} f + 2\dashint _{\mu '}^{\nu '} f \geq \dashint _{\Phi }^{\Psi } f + \frac{1}{5} \Lambda _{k}^{n} $$
$$ \hspace{0.6cm} \textrm{and } \, 2\dashint _{\mu }^{\nu } f - 2\dashint _{c}^{d} f + \dashint _{\mu '}^{\nu '} f \geq \dashint _{\Phi }^{\Psi } f + \frac{1}{5} \Lambda _{k}^{n} \; \Big] . $$
\end{claim}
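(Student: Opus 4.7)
The plan is to run the same argument as Claim \ref{clAUV} but with ``low'' and ``high'' averages swapped throughout, thereby invoking the \emph{moreover} clause of Lemma \ref{lemmUV}. First, apply that lemma to $(\Phi,\Psi)$ to produce an integer $k\equiv K\pmod{200}$ and a subinterval $(U',V')\subset(\Phi,\Psi)$ of length at least $5L_n$, with $\dashint_{U'}^{V'} f\geq\dashint_{\Phi}^{\Psi} f$, anchored at $U'=(k-100)L_n$ or $V'=(k+100)L_n$, contained in $[(k-105)L_n,(k+105)L_n]$, and $\perp$-separated from every other congruence-$K$ band. If $k\notin\mathcal{B}_K^n$ then $(\mu,\nu)=(U',V')$ is already case (i*), so assume $k\in\mathcal{B}_K^n$ and, by left/right symmetry, $U'=(k-100)L_n$.

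Next, feed in the six-point structure $\alpha_k<\beta_k<u_k<v_k<\gamma_k<\delta_k$ inside $[(k-50)L_n,(k+51)L_n]$ granted by (B), which provides $\min\{\dashint_{\alpha_k}^{\beta_k} f,\dashint_{\gamma_k}^{\delta_k} f\}-\dashint_{u_k}^{v_k} f\geq\tfrac12\Lambda_k^n$. Split $(U',V')$ at $W=U'+\tfrac15(V'-U')\leq(k-59)L_n$; since $\dashint_{U'}^{V'} f$ is the convex combination $\tfrac15\dashint_{U'}^W f+\tfrac45\dashint_W^{V'} f$, at least one of the alternatives $\dashint_W^{V'} f\geq\dashint_{U'}^{V'} f+\tfrac{1}{10}\Lambda_k^n$ and $\dashint_{U'}^W f\geq\dashint_{U'}^{V'} f-\tfrac25\Lambda_k^n$ must occur. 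The first directly furnishes case (ii*) with $(\mu,\nu)=(W,V')$. The second, combined with the (B)-data via $(\mu,\nu)=(U',W)$, $(c,d)=(u_k,v_k)$, $(\mu',\nu')=(\gamma_k,\delta_k)$, gives
\[
\dashint_{\mu}^{\nu} f-\dashint_{c}^{d} f+\dashint_{\mu'}^{\nu'} f\geq\bigl(\dashint_{U'}^{V'} f-\tfrac25\Lambda_k^n\bigr)+\tfrac12\Lambda_k^n\geq\dashint_{\Phi}^{\Psi} f+\tfrac{1}{10}\Lambda_k^n,
\]
which is case (iii*). All gap requirements $\nu-\mu,\,c-\nu,\,d-c,\,\mu'-d,\,\nu'-\mu'\geq L_n$ are immediate from the positions $W\leq(k-59)L_n<(k-48)L_n\leq u_k$ and the defining inequalities of the (B)-structure, and the containment $(\mu,\nu')\subset(\Phi-500L_n,\Psi+500L_n)$ is a comfortable consequence of $U'\geq\Phi$ together with $\delta_k\leq(k+51)L_n\leq V'+146L_n\leq\Psi+146L_n$.

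For the bracketed variant the same scheme is run with sharper thresholds: either $\dashint_W^{V'} f\geq\dashint_{U'}^{V'} f+\tfrac15\Lambda_k^n$ (giving (ii*) bracketed), or $\dashint_{U'}^W f\geq\dashint_{U'}^{V'} f-\tfrac45\Lambda_k^n$, and in the latter case one must leverage \emph{both} high averages $\dashint_{\alpha_k}^{\beta_k} f$ and $\dashint_{\gamma_k}^{\delta_k} f$. The first bracketed (iii*) inequality falls out of the same placement as above, since
\[
\dashint_{\mu}^{\nu} f-2\dashint_{c}^{d} f+2\dashint_{\mu'}^{\nu'} f\geq\bigl(\dashint_{U'}^{V'} f-\tfrac45\Lambda_k^n\bigr)+2\cdot\tfrac12\Lambda_k^n\geq\dashint_{\Phi}^{\Psi} f+\tfrac15\Lambda_k^n.
\]
I expect the main obstacle to be the \emph{second} bracketed (iii*) inequality $2\dashint_{\mu}^{\nu} f-2\dashint_{c}^{d} f+\dashint_{\mu'}^{\nu'} f\geq\dashint_{\Phi}^{\Psi} f+\tfrac15\Lambda_k^n$, where the single high window on the right can no longer cover the deficit on its own. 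The likely remedy is a subsidiary case split according to the size of $\dashint_{u_k}^{v_k} f$, with the alternative branch swapping $(\mu,\nu)=(U',W)$ for $(\mu,\nu)=(\alpha_k,\beta_k)$ so that the \emph{two} high windows $(\alpha_k,\beta_k)$ and $(\gamma_k,\delta_k)$ jointly carry the $\tfrac15\Lambda_k^n$ deficit; the remaining verifications of gaps and containment are identical bookkeeping to Claim \ref{clAUV}.
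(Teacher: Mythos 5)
Your overall route is the paper's route: apply the ``moreover'' clause of Lemma \ref{lemmUV} to $(\Phi,\Psi)$, dispose of $k\notin\mathcal{B}_K^n$ via (i*), split at $W=U'+\tfrac15(V'-U')$ (the paper's $\Theta$), use the averaging dichotomy to get (ii*) or a lower bound on $\dashint_{U'}^{W} f$, and in the latter case feed in the (B)-structure with $(c,d)=(u_k,v_k)$, $(\mu',\nu')=(\gamma_k,\delta_k)$. The unbracketed statement and the bracketed (ii*) dichotomy are handled correctly, and the bookkeeping (gaps, containment, $\perp$) is indeed the same as in Claim \ref{clAUV}.

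The gap is exactly where you flagged it, and it is not closed: the second bracketed inequality in (iii*) is the one point where this claim is genuinely harder than Claim \ref{clAUV}, and you leave it at ``the likely remedy is\ldots''. Note that once you swap to $(\mu,\nu)=(\alpha_k,\beta_k)$ you must also re-verify the \emph{first} bracketed inequality, since the bound $\dashint_{\mu}^{\nu} f\geq\dashint_{\Phi}^{\Psi} f-\tfrac45\Lambda_k^n$ is no longer automatic; so the branch criterion has to be chosen with care, and a split literally on ``the size of $\dashint_{u_k}^{v_k} f$'' works only with a correctly tuned threshold (e.g.\ comparing $\dashint_{u_k}^{v_k} f$ with $\dashint_{\Phi}^{\Psi} f-\tfrac{13}{10}\Lambda_k^n$ does the job, but this needs both inequalities checked in both branches). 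The paper's device is cleaner: take $(\mu,\nu)$ to be whichever of $(\Phi',\Theta)$ and $(\alpha_k,\beta_k)$ has the larger average. Then simultaneously
$$ \dashint_{\mu}^{\nu} f \;\geq\; \dashint_{\Phi'}^{\Theta} f \;\geq\; \dashint_{\Phi}^{\Psi} f-\tfrac{8}{5}\cdot\tfrac12\Lambda_k^n
\qquad\textrm{and}\qquad
\dashint_{\mu}^{\nu} f-\dashint_{u_k}^{v_k} f \;\geq\; \tfrac12\Lambda_k^n , $$
and both bracketed inequalities follow at once by writing, e.g.,
$$ 2\dashint_{\mu}^{\nu} f-2\dashint_{c}^{d} f+\dashint_{\mu'}^{\nu'} f
=\dashint_{\mu}^{\nu} f+\Big(\dashint_{\mu}^{\nu} f-\dashint_{c}^{d} f\Big)+\Big(\dashint_{\mu'}^{\nu'} f-\dashint_{c}^{d} f\Big)
\geq \dashint_{\Phi}^{\Psi} f-\tfrac45\Lambda_k^n+\tfrac12\Lambda_k^n+\tfrac12\Lambda_k^n . $$
So your proposal is structurally right but incomplete as written; supplying this maximum-of-two-windows choice (or a fully verified version of your threshold split) is what is missing.
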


\begin{proof}
By Lemma \ref{lemmUV}, there are a subinterval $ (\Phi ', \Psi ') $ and a $ k $ with $ k = K \, \mathrm{mod} \, 200 $ such that
\begin{itemize}
\item $ \dashint _{\Phi '}^{\Psi '} f \geq \dashint _{\Phi }^{\Psi } f $,
\item $ \Psi ' - \Phi ' \geq 5L_{n} $,
\item $ \Phi ' = (k-100)L_{n} $ or $ \Psi ' = (k+100)L_{n} $,
\item $ (k-105)L_{n} \leq \Phi ' $ and $ \Psi ' \leq (k+105)L_{n} $,
\item $ (\Phi ', \Psi ') \perp l $ for every $ l \neq k $ with $ l = K \, \mathrm{mod} \, 200 $.
\end{itemize}
If $ k \notin \mathcal{B}_{K}^{n} $, then (i*) is fulfilled for $ (\mu , \nu ) = (\Phi ', \Psi ') $. So, let us assume that $ k \in \mathcal{B}_{K}^{n} $ (and thus that we have $ \alpha _{k} < \beta _{k} < u_{k} < v_{k} < \gamma _{k} < \delta _{k} $ for this $ k $).

We provide the proof under the assumption $ \Phi ' = (k-100)L_{n} $ only (the procedure is similar when $ \Psi ' = (k+100)L_{n} $). We put
$$ \Theta = \Phi ' + \frac{1}{5}(\Psi '-\Phi '). $$
We have $ \Theta = \frac{4}{5}\Phi ' + \frac{1}{5}\Psi ' \leq \frac{4}{5}(k-100)L_{n} + \frac{1}{5}(k+105)L_{n} = (k-59)L_{n} \leq \alpha _{k} - 9L_{n} \leq u_{k} - 9L_{n} $ and $ \delta _{k} \leq (k + 51)L_{n} = \Phi ' + 151L_{n} $. In particular,
$$ u_{k} - \Theta \geq L_{n} \quad \textrm{and} \quad \delta _{k} \leq \Psi ' + 500L_{n}. $$
Further, we have
$$ \dashint _{\Phi '}^{\Theta } f \geq \dashint _{\Phi '}^{\Psi '} f - \frac{4}{5} \cdot \frac{1}{2} \Lambda _{k}^{n} \quad \textrm{or} \quad \dashint _{\Theta }^{\Psi '} f \geq \dashint _{\Phi '}^{\Psi '} f + \frac{1}{5} \cdot \frac{1}{2} \Lambda _{k}^{n} $$
$$ \Big[ \; \textrm{resp.} \quad \dashint _{\Phi '}^{\Theta } f \geq \dashint _{\Phi '}^{\Psi '} f - \frac{8}{5} \cdot \frac{1}{2} \Lambda _{k}^{n} \quad \textrm{or} \quad \dashint _{\Theta }^{\Psi '} f \geq \dashint _{\Phi '}^{\Psi '} f + \frac{2}{5} \cdot \frac{1}{2} \Lambda _{k}^{n} \; \Big] . $$
If the second inequality takes place, then (ii*) is fulfilled for $ (\mu , \nu ) = (\Theta , \Psi ') $. If the first inequality takes place, then (iii*) is fulfilled for
$$ (\mu , \nu ) = \left\{\begin{array}{ll} (\Phi ',\Theta ), & \dashint _{\Phi '}^{\Theta } f \geq \dashint _{\alpha _{k}}^{\beta _{k}} f, \\
(\alpha _{k}, \beta _{k}), & \dashint _{\Phi '}^{\Theta } f < \dashint _{\alpha _{k}}^{\beta _{k}} f, \\
\end{array} \right.
\quad (c,d) = (u_{k},v_{k}), \quad (\mu ', \nu ') = (\gamma _{k}, \delta _{k}). $$
The inequalities in (iii*) follow from
$$ \dashint _{\mu }^{\nu } f \geq \dashint _{\Phi '}^{\Theta } f \geq \dashint _{\Phi }^{\Psi } f - \frac{4}{5} \cdot \frac{1}{2} \Lambda _{k}^{n} \quad \Big[ \; \textrm{resp. } \dots - \frac{8}{5} \cdot \frac{1}{2} \Lambda _{k}^{n} \; \Big] , $$
$$ \dashint _{\mu }^{\nu } f - \dashint _{c}^{d} f \geq \frac{1}{2} \Lambda _{k}^{n}, \quad \dashint _{\mu '}^{\nu '} f - \dashint _{c}^{d} f \geq \frac{1}{2} \Lambda _{k}^{n}. $$
\end{proof}

\begin{claim} \label{clBT}
There is a subset $ \mathcal{T} \subset \mathcal{B}_{K}^{n} $ for which there exists a system
$$ \mu _{1} < \nu _{1} < c_{1} < d_{1} < \mu _{2} < \nu _{2} < c_{2} < d_{2} < \dots < c_{j} < d_{j} < \mu _{j+1} < \nu _{j+1} $$
such that
$$ \nu _{1} - \mu _{1} \geq L_{n}, \quad c_{1} - \nu _{1} \geq L_{n}, \quad d_{1} - c_{1} \geq L_{n}, \quad \mu _{2} - d_{1} \geq L_{n}, \quad \dots , $$
\begin{eqnarray*}
l \in \mathcal{B}_{K}^{n} \setminus \mathcal{T} & \Rightarrow & (c_{i}, d_{i}) \perp l, \; 1 \leq i \leq j, \\
l \in \mathcal{B}_{K}^{n} \setminus \mathcal{T} & \Rightarrow & (\mu _{i}, \nu _{i}) \perp l, \; 1 \leq i \leq j+1,
\end{eqnarray*}
and
$$ \sum _{i=1}^{j} \Big[ \dashint _{\mu _{i}}^{\nu _{i}} f + \dashint _{\mu _{i+1}}^{\nu _{i+1}} f - 2\dashint _{c_{i}}^{d_{i}} f \Big] \geq \sum _{I=1}^{M} \Big[ \dashint _{\Phi _{I}}^{\Psi _{I}} f + \dashint _{\Phi _{I+1}}^{\Psi _{I+1}} f - 2\dashint _{S_{I}}^{T_{I}} f \Big] + \frac{1}{5} \sum _{k \in \mathcal{T}} \Lambda _{k}^{n}. $$
\end{claim}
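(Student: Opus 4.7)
The plan is to mirror Claim~\ref{clAS} closely, except that both the intervals $(S_I,T_I)$ and the intervals $(\Phi_I,\Psi_I)$ must now be shortened to intervals satisfying the $\perp$ condition. I would apply Claim~\ref{clBST} to each of the $M$ intervals $(S_I,T_I)$ and Claim~\ref{clBPhiPsi} to each of the $M+1$ intervals $(\Phi_I,\Psi_I)$, and then splice the resulting subsystems into a single system in the obvious left-to-right order. The set $\mathcal{T}$ is defined as the collection of all integers $k$ arising in an application of conclusion (ii), (iii), (ii*), or (iii*); the built-in perpendicularity clauses of the two claims then immediately give $(c_i,d_i)\perp l$ and $(\mu_i,\nu_i)\perp l$ for every $l \in \mathcal{B}_K^n \setminus \mathcal{T}$.

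The central bookkeeping rests on the coefficient structure of the target sum. Rewriting $\sum_{i=1}^{j}\bigl[\dashint_{\mu_i}^{\nu_i} f + \dashint_{\mu_{i+1}}^{\nu_{i+1}} f - 2\dashint_{c_i}^{d_i} f\bigr]$ shows that $(\mu_1,\nu_1)$ and $(\mu_{j+1},\nu_{j+1})$ enter with weight $1$, all interior $(\mu_i,\nu_i)$ with weight $2$, and every $(c_i,d_i)$ with weight $-2$; the analogous decomposition of the target right-hand side puts the two boundary terms $(\Phi_1,\Psi_1)$ and $(\Phi_{M+1},\Psi_{M+1})$ at weight $1$ and the remaining $(\Phi_I,\Psi_I)$ at weight $2$. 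This dictates which version of Claim~\ref{clBPhiPsi} I invoke: for interior $(\Phi_I,\Psi_I)$ the base inequalities, doubled, deliver the required gain of $\tfrac{1}{5}\Lambda_k^n$; for a boundary $(\Phi_I,\Psi_I)$ the bracketed inequalities are invoked and already carry the gain $\tfrac{1}{5}\Lambda_k^n$ without doubling. When (iii*) fires at the left boundary ($I=1$), the left endpoint of the new system is occupied by $(\mu,\nu)$ at weight $1$ and the fresh interior $(\mu',\nu')$ at weight $2$, so the first bracketed inequality $\dashint_\mu^\nu f - 2\dashint_c^d f + 2\dashint_{\mu'}^{\nu'} f \geq \dashint_\Phi^\Psi f + \tfrac{1}{5}\Lambda_k^n$ is exactly the one that matches; at the right boundary the weight pattern flips and the second bracketed inequality matches. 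For each $(S_I,T_I)$, doubling the Claim~\ref{clBST} inequality gives gain $\tfrac{1}{5}\Lambda_k^n$; in case (iii) the inserted $(\mu,\nu)$ sits between two fresh $(c,d)$ pieces and is therefore interior with weight $2$, so the three pieces $-2\dashint_c^d f + 2\dashint_\mu^\nu f - 2\dashint_{c'}^{d'} f$ replace the single $-2\dashint_{S_I}^{T_I} f$ with the coefficients lining up correctly.

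The geometric conditions are straightforward: the spacing of $\geq L_n$ between consecutive points inside any single claim output is explicit in the claim statements, and the $\geq 1024 L_n$ spacing between consecutive source intervals comfortably absorbs the $500 L_n$ enlargement allowed by the two claims' containment conditions, so consecutive outputs from neighbouring source intervals remain separated by at least $L_n$. I expect the main obstacle to be precisely the asymmetric boundary bookkeeping: unlike in Claim~\ref{clAS}, where $X_1$ and $X_{M+1}$ were bare points and no left/right distinction was needed, here the two boundary $(\Phi_I,\Psi_I)$ play different roles under (iii*), and the asymmetric pair of bracketed inequalities of Claim~\ref{clBPhiPsi} is tailored to exactly these two cases; one must carefully match the correct inequality to the correct boundary and verify that the splicing retains the required alternation $(\mu_i,\nu_i),(c_i,d_i),(\mu_{i+1},\nu_{i+1}),\dots$ Once this pairing is installed, summing all the individual inequalities yields the claimed lower bound $\sum_{I=1}^M [\,\cdots\,] + \tfrac{1}{5}\sum_{k \in \mathcal{T}} \Lambda_k^n$.
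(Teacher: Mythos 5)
Your proposal is correct and takes essentially the same route as the paper's proof: apply Claim~\ref{clBST} to the intervals $(S_I,T_I)$ and Claim~\ref{clBPhiPsi} to the $(\Phi_I,\Psi_I)$ (first version doubled for interior $I$, the two bracketed inequalities matched asymmetrically to $I=1$ and $I=M+1$), define $\mathcal{T}$ as the set of $k$'s arising in (ii), (iii), (ii*), (iii*), and splice the outputs in order, with the $1024L_n$ spacing absorbing the $500L_n$ enlargements. The only detail the paper adds is the trivial case $M=0$, settled by taking $\mathcal{T}=\varnothing$, $j=0$ and any interval $(\mu_1,\nu_1)$ of length $L_n$, which your splicing recipe (as described) would not cover verbatim.
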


We note that, if $ j \geq 1 $ and $ M \geq 1 $, then the inequality can be written in the form
$$
\begin{aligned}
 & \dashint _{\mu _{1}}^{\nu _{1}} f - 2\dashint _{c_{1}}^{d_{1}} f + 2\dashint _{\mu _{2}}^{\nu _{2}} f - 2\dashint _{c_{2}}^{d_{2}} f + \dots + 2\dashint _{\mu _{j}}^{\nu _{j}} f - 2\dashint _{c_{j}}^{d_{j}} f + \dashint _{\mu _{j+1}}^{\nu _{j+1}} f \\
 & \geq \dashint _{\Phi _{1}}^{\Psi _{1}} f - 2\dashint _{S_{1}}^{T_{1}} f + 2\dashint _{\Phi _{2}}^{\Psi _{2}} f - \dots - 2\dashint _{S_{M}}^{T_{M}} f + \dashint _{\Phi _{M+1}}^{\Psi _{M+1}} f + \frac{1}{5} \sum _{k \in \mathcal{T}} \Lambda _{k}^{n}.
\end{aligned}
$$

\begin{proof}
If $ M = 0 $, then we can put $ \mathcal{T} = \varnothing , j = 0 $ and find a suitable interval $ (\mu _{1}, \nu _{1}) $ of length $ L_{n} $. So, let us assume that $ M \geq 1 $.

We apply Claim \ref{clBST} on the intervals $ (S_{I}, T_{I}), 1 \leq I \leq M $, and Claim \ref{clBPhiPsi} on the intervals $ (\Phi _{I}, \Psi _{I}), 1 \leq I \leq M + 1, $ (the first version for $ 1 < I < M + 1 $, the second version for $ I = 1, I = M + 1 $). We write the inequalities from Claim \ref{clBST} in a form more familiar for our purposes:
\begin{eqnarray*}
 & \textrm{(i)} & - 2\dashint _{c}^{d} f \geq - 2\dashint _{S_{I}}^{T_{I}} f, \\
 & \textrm{(ii)} & - 2\dashint _{c}^{d} f \geq - 2\dashint _{S_{I}}^{T_{I}} f + \frac{1}{5} \Lambda _{k}^{n}, \\
 & \textrm{(iii)} & - 2\dashint _{c}^{d} f + 2\dashint _{\mu }^{\nu } f - 2\dashint _{c'}^{d'} f \geq - 2\dashint _{S_{I}}^{T_{I}} f + \frac{1}{5} \Lambda _{k}^{n}.
\end{eqnarray*}
Concerning the inequalities from Claim \ref{clBPhiPsi}, we moreover specify which inequality will be applied for $ I $:
\begin{flalign*}
 & \textrm{(i*)} & & 1 < I < M + 1: & & 2\dashint _{\mu }^{\nu } f \geq 2\dashint _{\Phi _{I}}^{\Psi _{I}} f, & \\
 & & & I = 1 \textrm{ or } I = M + 1: & & \dashint _{\mu }^{\nu } f \geq \dashint _{\Phi _{I}}^{\Psi _{I}} f, & \\
 & \textrm{(ii*)} & & 1 < I < M + 1: & & 2\dashint _{\mu }^{\nu } f \geq 2\dashint _{\Phi _{I}}^{\Psi _{I}} f + \frac{1}{5} \Lambda _{k}^{n}, & \\
 & & & I = 1 \textrm{ or } I = M + 1: & & \dashint _{\mu }^{\nu } f \geq \dashint _{\Phi _{I}}^{\Psi _{I}} f + \frac{1}{5} \Lambda _{k}^{n}, & \\
 & \textrm{(iii*)} & & 1 < I < M + 1: & & 2\dashint _{\mu }^{\nu } f - 2\dashint _{c}^{d} f + 2\dashint _{\mu '}^{\nu '} f \geq 2\dashint _{\Phi _{I}}^{\Psi _{I}} f + \frac{1}{5} \Lambda _{k}^{n}, & \\
 & & & I = 1: & & \dashint _{\mu }^{\nu } f - 2\dashint _{c}^{d} f + 2\dashint _{\mu '}^{\nu '} f \geq \dashint _{\Phi _{I}}^{\Psi _{I}} f + \frac{1}{5} \Lambda _{k}^{n}, & \\
 & & & I = M + 1: & & 2\dashint _{\mu }^{\nu } f - 2\dashint _{c}^{d} f + \dashint _{\mu '}^{\nu '} f \geq \dashint _{\Phi _{I}}^{\Psi _{I}} f + \frac{1}{5} \Lambda _{k}^{n}. &
\end{flalign*}
We define $ \mathcal{T} $ as the set of those $ k $'s which appeared in (ii), (iii), (ii*) or (iii*) for some $ I $. One can construct the desired system by collecting the systems which we obtained from Claims \ref{clBST} and \ref{clBPhiPsi}.
\end{proof}

To finish the proof of Proposition \ref{propB}, it remains to show that, if a proper subset $ \mathcal{T} \subset \mathcal{B}_{K}^{n} $ has such a system as in Claim \ref{clBT}, then $ \mathcal{T} \cup \{ k \} $ where $ k \in \mathcal{B}_{K}^{n} \setminus \mathcal{T} $ has also such a system.

So, let $ \mathcal{T} $ and
$$ \mu _{1} < \nu _{1} < c_{1} < d_{1} < \mu _{2} < \nu _{2} < c_{2} < d_{2} < \dots < c_{j} < d_{j} < \mu _{j+1} < \nu _{j+1} $$
be as in Claim \ref{clBT} and let $ k \in \mathcal{B}_{K}^{n} \setminus \mathcal{T} $. Let $ \iota $ be the index such that $ (\mu _{\iota }, \nu _{\iota }) $ is covered by the same connected component of $ \mathbbm{R} \setminus \bigcup _{i=1}^{j} [c_{i},d_{i}] $ as $ ((k - 50)L_{n}, (k + 51)L_{n}) $. We intend to obtain the desired system for $ \mathcal{T} \cup \{ k \} $ by replacing $ \mu _{\iota } < \nu _{\iota } $ with
$$ \mu < \nu < u_{k} < v_{k} < \mu ' < \nu ' $$
where
$$ (\mu , \nu ) = \left\{\begin{array}{ll} (\mu _{\iota }, \nu _{\iota }), & \quad \nu _{\iota } \leq u_{k} - L_{n} \textrm{ and } \dashint _{\mu _{\iota }}^{\nu _{\iota }} f \geq \dashint _{\alpha _{k}}^{\beta _{k}} f, \\
(\alpha _{k}, \beta _{k}), & \quad \textrm{otherwise}, \\
\end{array} \right. $$
$$ (\mu ', \nu ') = \left\{\begin{array}{ll} (\mu _{\iota }, \nu _{\iota }), & \quad \mu _{\iota } \geq v_{k} + L_{n} \textrm{ and } \dashint _{\mu _{\iota }}^{\nu _{\iota }} f \geq \dashint _{\gamma _{k}}^{\delta _{k}} f, \\
(\gamma _{k}, \delta _{k}), & \quad \textrm{otherwise}. \\
\end{array} \right. $$
For every $ l \neq k $ with $ l = K \, \mathrm{mod} \, 200 $, we have
$$ \mathrm{dist} \, \big( ((k - 50)L_{n}, (k + 51)L_{n}), ((l - 50)L_{n}, (l + 51)L_{n}) \big) \geq 99L_{n} \geq L_{n}, $$
and thus
\begin{eqnarray*}
l \in \mathcal{B}_{K}^{n} \setminus (\mathcal{T} \cup \{ k \} ) & \Rightarrow & (u_{k}, v_{k}) \perp l, \\
l \in \mathcal{B}_{K}^{n} \setminus (\mathcal{T} \cup \{ k \} ) & \Rightarrow & (\mu, \nu) \perp l \textrm{ and } (\mu ', \nu ') \perp l.
\end{eqnarray*}

Let us prove the inequality for the modified system. We need to show that the modification of the system increased the left side at least by $ \frac{1}{5} \Lambda _{k}^{n} $. What we need to show is
\begin{flalign*}
 & \textrm{when $ 1 < \iota < j + 1 $:} & 2\dashint _{\mu }^{\nu } f - 2\dashint _{u_{k}}^{v_{k}} f + 2\dashint _{\mu '}^{\nu '} f & \geq 2\dashint _{\mu _{\iota }}^{\nu _{\iota }} f + \frac{1}{5} \Lambda _{k}^{n}, & \\
 & \textrm{when $ 1 = \iota < j + 1 $:} & \dashint _{\mu }^{\nu } f - 2\dashint _{u_{k}}^{v_{k}} f + 2\dashint _{\mu '}^{\nu '} f & \geq \dashint _{\mu _{\iota }}^{\nu _{\iota }} f + \frac{1}{5} \Lambda _{k}^{n}, & \\
 & \textrm{when $ 1 < \iota = j + 1 $:} & 2\dashint _{\mu }^{\nu } f - 2\dashint _{u_{k}}^{v_{k}} f + \dashint _{\mu '}^{\nu '} f & \geq \dashint _{\mu _{\iota }}^{\nu _{\iota }} f + \frac{1}{5} \Lambda _{k}^{n}, & \\
 & \textrm{when $ 1 = \iota = j + 1 $:} & \dashint _{\mu }^{\nu } f - 2\dashint _{u_{k}}^{v_{k}} f + \dashint _{\mu '}^{\nu '} f & \geq \frac{1}{5} \Lambda _{k}^{n}. &
\end{flalign*}
These inequalities, even with $ 1 $ instead of $ \frac{1}{5} $, follow from
$$ \quad \dashint _{\mu }^{\nu } f - \dashint _{u_{k}}^{v_{k}} f \geq \frac{1}{2} \Lambda _{k}^{n}, \quad \dashint _{\mu '}^{\nu '} f - \dashint _{u_{k}}^{v_{k}} f \geq \frac{1}{2} \Lambda _{k}^{n}, $$
$$ \dashint _{\mu }^{\nu } f \geq \dashint _{\mu _{\iota }}^{\nu _{\iota }} f \quad \textrm{or} \quad \dashint _{\mu '}^{\nu '} f \geq \dashint _{\mu _{\iota }}^{\nu _{\iota }} f $$
($ \dashint _{\mu }^{\nu } f \geq \dashint _{\mu _{\iota }}^{\nu _{\iota }} f $ is implied by $ \nu _{\iota } \leq u_{k} - L_{n} $ and $ \dashint _{\mu '}^{\nu '} f \geq \dashint _{\mu _{\iota }}^{\nu _{\iota }} f $ is implied by $ \mu _{\iota } \geq v_{k} + L_{n} $).

The proof of Proposition \ref{propB} is completed.

\begin{corollary} \label{corB}
For $ 0 \leq N \leq 9 $ and $ 0 \leq K \leq 199 $, we have
$$ \sum \Big\{ \Lambda _{k}^{n} : n = N \, \mathrm{mod} \, 10, k \in \mathcal{B}_{K}^{n} \Big\} \leq 5 \Var f. $$
\end{corollary}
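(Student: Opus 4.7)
The plan is to mirror the proof of Corollary \ref{corA}, substituting Proposition \ref{propB} for Proposition \ref{propA}. First I would fix $ N $ and $ K $ and choose $ \eta \in \mathbbm{N} \cup \{ 0 \} $ large enough that $ \mathcal{B}_{K}^{o} = \varnothing $ for every $ o = N \, \mathrm{mod} \, 10 $ with $ o > n $, where $ n = 10\eta + N $; this is possible because only finitely many $ \Lambda _{k}^{o} $ are positive. Apply Proposition \ref{propB} to produce a system
$$ \varphi _{1} < \psi _{1} < s_{1} < t_{1} < \dots < s_{m} < t_{m} < \varphi _{m+1} < \psi _{m+1} $$
satisfying the stated spacing conditions and the lower bound
$$ \sum _{i=1}^{m} \Big[ \dashint _{\varphi _{i}}^{\psi _{i}} f + \dashint _{\varphi _{i+1}}^{\psi _{i+1}} f - 2\dashint _{s_{i}}^{t_{i}} f \Big] \geq \frac{1}{5} \sum \Big\{ \Lambda _{k}^{o} : o = N \, \mathrm{mod} \, 10,\ o \leq n,\ k \in \mathcal{B}_{K}^{o} \Big\} . $$

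The key step is to replace each average by a pointwise value of $ f $. For $ 1 \leq i \leq m+1 $ I would pick $ a_{i} \in (\varphi _{i}, \psi _{i}) $ with $ f(a_{i}) \geq \dashint _{\varphi _{i}}^{\psi _{i}} f $, and for $ 1 \leq i \leq m $ I would pick $ b_{i} \in (s_{i}, t_{i}) $ with $ f(b_{i}) \leq \dashint _{s_{i}}^{t_{i}} f $. The spacing in the system forces the ordering $ a_{1} < b_{1} < a_{2} < b_{2} < \dots < b_{m} < a_{m+1} $, so the variation along this finite sample yields
$$ \Var f \geq \sum _{i=1}^{m} \Big[ |f(a_{i}) - f(b_{i})| + |f(a_{i+1}) - f(b_{i})| \Big] \geq \sum _{i=1}^{m} \Big[ f(a_{i}) + f(a_{i+1}) - 2 f(b_{i}) \Big] . $$
Substituting $ f(a_{i}) \geq \dashint _{\varphi _{i}}^{\psi _{i}} f $ and $ -f(b_{i}) \geq -\dashint _{s_{i}}^{t_{i}} f $ and combining with the bound from Proposition \ref{propB} gives
$$ \Var f \geq \frac{1}{5} \sum \Big\{ \Lambda _{k}^{o} : o = N \, \mathrm{mod} \, 10,\ o \leq n,\ k \in \mathcal{B}_{K}^{o} \Big\} = \frac{1}{5} \sum \Big\{ \Lambda _{k}^{n} : n = N \, \mathrm{mod} \, 10,\ k \in \mathcal{B}_{K}^{n} \Big\} , $$
the last equality being the choice of $ \eta $.

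There is essentially no obstacle beyond packaging, since the substantive work of the section has already been carried out inside Proposition \ref{propB}; extracting pointwise samples from averages is routine by comparison. The only thing worth double-checking is the degenerate case $ m = 0 $, in which the Proposition \ref{propB} inequality is an empty sum and the conclusion is trivial. The argument is structurally identical to the proof of Corollary \ref{corA}, with the sole difference that here one must also produce \emph{upper} samples $ a_{i} $ from the averages over $ (\varphi _{i}, \psi _{i}) $, whereas in the $ \mathcal{A} $ case the analogous upper values $ x_{i} $ were already available as pointwise evaluations in the system supplied by Proposition \ref{propA}.
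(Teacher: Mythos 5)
Your proposal is correct and matches the paper's own proof of Corollary \ref{corB}: choose $\eta$ large, apply Proposition \ref{propB}, sample points above the averages on the intervals $(\varphi_i,\psi_i)$ and below the averages on $(s_i,t_i)$, and compare the resulting alternating sum with $\Var f$. The only differences are notational (the paper calls your $a_i$, $b_i$ by $\theta_i$, $z_i$).
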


\begin{proof}
Let $ \eta $ be large enough such that
$$ \mathcal{B}_{K}^{o} \neq \varnothing \quad \Rightarrow \quad o \leq n $$
where $ n = 10\eta + N $. Let
$$ \varphi _{1} < \psi _{1} < s_{1} < t_{1} < \varphi _{2} < \psi _{2} < s_{2} < t_{2} < \dots < s_{m} < t_{m} < \varphi _{m+1} < \psi _{m+1} $$
be the system which Proposition \ref{propB} gives for $ N, K $ and $ \eta $. For $ 1 \leq i \leq m + 1 $, let $ \theta _{i} \in (\varphi _{i}, \psi _{i}) $ be chosen so that
$$ f(\theta _{i}) \geq \dashint _{\varphi _{i}}^{\psi _{i}} f. $$
For $ 1 \leq i \leq m $, let $ z_{i} \in (s_{i},t_{i}) $ be chosen so that
$$ f(z_{i}) \leq \dashint _{s_{i}}^{t_{i}} f. $$
We compute
\begin{eqnarray*}
\Var f & \geq & \sum _{i=1}^{m} \Big[ |f(z_{i}) - f(\theta _{i})| + |f(\theta _{i+1}) - f(z_{i})| \Big] \\ 
 & \geq & \sum _{i=1}^{m} \Big[ f(\theta _{i}) - f(z_{i}) + f(\theta _{i+1}) - f(z_{i}) \Big] \\
 & \geq & \sum _{i=1}^{m} \Big[ \dashint _{\varphi _{i}}^{\psi _{i}} f + \dashint _{\varphi _{i+1}}^{\psi _{i+1}} f - 2\dashint _{s_{i}}^{t_{i}} f \Big] \\
 & \geq & \frac{1}{5} \sum \Big\{ \Lambda _{k}^{o} : o = N \, \mathrm{mod} \, 10, o \leq n, k \in \mathcal{B}_{K}^{o} \Big\} \\
 & = & \frac{1}{5} \sum \Big\{ \Lambda _{k}^{o} : o = N \, \mathrm{mod} \, 10, k \in \mathcal{B}_{K}^{o} \Big\} .
\end{eqnarray*}
\end{proof}

\section{Proof of Theorem \ref{thm}} \label{sec:proofthm}

We are going to finish the proof of Theorem~\ref{thm}. Recall that Theorem~\ref{thm} is being proven for a fixed function $ f $ of bounded variation with $ f \geq 0 $. We introduce the remaining notation needed for proving the theorem first. Note that some notation was already introduced in Definition \ref{defpeaketc}.

We fix a system
$$ a_{1} < b_{1} < a_{2} < b_{2} < \dots < a_{\sigma } < b_{\sigma } < a_{\sigma + 1} $$
such that
$$ Mf(a_{i}) < Mf(b_{i}) \quad \textrm{and} \quad Mf(a_{i+1}) < Mf(b_{i}) $$
for $ 1 \leq i \leq \sigma $.

\begin{definition} \label{defPEL}
\begin{itemize}
\item The system $ \mathbbm{P} $ consists of all peaks $ \mathbbm{p}_{i} = \{ a_{i} < b_{i} < a_{i+1} \} $ where $ 1 \leq i \leq \sigma $,
\item the system $ \mathbbm{E} $ consists of all essential peaks from $ \mathbbm{P} $,
\item $ L_{0} $ is given by $ 50L_{0} = \max (\{ \omega (b_{i}) : \mathbbm{p}_{i} \in \mathbbm{E} \} \cup \{ 0 \} ) $,
\item $ L_{n} $ is given by $ L_{n} = 2^{-n}L_{0} $ for $ n \in \mathbbm{N} $,
\item the systems $ \mathbbm{E}_{k}^{n}, n \geq 0, k \in \mathbbm{Z}, $ are defined by
$$ \mathbbm{E}_{k}^{n} = \big\{ \mathbbm{p}_{i} \in \mathbbm{E} : 25L_{n} < \omega (b_{i}) \leq 50L_{n} , kL_{n} \leq b_{i} < (k+1)L_{n} \big\} . $$
\end{itemize}
\end{definition}

Our aim is to prove the inequality $ \var \mathbbm{P} \leq C \Var f $. While the proof for the non-essential peaks is easy, the proof for the essential peaks employs all the previously achieved results.

\begin{lemma} \label{lemmnotess}
We have
$$ \var (\mathbbm{P} \setminus \mathbbm{E}) \leq 2 \Var f. $$
\end{lemma}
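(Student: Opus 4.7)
The plan is to convert each non-essential peak into a genuine oscillation of $f$ comparable to its contribution $\var\mathbbm{p}_i$. For a non-essential peak $\mathbbm{p}_i=\{a_i<b_i<a_{i+1}\}$, the very definition supplies a point $x_i\in(a_i,a_{i+1})$ with $f(x_i)>Mf(b_i)-\tfrac{1}{4}\var\mathbbm{p}_i$; on the other hand, for any $\omega>0$ the inequality $\dashint_{a_j-\omega}^{a_j+\omega}f\leq Mf(a_j)$ forces some $y\in(a_j-\omega,a_j+\omega)$ to satisfy $f(y)\leq Mf(a_j)$ (otherwise the integral over this interval would exceed $Mf(a_j)\cdot 2\omega$). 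Together, these three heights (low--high--low) produce, for each non-essential peak, at least $\tfrac{1}{2}\var\mathbbm{p}_i$ of oscillation, which is precisely what is needed for the constant $2$.

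I will index the non-essential peaks by $i_1<\cdots<i_K$ and set $J=\{i_k,\,i_k+1:1\leq k\leq K\}$. For each $k$ I select $x_{i_k}$ as above; for each $j\in J$ I pick $y_j$ with $f(y_j)\leq Mf(a_j)$ inside a small $\omega$-neighbourhood of $a_j$, choosing $\omega$ small enough (the system $\mathbbm{P}$ is finite) that $y_{i_k}<x_{i_k}<y_{i_k+1}$ for every $k$. The single coincidence needing attention occurs when $i_{k+1}=i_k+1$: then both $y_{i_k+1}$ and $y_{i_{k+1}}$ lie in the same neighbourhood of $a_{i_k+1}=a_{i_{k+1}}$, and I will simply identify them. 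This renders the closed intervals $[y_{i_k},y_{i_k+1}]$, $k=1,\ldots,K$, pairwise interior-disjoint.

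The conclusion is a one-line variation estimate. Additivity of $\Var$ over interior-disjoint intervals, combined with the trivial bound $|b-a|+|c-b|\geq 2b-a-c$ applied to each triple $(y_{i_k},x_{i_k},y_{i_k+1})$, gives
$$ \Var f \;\geq\; \sum_{k=1}^{K}\bigl(2f(x_{i_k})-f(y_{i_k})-f(y_{i_k+1})\bigr) \;>\; \sum_{k=1}^{K}\tfrac{1}{2}\var\mathbbm{p}_{i_k} \;=\; \tfrac{1}{2}\var(\mathbbm{P}\setminus\mathbbm{E}), $$
and hence $\var(\mathbbm{P}\setminus\mathbbm{E})\leq 2\Var f$.

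I do not anticipate any serious obstacle. The only delicate point is the geometric arrangement of the auxiliary $y_j$'s so that their associated intervals do not overlap; this is resolved by identifying $y_{i_k+1}$ with $y_{i_{k+1}}$ when the non-essential peaks are adjacent and by choosing $\omega$ smaller than all finitely many gaps in view. The constant $2$ emerges automatically because the non-essentiality threshold $\tfrac{1}{4}\var\mathbbm{p}$ is calibrated to produce $\tfrac{1}{2}\var\mathbbm{p}$ after doubling.
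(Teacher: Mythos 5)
Your proposal is correct and follows essentially the same argument as the paper: a high point $x_i$ inside each non-essential peak from the definition of non-essentiality, low points $y_j$ near the $a_j$'s obtained by averaging against $Mf(a_j)$ (valid under the standing assumption $f\geq 0$), and the oscillation bound $2f(x_i)-f(y_i)-f(y_{i+1})\geq \tfrac{1}{2}\var\mathbbm{p}_i$ summed over an increasing sequence of points. The paper handles your ``adjacent peaks'' coincidence in the same way, by choosing one $y_i$ per point $a_i$ inside pairwise disjoint $\varepsilon$-neighbourhoods avoiding the $x_j$'s.
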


\begin{proof}
For every $ \mathbbm{p}_{i} \in \mathbbm{P} \setminus \mathbbm{E} $, we choose $ x_{i} $ with $ a_{i} < x_{i} < a_{i+1} $ such that
$$ f(x_{i}) \geq Mf(b_{i}) - \frac{1}{4} \var \mathbbm{p}_{i}. $$
We take a small enough $ \varepsilon > 0 $ such that the intervals $ (a_{i} - \varepsilon , a_{i} + \varepsilon ), 1 \leq i \leq \sigma + 1, $ are pairwise disjoint and do not contain any $ x_{j} $. For $ 1 \leq i \leq \sigma + 1 $, we choose $ y_{i} \in (a_{i} - \varepsilon , a_{i} + \varepsilon ) $ so that
$$ f(y_{i}) \leq Mf(a_{i}). $$
For $ \mathbbm{p}_{i} \in \mathbbm{P} \setminus \mathbbm{E} $, we have
\begin{eqnarray*}
|f(x_{i}) - f(y_{i})| + |f(y_{i+1}) - f(x_{i})| \hspace{-5pt} & \geq & \hspace{-5pt} f(x_{i}) - f(y_{i}) + f(x_{i}) - f(y_{i+1}) \\
 & \geq & \hspace{-5pt} 2 \Big[ Mf(b_{i}) - \frac{1}{4} \var \mathbbm{p}_{i} \Big] - Mf(a_{i}) - Mf(a_{i+1}) \\
 & = & \hspace{-5pt} \frac{1}{2} \var \mathbbm{p}_{i},
\end{eqnarray*}
and the lemma follows.
\end{proof}

\begin{lemma} \label{lemmess}
We have
$$ \var \mathbbm{E} \leq 12 \cdot 20000 \Var f. $$
\end{lemma}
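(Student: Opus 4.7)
The plan is to reduce Lemma~\ref{lemmess} to the combination of Lemma~\ref{lemmsuvt} and Lemma~\ref{keylemma} by exploiting the partition of $\mathbbm{E}$ into the groups $\mathbbm{E}_{k}^{n}$ from Definition~\ref{defPEL}. Since each essential peak $\mathbbm{p}_{i}$ satisfies $0 < \omega (b_{i}) \leq 50L_{0}$, there is a unique $n \geq 0$ with $25L_{n} < \omega (b_{i}) \leq 50L_{n}$, and a unique $k \in \mathbbm{Z}$ with $kL_{n} \leq b_{i} < (k+1)L_{n}$. Hence the systems $\mathbbm{E}_{k}^{n}$ partition $\mathbbm{E}$, and consequently
$$ \var \mathbbm{E} = \sum _{n,k} \var \mathbbm{E}_{k}^{n}. $$

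For every pair $(n,k)$ with $\mathbbm{E}_{k}^{n} \neq \varnothing $, I would apply Lemma~\ref{lemmsuvt} to this system of essential peaks with the choice $(x, y) = (kL_{n}, (k+1)L_{n})$, so that $L = L_{n}$. The hypotheses of Lemma~\ref{lemmsuvt} are met: the peaks $\mathbbm{p}_{i} = \{ a_{i} < b_{i} < a_{i+1} \}$ inherit from $\mathbbm{P}$ the chain structure $q_{i} = a_{i+1} = p_{i+1}$, the midpoints $b_{i}$ lie in $[x, y]$, and the scales $\omega (b_{i})$ lie in $(25L, 50L]$ by the definition of $\mathbbm{E}_{k}^{n}$. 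The lemma then produces $s < u < v < t$ with
$$ (k - 50)L_{n} \leq s, \quad t \leq (k + 51)L_{n}, $$
$$ u - s \geq 4L_{n}, \quad v - u \geq L_{n}, \quad t - v \geq 4L_{n}, $$
$$ \min \{ f(s), f(t) \} - \dashint _{u}^{v} f \geq \tfrac{1}{12} \var \mathbbm{E}_{k}^{n}. $$

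These are exactly the hypotheses of Lemma~\ref{keylemma} with $\Lambda _{k}^{n} = \tfrac{1}{12} \var \mathbbm{E}_{k}^{n}$ (and $\Lambda _{k}^{n} = 0$ otherwise). Since the initial chain $a_{1} < b_{1} < \dots < b_{\sigma } < a_{\sigma +1}$ is finite, only finitely many $\Lambda _{k}^{n}$ are positive, so Lemma~\ref{keylemma} applies and yields $\sum _{n,k} \Lambda _{k}^{n} \leq 20000 \Var f$. Multiplying by $12$ and using the partition identity for $\var \mathbbm{E}$ gives the stated bound $\var \mathbbm{E} \leq 12 \cdot 20000 \Var f$.

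The only thing to double-check is the matching of spacing constants and the indexing of $\mathbbm{E}_{k}^{n}$ so that Lemma~\ref{lemmsuvt} is applicable verbatim; there is no genuine obstacle here since $L_{0}$ was chosen precisely so that $\omega (b_{i}) \leq 50L_{0}$ for every essential peak, guaranteeing $n \geq 0$ for every group, which is the index range required by Lemma~\ref{keylemma}.
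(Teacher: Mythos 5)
Your proposal is correct and follows essentially the same route as the paper: define $\Lambda_{k}^{n} = \tfrac{1}{12}\var \mathbbm{E}_{k}^{n}$, apply Lemma~\ref{lemmsuvt} on each nonempty $\mathbbm{E}_{k}^{n}$ with $(x,y)=(kL_{n},(k+1)L_{n})$, and then invoke Lemma~\ref{keylemma}. The constant bookkeeping ($x-50L \leq s$, $t \leq y+50L$ becoming $(k-50)L_{n} \leq s$, $t \leq (k+51)L_{n}$) matches the paper exactly.
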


\begin{proof}
Let us put
$$ \Lambda _{k}^{n} = \frac{1}{12} \var \mathbbm{E}_{k}^{n}, \quad n \geq 0, \; k \in \mathbbm{Z}, $$
and pick an $ (n,k) $ with $ \Lambda _{k}^{n} > 0 $. Clearly, the system $ \mathbbm{E}_{k}^{n} $ is non-empty. Let us consider $ x = kL_{n} $ and $ y = (k+1)L_{n} $. Then Lemma \ref{lemmsuvt} applied on $ \mathbbm{E}_{k}^{n} $ provides a system $ s < u < v < t $ such that
$$ (k - 50)L_{n} \leq s, \quad t \leq (k + 51)L_{n}, $$
$$ u - s \geq 4L_{n}, \quad v - u \geq L_{n}, \quad t - v \geq 4L_{n} $$
and
$$ \min \{ f(s), f(t) \} - \dashint _{u}^{v} f \geq \frac{1}{12} \var \mathbbm{E}_{k}^{n} = \Lambda _{k}^{n}. $$
It follows that the assumption of Lemma \ref{keylemma} is satisfied. We can write
$$ \frac{1}{12} \var \mathbbm{E} = \frac{1}{12} \sum _{n,k} \var \mathbbm{E}_{k}^{n} = \sum _{n,k} \Lambda _{k}^{n} \leq 20000 \Var f. $$
\end{proof}

Once we have these bounds, the proof of Theorem \ref{thm} is easy. Nevertheless, we provide the final argument for completeness.

\begin{proof}[Proof of Theorem \ref{thm}]
Let $ x_{1} < x_{2} < \dots < x_{l} $ be given. We want to show that
$$ \sum _{j=1}^{l-1} \big| Mf(x_{j+1})-Mf(x_{j}) \big| \leq C \Var f. $$
After eliminating unnecessary points and possible repeating of the first and the last point, we obtain a system
$$ b_{0} \leq a_{1} < b_{1} < a_{2} < b_{2} < \dots < a_{\sigma } < b_{\sigma } < a_{\sigma + 1} \leq b_{\sigma + 1} $$
such that
$$ Mf(a_{i}) < Mf(b_{i}) \quad \textrm{and} \quad Mf(a_{i+1}) < Mf(b_{i}) $$
for $ 1 \leq i \leq \sigma $ and
$$ \sum _{i=0}^{\sigma } \big( Mf(b_{i}) - Mf(a_{i+1}) \big) + \sum _{i=1}^{\sigma + 1} \big( Mf(b_{i}) - Mf(a_{i}) \big) = \sum _{j=1}^{l-1} \big| Mf(x_{j+1})-Mf(x_{j}) \big| . $$
Considering the notation from Definition \ref{defPEL}, the left side of this equality can be written as
$$ Mf(b_{0}) - Mf(a_{1}) + Mf(b_{\sigma + 1}) - Mf(a_{\sigma + 1}) + \var \mathbbm{P}. $$
We have $ Mf(b_{0}) - Mf(a_{1}) \leq \sup f - \inf f \leq \Var f $. Similarly, $ Mf(b_{\sigma + 1}) - Mf(a_{\sigma + 1}) \leq \Var f $. It follows now from Lemma \ref{lemmnotess} and Lemma \ref{lemmess} that
$$ \sum _{j=1}^{l-1} \big| Mf(x_{j+1})-Mf(x_{j}) \big| \leq (1 + 1 + 2 + 12 \cdot 20000) \Var f, $$
and the proof of the theorem is completed!
\end{proof}

\begin{remark} \label{remloc}
The proof of Theorem \ref{thm} works also for the local Hardy-Littlewood maximal function. More precisely, if $ \Omega \subset \mathbbm{R} $ is open and $ d : \Omega \rightarrow (0, \infty ) $ is Lipschitz with the constant $ 1 $ such that $ d(x) \leq \mathrm{dist}(x, \mathbbm{R} \setminus \Omega ) $, then the function
$$ M_{\leq d} f(x) = \sup _{0 < \omega \leq d(x)} \; \dashint _{x-\omega}^{x+\omega} |f| $$
fulfills $ \Var _{\Omega } M_{\leq d} f \leq C \Var _{\Omega } f $. Here, by $ \Var _{\Omega } $ we mean $ \sum _{n} \Var _{I_{n}} $ where $ \Omega = \bigcup _{n} I_{n} $ is a decomposition of $ \Omega $ into open intervals. The inequality $ \Var _{I_{n}} M_{\leq d} f \leq C \Var _{I_{n}} f $ can be proven in the same way as Theorem \ref{thm}. It is sufficient just to modify appropriately the formula for $ \omega (r) $ in Definition \ref{defpeaketc}.

The version of Corollary \ref{corw11} for $ M_{\leq d} f $ can be proven as well. If $ f \in W^{1,1}_{\mathrm{loc}}(\Omega ) $ and $ f' \in L^{1}(\Omega ) $, then $ M_{\leq d} f $ is weakly differentiable and
$$ \Vert (M_{\leq d} f)' \Vert _{1, \Omega } \leq C \Vert f' \Vert _{1, \Omega }. $$
\end{remark}

\section{Proof of Corollaries \ref{corlocac} and \ref{corw11}} \label{sec:proofcor}

In this section, we follow methods from \cite{aldperlaz1} and \cite{tanaka}. We recall that a function $ f : A \subset \mathbbm{R} \rightarrow \mathbbm{R} $ is said to have \emph{Lusin's property~$ (N) $} (or is called an \emph{$ N $-function}) on $ A $ if, for every set $ N \subset A $ of measure zero, $ f(N) $ is also of measure zero. The well-known Banach-Zarecki theorem states that $ f : [a,b] \rightarrow \mathbbm{R} $ is absolutely continuous if and only if it is a continuous $ N $-function of bounded variation.

\begin{lemma} \label{lemmlipsch}
Let $ f : \mathbbm{R} \rightarrow \mathbbm{R} $ be a measurable function with $ Mf \not\equiv \infty $ and let $ r > 0 $. Then the function
$$ M_{\geq r} f(x) = \sup _{\omega \geq r} \; \dashint _{x-\omega}^{x+\omega} |f| $$
is locally Lipschitz. In particular, $ M_{\geq r} f $ is a continuous $ N $-function.
\end{lemma}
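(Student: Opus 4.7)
The plan is to prove the lemma in two short steps: first establish that $M_{\geq r}f$ is locally bounded, then upgrade this to a local Lipschitz estimate via a simple radius-comparison trick. Continuity and Lusin's property $(N)$ will then follow automatically from local Lipschitz continuity.

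For local boundedness, fix some $x_{0}$ with $Mf(x_{0}) < \infty$; such an $x_{0}$ exists by the hypothesis $Mf \not\equiv \infty$. Unfolding the definition of $Mf(x_{0})$ gives $\int_{x_{0}-R}^{x_{0}+R}|f| \leq 2R\, Mf(x_{0})$ for every $R>0$ (which incidentally secures $|f| \in L^{1}_{\mathrm{loc}}$). For any compact interval $[a,b]$, any $x \in [a,b]$, and any $\omega \geq r$, the interval $[x-\omega,x+\omega]$ sits inside $[x_{0}-R,x_{0}+R]$ with $R = \omega + D$ and $D = \max\{|a-x_{0}|,|b-x_{0}|\}$, so
$$ \dashint_{x-\omega}^{x+\omega} |f| \leq \frac{\omega+D}{\omega}\, Mf(x_{0}) \leq \Big(1+\frac{D}{r}\Big) Mf(x_{0}). $$
Taking the supremum over $\omega \geq r$ yields a uniform bound $M_{\geq r}f \leq C_{[a,b]}$ on $[a,b]$.

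For the Lipschitz estimate, take $x < y$ in $[a,b]$ and $\omega \geq r$, and set $\omega' = \omega + (y-x)$. A direct computation shows $x - \omega' = 2x - y - \omega \leq y - \omega$ and $x + \omega' = y + \omega$, so $[y-\omega,y+\omega] \subset [x-\omega',x+\omega']$. Consequently
$$ \dashint_{y-\omega}^{y+\omega} |f| \leq \frac{\omega'}{\omega} \dashint_{x-\omega'}^{x+\omega'} |f| \leq \Big(1+\frac{y-x}{r}\Big) M_{\geq r}f(x). $$
Taking the supremum over $\omega \geq r$ gives $M_{\geq r}f(y) - M_{\geq r}f(x) \leq \frac{y-x}{r}\, M_{\geq r}f(x)$, and the symmetric bound holds by exchanging $x$ and $y$. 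Combining with the local boundedness from the first step, I obtain
$$ |M_{\geq r}f(y) - M_{\geq r}f(x)| \leq \frac{C_{[a,b]}}{r}\, |y-x|, \qquad x,y \in [a,b], $$
so $M_{\geq r}f$ is Lipschitz on every compact interval, i.e.\ locally Lipschitz. A locally Lipschitz function is continuous and sends null sets to null sets, giving the ``in particular'' claim for free.

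The only non-routine point is spotting the inclusion $[y-\omega,y+\omega] \subset [x-\omega',x+\omega']$ with $\omega' = \omega + (y-x)$, which absorbs the shift from $x$ to $y$ into a slightly larger radius at a cost linear in $y-x$. Beyond that, the argument is mechanical, and I do not foresee any real obstacle.
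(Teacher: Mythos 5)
Your proof is correct and uses essentially the same idea as the paper: comparing the average over an interval centered at one point with the average over an enlarged interval (radius increased by $|y-x|\leq$ something linear) centered at the nearby point, which costs a multiplicative factor $1+\frac{|y-x|}{r}$ controlled by $\omega\geq r$, and then combining this with local boundedness to get a local Lipschitz constant of the form $B/r$. The only cosmetic difference is that the paper states the comparison as a lower bound for $M_{\geq r}f(y)$ and derives local boundedness from that same claim, while you prove the mirrored upper bound and get boundedness directly from a point where $Mf$ is finite.
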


We prove a claim first.

\begin{claim} \label{cllipsch}
For $ x, y \in \mathbbm{R} $, we have
$$ M_{\geq r} f(y) \geq M_{\geq r} f(x) - \frac{M_{\geq r} f(x)}{r}|y-x|. $$
\end{claim}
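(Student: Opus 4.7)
The plan is to prove the claim by comparing balls centered at $x$ and $y$. The natural move is: given any admissible radius $\omega \geq r$ for the supremum defining $M_{\geq r}f(x)$, the ball $(x-\omega, x+\omega)$ is contained in the larger ball $(y-\omega', y+\omega')$ with $\omega' := \omega + |y-x|$. Since $\omega' \geq \omega \geq r$, this enlarged radius is admissible at $y$, and from $\int_{y-\omega'}^{y+\omega'} |f| \geq \int_{x-\omega}^{x+\omega}|f|$ we get
$$ \dashint_{y-\omega'}^{y+\omega'} |f| \;\geq\; \frac{\omega}{\omega'}\dashint_{x-\omega}^{x+\omega}|f| \;=\; \frac{\omega}{\omega+|y-x|}\dashint_{x-\omega}^{x+\omega}|f|. $$

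Next I would use the elementary estimate $\frac{\omega}{\omega+|y-x|} \geq 1 - \frac{|y-x|}{\omega}$, which follows from $(\omega - |y-x|)(\omega+|y-x|) \leq \omega^{2}$ when $\omega \geq |y-x|$ and is trivial otherwise. Combined with $\omega \geq r$, this yields
$$ M_{\geq r}f(y) \;\geq\; \left(1 - \frac{|y-x|}{r}\right)\dashint_{x-\omega}^{x+\omega}|f|, $$
provided $|y-x| \leq r$ (so the factor is non-negative). Passing to the supremum over admissible $\omega$ on the right-hand side gives
$$ M_{\geq r}f(y) \;\geq\; \left(1 - \frac{|y-x|}{r}\right) M_{\geq r}f(x) \;=\; M_{\geq r}f(x) - \frac{M_{\geq r}f(x)}{r}|y-x|, $$
which is the desired inequality.

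The remaining case $|y-x| > r$ is handled separately: there the right-hand side of the claim is non-positive, while $M_{\geq r}f(y) \geq 0$ trivially. No obstacles arise; the only subtlety is remembering that the factor $1 - |y-x|/r$ must be kept non-negative before taking the supremum over $\omega$, which is why the two regimes $|y-x| \leq r$ and $|y-x| > r$ are treated separately.
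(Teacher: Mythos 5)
Your proof is correct and takes essentially the same route as the paper: your enlarged ball $(y-\omega',y+\omega')$ with $\omega'=\omega+|y-x|$ is literally the paper's interval $(x-\omega,\,2y-(x-\omega))$ (for $y>x$), and the key estimate $\frac{\omega}{\omega+|y-x|}\geq 1-\frac{|y-x|}{r}$ is the same comparison using $\omega\geq r$. The only cosmetic difference is that you pass to the supremum over $\omega$ with a case split on $|y-x|\leq r$ versus $|y-x|>r$, while the paper uses an $\varepsilon$-near-maximizer and subtracts the error term directly, which avoids any case distinction.
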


\begin{proof}
Due to the symmetry, we may assume that $ y > x $. Let $ \varepsilon > 0 $. There is an $ \omega \geq r $ for which
$$ \dashint _{x-\omega}^{x+\omega} |f| \geq M_{\geq r} f(x) - \varepsilon . $$
We can compute
\begin{eqnarray*}
M_{\geq r} f(y) \hspace{-6pt} & \geq \hspace{-6pt} & \dashint _{x-\omega}^{2y -(x-\omega)} |f| = \frac{1}{2(y-x+\omega)} \int _{x-\omega}^{2y -(x-\omega)} |f| \\
 & \geq \hspace{-6pt} & \frac{1}{2(y-x+\omega)} \int _{x-\omega}^{x+\omega} |f| \geq \frac{2\omega}{2(y-x+\omega)} \big( M_{\geq r} f(x) - \varepsilon \big) \\
 & \geq \hspace{-6pt} & M_{\geq r} f(x) - \varepsilon - \frac{y-x}{y-x+\omega} M_{\geq r} f(x) \geq M_{\geq r} f(x) - \varepsilon - \frac{y-x}{r} M_{\geq r} f(x).
\end{eqnarray*}
As $ \varepsilon > 0 $ could be chosen arbitrarily, the claim is proven.
\end{proof}

\begin{proof}[Proof of Lemma \ref{lemmlipsch}]
We realize first that $ M_{\geq r} f $ is locally bounded. If $ y \in \mathbbm{R} $, then $ M_{\geq r} f $ is bounded on a neighbourhood of $ y $ by Claim \ref{cllipsch}, as
$$ |y-x| < r \quad \Rightarrow \quad M_{\geq r} f(x) \leq \frac{r}{r-|y-x|} M_{\geq r} f(y). $$

Now, let $ I $ be a bounded interval. There is a $ B > 0 $ such that $ M_{\geq r} f(x) \leq B $ for $ x \in I $. Using Claim \ref{cllipsch} again, we obtain, for $ x, y \in I $,
$$ M_{\geq r} f(y) \geq M_{\geq r} f(x) - \frac{B}{r}|y-x|. $$
Hence, $ M_{\geq r} f $ is Lipschitz with the constant $ B/r $ on $ I $.
\end{proof}

\begin{lemma} \label{lemmcont}
Let $ f : \mathbbm{R} \rightarrow \mathbbm{R} $ be a measurable function with $ Mf \not\equiv \infty $ and let $ x \in \mathbbm{R} $. If $ f $ is continuous at $ x $, then $ Mf $ is continuous at $ x $, too.
\end{lemma}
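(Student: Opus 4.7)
The plan is to split the supremum defining $ Mf $ into a small-radius part and a large-radius part, and control each separately: the small-radius part is uniformly close to $ |f(x)| $ by continuity, while the large-radius part is continuous (even locally Lipschitz) by Lemma~\ref{lemmlipsch}.

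Fix $ \varepsilon > 0 $. By continuity of $ f $ at $ x $, choose $ \delta > 0 $ so that $ \big| |f(z)| - |f(x)| \big| < \varepsilon $ for every $ z \in (x-\delta, x+\delta) $, and set $ r_{0} = \delta /2 $. For any $ y $ with $ |y-x| < r_{0} $ and any $ \omega \in (0, r_{0}] $, the interval $ (y-\omega, y+\omega) $ lies inside $ (x-\delta, x+\delta) $, so
$$ |f(x)| - \varepsilon \leq \dashint _{y-\omega}^{y+\omega} |f| \leq |f(x)| + \varepsilon . $$
In particular, denoting the sup of such averages by $ M_{\leq r_{0}} f(y) $, we get $ M_{\leq r_{0}} f(y) \leq |f(x)| + \varepsilon $; moreover, letting $ \omega \searrow 0 $ at $ y = x $ gives $ |f(x)| \leq Mf(x) $, and for general $ y $ near $ x $ we have $ M_{\leq r_{0}} f(y) \geq |f(x)| - \varepsilon $. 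By Lemma~\ref{lemmlipsch}, $ M_{\geq r_{0}} f $ is locally Lipschitz, so there is a neighbourhood of $ x $ on which $ |M_{\geq r_{0}} f(y) - M_{\geq r_{0}} f(x)| < \varepsilon $.

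Now write $ Mf(y) = \max \{ M_{\leq r_{0}} f(y), M_{\geq r_{0}} f(y) \} $ and consider two cases. If $ Mf(x) = M_{\geq r_{0}} f(x) $, then for $ y $ close enough to $ x $,
$$ Mf(y) \geq M_{\geq r_{0}} f(y) \geq Mf(x) - \varepsilon , $$
while
$$ Mf(y) \leq \max \{ |f(x)| + \varepsilon , \, M_{\geq r_{0}} f(x) + \varepsilon \} \leq Mf(x) + \varepsilon , $$
using $ |f(x)| \leq Mf(x) $. If instead $ Mf(x) = M_{\leq r_{0}} f(x) > M_{\geq r_{0}} f(x) $, then $ Mf(x) \leq |f(x)| + \varepsilon $, hence $ |f(x)| \geq Mf(x) - \varepsilon $, and we deduce
$$ Mf(y) \geq M_{\leq r_{0}} f(y) \geq |f(x)| - \varepsilon \geq Mf(x) - 2\varepsilon , $$
together with the upper bound $ Mf(y) \leq Mf(x) + \varepsilon $ exactly as before. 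Either way $ |Mf(y) - Mf(x)| \leq 2\varepsilon $ in a neighbourhood of $ x $.

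The only subtle point is that $ Mf(x) $ may exceed $ |f(x)| $ by the contribution from large radii, so one cannot directly compare $ M_{\leq r_{0}} f $ and $ Mf $ pointwise; the case split above handles this. Everything else is a routine use of continuity at $ x $ together with the already established Lipschitz bound on the large-radius piece, so no further technicalities are expected.
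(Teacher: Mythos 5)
Your proof is correct, and it rests on the same two ingredients as the paper's: continuity of $f$ at $x$ controls all sufficiently small radii, while Lemma~\ref{lemmlipsch} controls the radii bounded below. The difference is in the execution. The paper proves lower semicontinuity of $Mf$ (a general fact under $Mf\not\equiv\infty$) and then rules out a failure of upper semicontinuity by contradiction: along a sequence $x_k\to x$ with $Mf(x_k)\geq c>Mf(x)$ it extracts radii $\omega_k$ and splits into the cases $\omega_k\to 0$ (contradicting continuity of $f$ at $x$) and $\limsup\omega_k>r>0$ (contradicting continuity of $M_{\geq r}f$). You instead write $Mf=\max\{M_{\leq r_0}f,\,M_{\geq r_0}f\}$ with a cutoff $r_0$ chosen from the $\varepsilon$--$\delta$ of continuity at $x$, and prove a direct two-sided estimate $|Mf(y)-Mf(x)|\leq 2\varepsilon$ near $x$; the case split on whether $Mf(x)$ is realized by the small- or large-radius part correctly handles the only delicate point, namely that $Mf(x)$ may exceed $|f(x)|$. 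What your route buys is that it is quantitative and self-contained at the point $x$: you never need to invoke lower semicontinuity of $Mf$ as a separate fact, and you even get finiteness of $Mf(x)$ for free from the two bounds. What the paper's sequential argument buys is brevity and no bookkeeping with an $\varepsilon$-dependent cutoff. Both are valid; yours is a sound, slightly more explicit variant.
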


\begin{proof}
The assumption $ Mf \not\equiv \infty $ is sufficient for $ Mf $ to be lower semicontinuous. Assume that $ Mf $ is not upper semicontinuous at $ x $. There is a sequence $ x_{k} $ converging to $ x $ such that $ \inf _{k \in \mathbbm{N}} Mf(x_{k}) > Mf(x) $. We choose $ c $ so that
$$ \inf _{k \in \mathbbm{N}} Mf(x_{k}) > c > Mf(x). $$
For each $ k \in \mathbbm{N} $, we choose $ \omega _{k} > 0 $ such that
$$ \dashint _{x_{k}-\omega_{k}}^{x_{k}+\omega_{k}} |f| \geq c, \quad k = 1, 2, \dots \; . $$
Now,
\begin{itemize}
\item the possibility $ \omega _{k} \rightarrow 0 $ contradicts the continuity of $ f $ at $ x $, since then $ \limsup _{y \rightarrow x} |f(y)| \geq c > Mf(x) \geq \liminf _{y \rightarrow x} |f(y)| $,
\item the possibility $ \limsup _{k \rightarrow \infty } \omega _{k} > r > 0 $ contradicts the continuity of the function $ M_{\geq r} f $ from Lemma \ref{lemmlipsch}, since then $ \limsup _{k \rightarrow \infty } M_{\geq r} f(x_{k}) \geq c > Mf(x) \geq M_{\geq r} f(x) $.
\end{itemize}
\end{proof}

\begin{lemma} \label{lemmluzin}
Let $ f : \mathbbm{R} \rightarrow \mathbbm{R} $ be a measurable function with $ Mf \not\equiv \infty $ which is continuous on an open set $ U $. If $ f $ has $ (N) $ on $ U $, then $ Mf $ has also $ (N) $ on $ U $.
\end{lemma}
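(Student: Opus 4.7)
The plan is to mimic the argument from \cite{aldperlaz1} and \cite{tanaka}, splitting $ N \subset U $ of measure zero into two pieces according to whether the supremum defining $ Mf $ is achieved ``at radius zero'' or at a positive radius, and handling each piece separately. Concretely, I would write $ N = N_{0} \cup N_{+} $ where
\[ N_{0} = \{ x \in N : Mf(x) = |f(x)| \} \quad \textrm{and} \quad N_{+} = \{ x \in N : Mf(x) > |f(x)| \} . \]

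To handle $ N_{0} $, I would first note that $ |f| $ inherits Lusin's property $ (N) $ from $ f $ on $ U $, since $ |f|(E) \subset f(E) \cup (-f(E)) $ for any $ E \subset U $. Hence $ Mf(N_{0}) = |f|(N_{0}) $ has measure zero.

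To handle $ N_{+} $, the key observation is that, for each $ x \in N_{+} $, continuity of $ f $ at $ x $ forces $ \lim _{\omega \searrow 0} \dashint _{x-\omega }^{x+\omega } |f| = |f(x)| < Mf(x) $, so the supremum defining $ Mf(x) $ cannot be approached as $ \omega \to 0 $. I would use this to decompose
\[ N_{+} = \bigcup _{n \in \mathbbm{N}} N_{+}^{n}, \quad \textrm{where } N_{+}^{n} = \{ x \in N_{+} : Mf(x) = M_{\geq 1/n} f(x) \} . \]
Indeed, if $ x \in N_{+} $ did not belong to any $ N_{+}^{n} $, then $ M_{\geq 1/n} f(x) < Mf(x) $ for every $ n $, which would mean $ Mf(x) $ is approached only by averages of radius tending to $ 0 $, contradicting the above. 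By Lemma \ref{lemmlipsch}, $ M_{\geq 1/n} f $ is locally Lipschitz on $ \mathbbm{R} $, hence (being absolutely continuous on each bounded interval) has property $ (N) $. Therefore $ Mf(N_{+}^{n}) = M_{\geq 1/n} f(N_{+}^{n}) $ has measure zero for each $ n $, and so does the countable union $ Mf(N_{+}) $.

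The argument is essentially routine once the decomposition is set up, so I do not expect a serious obstacle; the only point requiring care is the identification $ N_{+} = \bigcup _{n} N_{+}^{n} $, which crucially uses the hypothesis that $ f $ is continuous on $ U $ (and not merely that $ f $ has property $ (N) $ there). Combining the two cases, $ Mf(N) = Mf(N_{0}) \cup Mf(N_{+}) $ is a null set, which is the required conclusion.
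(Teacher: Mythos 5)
Your proof is correct and follows essentially the same route as the paper: split $N$ according to whether $Mf(x)=|f(x)|$ or $Mf(x)>|f(x)|$, use continuity of $f$ on $U$ to cover the latter set by countably many sets on which $Mf$ coincides with some $M_{\geq 1/n}f$, and invoke Lemma \ref{lemmlipsch} (locally Lipschitz, hence an $N$-function) together with property $(N)$ of $|f|$. The only cosmetic difference is that the paper defines its exceptional sets $E_{1/k}$ via the local supremum $\sup_{|y-x|<1/k}|f(y)|$ rather than via the equality $Mf=M_{\geq 1/n}f$, but the argument is the same.
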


\begin{proof}
Note that the set $ E = \{ x \in U : Mf(x) > |f(x)| \} $ fulfills $ E = \bigcup _{k=1}^{\infty } E_{1/k} $ where
$$ E_{r} = \Big\{ x \in U : Mf(x) > \sup _{|y-x|<r} |f(y)| \Big\} , \quad r > 0. $$
For $ x \in E_{r} $, we have $ Mf(x) = M_{\geq r} f(x) $ where $ M_{\geq r} f $ is as in Lemma \ref{lemmlipsch}. At the same time, for $ x \in U \setminus E $, we have $ Mf(x) = |f(x)| $. Hence,
\begin{eqnarray*}
|Mf(N)| & \leq & |Mf(N \setminus E)| + \sum _{k=1}^{\infty } |Mf(N \cap E_{1/k})| \\
 & \leq & |f(N \setminus E)| + \sum _{k=1}^{\infty } |M_{\geq 1/k}f(N \cap E_{1/k})| = 0
\end{eqnarray*}
for every null set $ N \subset U $.
\end{proof}

\begin{proof}[Proof of Corollary \ref{corlocac}]
By Lemma \ref{lemmcont}, $ Mf $ is continuous on $ U $. By Lemma \ref{lemmluzin}, $ Mf $ has $ (N) $ on $ U $. So, it is sufficient to show that $ Mf $ has bounded variation on a given $ [a,b] \subset U $ because then the Banach-Zarecki theorem can be applied to prove that $ Mf $ is absolutely continuous on $ [a,b] $.

Let $ r > 0 $ be chosen so that $ [a-r,b+r] \subset U $ and let $ g : \mathbbm{R} \rightarrow \mathbbm{R} $ be defined by
$$ g(x) = \left\{\begin{array}{ll} f(x), & \quad x \in [a-r,b+r], \\
0, & \quad x \notin [a-r,b+r]. \\
\end{array} \right. $$
Then $ g $ has bounded variation, as $ f $ is absolutely continuous on $ [a-r,b+r] $. By Theorem \ref{thm}, $ Mg $ has bounded variation. It remains to realize that
$$ Mf(x) = \max \{ Mg(x), M_{\geq r} f(x) \} , \quad x \in [a,b], $$
for the function $ M_{\geq r} f $ from Lemma \ref{lemmlipsch}.
\end{proof}

\begin{proof}[Proof of Corollary \ref{corw11}]
Assume that $ f \in W^{1,1}_{\mathrm{loc}}(\mathbbm{R}) $ is such that $ f' \in L^{1}(\mathbbm{R}) $. Then $ f $ is represented by a locally absolutely continuous function with variation $ \Vert f' \Vert _{1} $ (which will be also denoted by $ f $). By Corollary \ref{corlocac}, $ Mf $ is locally absolutely continuous, and thus weakly differentiable. Using Theorem \ref{thm}, we can write
$$ \Vert (Mf)' \Vert _{1} = \Var Mf \leq C \Var f = C \Vert f' \Vert _{1}. $$
\end{proof}

\section*{Acknowledgment}

The author is grateful to Jan Mal\'y for suggesting the problem.

\end{document}